\theoremstyle{plain}
\newtheorem{theorem}{Theorem}[section]
\newtheorem*{theorem*}{Theorem \ref{thm:appl}}
\newtheorem{proposition}[theorem]{Proposition}
\newtheorem*{conjecture}{Conjecture}
\newtheorem{corollary}[theorem]{Corollary}
\newtheorem{lemma}[theorem]{Lemma}
\theoremstyle{definition}
\newtheorem{remark}[theorem]{Remark}
\providecommand{\customgenericname}{}
\newcommand{\newcustomtheorem}[2]{%
	\newenvironment{#1}[1]
	{%
		\renewcommand\customgenericname{#2}%
		\renewcommand\theinnercustomgeneric{##1}%
		\innercustomgeneric
	}
	{\endinnercustomgeneric}
}
\numberwithin{equation}{section}
\DeclareMathOperator{\trace}{trace}
\DeclareMathOperator{\grad}{grad}
\DeclareMathOperator{\id}{Id}
\title{Gap results for biharmonic submanifolds in spheres}
\author{\c Stefan Andronic}
\author{Simona Nistor}
\subjclass[2020]{Primary 53C42. Secondary 53B25.}
\keywords{biharmonic submanifolds, $\lambda$-biharmonic submanifolds, mean curvature}
\begin{document}
	\begin{abstract}
		In this paper we determine a larger gap of the mean curvature for a class of proper biharmonic submanifolds with parallel mean curvature vector field in Euclidean spheres. When the bounds of the gap are reached, we obtain splitting results of the submanifold.
	\end{abstract}
	
	\maketitle
	
	\section{Introduction}
	
	Biharmonic maps between Riemannian manifolds represent a natural generalization of the well-known harmonic maps. They are characterized by the vanishing of the bitension field
	\begin{equation}\label{eq:Tau2Immersions}
		\tau_2(\varphi) = - \Delta ^\varphi \tau (\varphi) - \trace R^N \left ( d\varphi (\cdot), \tau (\varphi) \right ) d\varphi (\cdot),
	\end{equation}
	see \cite{Jiang1986-1} and \cite{Jiang1986-2}, where $\tau(\varphi)$ is the tension field of the smooth map $\varphi : M^m \to N^n$, $\Delta^\varphi$ is the rough Laplacian acting on the sections of the pull-back bundle $\varphi^{-1} (TN)$, $R^N$ is the curvature vector field of the target manifold and the trace is done with respect to the domain metric. Since any harmonic map is biharmonic, we are interested in biharmonic maps which are not harmonic, called proper biharmonic. There is a very rich literature in the field of harmonic maps, but we mention only \cite{EellsLemaire1983} and \cite{EellsSampson1964}.
	
	In the particular case of isometric immersions, that is of submanifolds, the biharmonic equation $\tau_2 (\varphi) = 0$ splits in the tangent and normal parts and defines the biharmonic submanifolds. 
	
	In the late 80's, independently, B.Y. Chen introduced the notion of biharmonic submanifolds in Euclidean spaces $\mathbb R^n$. If we consider the case when the target manifold is an Euclidean space in \eqref{eq:Tau2Immersions} we find the definition given by Chen. Also, he formulated the famous conjecture (see \cite{Chen1991})
	\begin{conjecture}
		Any biharmonic submanifold in $\mathbb R^n$ is minimal.
	\end{conjecture}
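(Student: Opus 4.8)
The plan is to reduce the biharmonic system to a tractable differential (in)equality for the mean curvature and then to separate the compact case, where the conjecture is elementary, from the noncompact case, which is where the real difficulty — and in fact the genuine open problem — lies. For an isometric immersion $\varphi:M^m\to\mathbb R^n$ one has $\tau(\varphi)=mH$, and since the target is flat the curvature term in \eqref{eq:Tau2Immersions} vanishes, so $\varphi$ is biharmonic if and only if $\Delta^\varphi H=0$. Decomposing this equation into its components tangent and normal to $M$, the normal part reads $\Delta^\perp H+\trace B(\cdot,A_H\,\cdot)=0$, where $B$ is the second fundamental form, $A$ the shape operator and $\Delta^\perp$ the (nonnegative) rough Laplacian on the normal bundle. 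I would take this normal equation as the analytic engine of the proof.

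The first concrete step is to pair the normal equation with $H$. Using $\langle B(X,Y),H\rangle=\langle A_HX,Y\rangle$ one gets $\langle\Delta^\perp H,H\rangle=-|A_H|^2$, and feeding this into the Weitzenb\"ock identity $\tfrac12\Delta|H|^2=-\langle\Delta^\perp H,H\rangle+|\nabla^\perp H|^2$ yields
\begin{equation*}
	\tfrac12\,\Delta|H|^2=|A_H|^2+|\nabla^\perp H|^2\ge 0,\qquad \Delta=\trace\nabla^2,
\end{equation*}
so that $|H|^2$ is subharmonic. If $M$ is compact this closes the argument at once: integrating over the closed manifold gives $\int_M\Delta|H|^2=0$, which forces the nonnegative integrand to vanish, whence $A_H\equiv0$, and then $m|H|^2=\trace A_H=0$ forces $H\equiv0$. (In fact no compact submanifold of $\mathbb R^n$ is minimal except a point, so the compact case says merely that proper biharmonic compact submanifolds do not exist.)

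The decisive obstacle is the noncompact case, and here subharmonicity of $|H|^2$ no longer suffices, since a complete noncompact manifold carries nonconstant subharmonic functions. My strategy would be to upgrade subharmonicity to a Liouville-type rigidity by imposing control at infinity — for instance completeness together with $|H|\in L^p(M)$ for a suitable $p$, or boundedness of $|H|$ under a lower Ricci bound or a volume-growth hypothesis — and then to invoke the corresponding $L^p$ or bounded Liouville theorem to force $|H|^2$ constant, after which the equality case above again gives $H\equiv0$. I expect precisely this last step to be the true difficulty: absent such analytic control on the behaviour of $|H|$ at infinity, there is no known mechanism preventing a nonminimal biharmonic submanifold from escaping to infinity, which is exactly why the conjecture remains open in full generality and has so far been confirmed only under extra hypotheses (curves, surfaces in $\mathbb R^3$, hypersurfaces with at most two distinct principal curvatures, finite-type or integrability conditions).
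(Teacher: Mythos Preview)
The paper does not prove this statement: it is quoted as Chen's \emph{conjecture}, and the authors explicitly say that ``it is not proven in its full generality''. There is therefore no proof in the paper to compare your attempt against.

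Your write-up is an honest and accurate account of what is actually known. The derivation of the normal equation $\Delta^\perp H+\trace B(\cdot,A_H\cdot)=0$ in $\mathbb R^n$, the subharmonicity inequality $\tfrac12\Delta|H|^2=|A_H|^2+|\nabla^\perp H|^2\ge0$, and the compact case are all correct and standard. You also correctly diagnose the genuine obstruction: in the noncompact setting, subharmonicity alone is far too weak, and one needs some a priori control (integrability of $|H|$, curvature or volume-growth bounds, finite type, restricted number of principal curvatures, etc.) to run a Liouville argument. That is precisely the state of the art reflected in the references the paper cites (\cite{AkutagawaMaeta2013}, \cite{Dimitric1992}, \cite{FuYangZhang2022}, \cite{HasanisVlachos1995}, \cite{MontaldoOniciucRatto2016}).

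So your proposal is not a proof of the conjecture --- and you say so yourself --- but as a summary of the mechanism and the obstacle it is correct. Just be aware that what you have written is a discussion of an open problem, not a proof of the stated result.
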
 
	
	Even though there are partial results for this conjecture, see, for example, \cite{AkutagawaMaeta2013}, \cite{Dimitric1992}, \cite{FuYangZhang2022}, \cite{HasanisVlachos1995}, \cite{MontaldoOniciucRatto2016}, it is not proven in its full generality.
	
	Since in negatively curved ambient spaces there are mostly non-existence results, see, for example, \cite{CaddeoMontaldoOniciuc2002}, \cite{GuanLiVrancken2021}, \cite{Jiang1986-2}, \cite{Maeta2014-1}, \cite{Maeta2014-2}, \cite{Oniciuc2002}, and one exception, see \cite{OuTang2012}, we will consider only the case of positively curved ambient spaces, in particular that of Euclidean spheres.
	
	Even if biharmonic submanifolds are somehow rigid, they still enjoy several interesting features. For example, an important result of biharmonic submanifolds states that a proper biharmonic submanifold with constant mean curvature ($CMC$) in a unit Euclidean sphere $\mathbb S^n$ has the mean curvature bounded from above by $1$, and it reaches the bound if and only if the submanifold lies minimally in the small hypersphere of radius $1 / \sqrt 2$ in $\mathbb S^n$ (see \cite{BalmusMontaldoOniciuc2008} and \cite{OniciucPHD}). However it is not well understood if the range of the mean curvature $|H|$ fills in the entire interval $(0, 1]$. When the mean curvature vector field $H$ is parallel in the normal bundle ($PMC$), it was proved in \cite{BalmusOniciuc2012} that $|H| \in \left ( 0, (m-2) / m \right ] \cup \{1\}$. 
	
	For current status on biharmonic submanifolds we refer to \cite{FetcuOniciuc2022}, \cite{Oniciuc2012HabilitationThesis}, \cite{ChenOu2020}.
	
	In this paper we aim to further enlarge the gap for the range of the mean curvature of $PMC$ biharmonic submanifolds in spheres, imposing additional properties to the submanifold. More precisely, we consider the submanifold $M$ in $\mathbb S^n$ to be the extrinsic product, with respect to some splitting of $\mathbb R^{n+1}$, of two submanifolds.  
	
	The following theorem summarizes the main results of this paper.
	
	\begin{customthm}{\ref{th:SummarizeTheorem}}
		Let $\varphi_1 : M_1 ^{m_1} \to \mathbb S^{n_1} (r_1)$, $m_1 \geq 2$, and $\varphi_2 : M_2^{m_2} \to \mathbb S^{n_2} (r_2)$, $m_2 \geq 2$, be two non-minimal and $PMC$ submanifolds such that $M^m = M_1^{m_1} \times M_2^{m_2}$ is a proper biharmonic in $\mathbb S^n$, $r_1 ^2 + r_2^2 = 1$ and $n_1 + n_2 = n - 1$.
		\begin{enumerate}
			\item If $m_1 > 2$ or $m_2 > 2$, then
			$$
			\left | H^{\iota \circ \varphi} \right | \in \left ( 0, \frac {m-4} m \right ] \cup \{ 1 \}.
			$$
			\item If $m_1 = m_2 = 2$, then $\varphi_1$ and $\varphi_2$ are pseudo-umbilical, that is
			$$
			\left | H^{\iota \circ \varphi} \right | = 1.
			$$
		\end{enumerate}
	\end{customthm}
	When the mean curvature reaches the value $(m - 4) / m$, the submanifold $M$ splits further extrinsically as described in Theorem \ref{th:MainTheorem}. After the new splitting, the submanifold $M$ falls in a known class of examples of proper biharmonic submanifolds in spheres. When the mean curvature reaches the upper bound $1$, we have a codimension reduction result given in Theorem \ref{th:CharacterizationNormHIs1}. 
	
	We note that, using a different technique, the gap $((m-2)/m, 1)$ was also enlarged in \cite{Nistor2023} for the case of hypersurfaces.
	
	We will see that the biharmonic equation for $M$ implies that $M_1$ and $M_2$ satisfy equations of type $\tau_2 (\varphi) = \lambda \tau (\varphi)$, for some real constant $\lambda$. This type of submanifolds are called $\lambda$-biharmonic submanifolds and we first study them. We find an upper bound for the range of $|H|$ for $CMC$ $\lambda$-biharmonic submanifolds in Euclidean spheres (Theorem \ref{th:CMCLambdaBiharmonicImmersion}) and, if we strengthen the hypothesis of $CMC$, we obtain a gap for the range of $|H|$ for $PMC$ $\lambda$-biharmonic submanifolds (Theorem \ref{th:HForPMCLambdaBiharmonicImmersions} and Corollary \ref{th:HForPMCLambdaBihamonicHypersurfaces}). When the lower bound of the gap is reached we have a splitting result and when the upper bound is reached the submanifold is pseudo-umbilical.
		
	\section{Preliminaries}
	
	We recall here the fundamental equations for submanifolds, for example see \cite{ChenBook1973} and \cite{DajczerTojeiroBook2019}. Let $\varphi : M^m \to N^n$ be an isometric immersion, i.e. we consider $M$ a submanifold in a Riemannian manifold $N$. All Riemannian metrics are denoted by the same symbol $\langle \cdot, \cdot \rangle$. In order to simplify the notations, we will use the fact that any immersion is locally an embedding and we will identify locally $M^m$ with its image through $\varphi$. Also, a vector field $X$ tangent to $M$ will be identified with $d \varphi (X)$. We have
	\begin{itemize}
		\item the Gauss equation
		\begin{equation}\label{eq:GaussEquation}
			\langle R^N (X, Y) Z, W \rangle = \langle R(X, Y) Z, W \rangle - \langle B(X, W), B(Y, Z) \rangle + \langle B(Y, W), B(X, Z) \rangle;
		\end{equation}
		\item the Codazzi equation
		\begin{equation}\label{eq:CodazziEquation}
			\left( \nabla A_\eta \right) (X, Y) - \left( \nabla A_\eta \right) (Y, X) = A_{\nabla_X ^\perp \eta} Y - A_{\nabla _Y ^\perp \eta} X - \left( R^N (X, Y) \eta \right) ^\top;
		\end{equation}
		\item the Ricci equation
		\begin{equation}\label{eq:RicciEquation}
			\left( R^N (X, Y) \eta \right)^\perp = R^\perp (X, Y) \eta + B(A_\eta X, Y) - B(X, A_\eta Y),
		\end{equation}
	\end{itemize}
	where $B$ and $A_\eta$ denote the second fundamental form and the shape operator corresponding to the section $\eta$ of the normal bundle.
	
	According to our conventions, the minimal submanifolds are also $CMC$ and $PMC$, i.e. $|H|$ is constant and $\nabla ^\perp H = 0$, respectively. Moreover, pseudo-umbilical submanifolds, i.e. submanifolds with $A_H = |H|^2 \id$, include the minimal ones, where $\id$ is the identity operator.
	
	There are two large classes of examples of biharmonic submanifolds in Euclidean spheres. The first class is given by 
	\begin{proposition}[\cite{CaddeoMontaldoOniciuc2002}]
		Let $M^m$ be a minimal submanifold in $\mathbb S^n (a) \times \{ b\}$, where $a^2 + b^2 = 1$, $0 < a < 1$. Then $M$ is a proper biharmonic submanifold in $\mathbb S^{n+1}$ if and only if $a = 1/\sqrt 2$ and $b = \pm 1/\sqrt 2$.
	\end{proposition}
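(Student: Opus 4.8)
The plan is to reduce the biharmonicity of $M$ in $\mathbb{S}^{n+1}$ to a single algebraic condition on $a$ and $b$ by exploiting the totally umbilical chain $M \subset \mathbb{S}^n(a)\times\{b\} \subset \mathbb{S}^{n+1}$. First I would realize $\mathbb{S}^{n+1}\subset\mathbb{R}^{n+2}=\mathbb{R}^{n+1}\times\mathbb{R}$ and, at a point $p=(x,b)$ with $|x|=a$, introduce the unit field $\eta=\left(-\tfrac{b}{a}\,x,\,a\right)$, which is tangent to $\mathbb{S}^{n+1}$ and normal to the small hypersphere $\mathbb{S}^n(a)\times\{b\}$. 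A direct differentiation shows that this hypersphere is totally umbilical in $\mathbb{S}^{n+1}$ with shape operator $A_\eta=\tfrac{b}{a}\,\id$, and that $\eta$ is parallel in the corresponding normal bundle.

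Next I would transfer this information to $M$. Writing the second fundamental form of $M$ in $\mathbb{S}^{n+1}$ as the sum of the second fundamental form of $M$ in the small sphere and the umbilical term $\tfrac{b}{a}\langle\cdot,\cdot\rangle\eta$, the minimality of $M$ in $\mathbb{S}^n(a)\times\{b\}$ makes the first trace vanish, so that the mean curvature vector of $M$ in $\mathbb{S}^{n+1}$ is $H=\tfrac{b}{a}\eta$. Since $\eta$ is normal to $M$ and parallel, $H$ is parallel in the normal bundle (so $M$ is $PMC$ with $|H|=|b|/a$ constant), and since $\langle B(X,Y),H\rangle=\tfrac{b^2}{a^2}\langle X,Y\rangle$ for all tangent $X,Y$, the submanifold $M$ is pseudo-umbilical, i.e. $A_H=|H|^2\,\id$.

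With these structural facts in hand, I would invoke the characterization of biharmonic submanifolds of $\mathbb{S}^{n+1}$ (the space form of sectional curvature $1$): $M$ is biharmonic if and only if
\[
\Delta^\perp H+\trace B(\cdot,A_H\,\cdot)-mH=0
\quad\text{and}\quad
\tfrac{m}{2}\grad|H|^2+2\,\trace A_{\nabla^\perp_{(\cdot)}H}(\cdot)=0.
\]
Because $H$ is parallel, the tangent equation holds trivially and the normal equation collapses to $\trace B(\cdot,A_H\,\cdot)=mH$. Using pseudo-umbilicity, $\trace B(\cdot,A_H\,\cdot)=|H|^2\,\trace B=m|H|^2H$, so biharmonicity becomes equivalent to $m|H|^2H=mH$.

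Finally, since the submanifold is required to be proper, $H\neq0$ (equivalently $b\neq0$, which is guaranteed by $0<a<1$), so the last identity forces $|H|=1$, that is $|b|/a=1$. Together with $a^2+b^2=1$ this yields $a=1/\sqrt2$ and $b=\pm1/\sqrt2$; conversely, these values give $|H|=1$ and hence satisfy the normal equation, establishing both implications. I expect the only delicate point to be the careful bookkeeping of the two-step second fundamental form and the verification that $\eta$ (and hence $H$) is parallel, after which the conclusion is purely algebraic.
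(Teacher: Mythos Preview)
Your argument is correct. Note, however, that the paper does not supply its own proof of this proposition: it is quoted from \cite{CaddeoMontaldoOniciuc2002} as background in the Preliminaries, so there is no in-paper proof to compare against. That said, your approach is exactly the standard one and is fully consistent with the machinery the paper develops later: it is the $\lambda=0$, $r=1$ instance of the $PMC$ characterization \eqref{eq:PMC1LambdaBiharmonicCHaracterization}, and your conclusion $|H|^2=1$ is the $\lambda=0$ endpoint of Theorem~\ref{th:CMCLambdaBiharmonicImmersion}. The only cosmetic difference is a sign choice for the unit normal $\eta$ (the paper, in its umbilical example, takes $\eta_p=\tfrac{1}{r}\bigl(\tfrac{b}{a}x,-a\bigr)$ and gets $A=-\tfrac{b}{a}\,\id$), which of course does not affect $|H|$ or the final algebraic identity $b^2=a^2$.
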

	In this case, the submanifold $M$ is $PMC$, pseudo-umbilical, $|H| = 1$ and $A_H$ is parallel.
	
	The second class of examples is represented by
	\begin{proposition}[\cite{CaddeoMontaldoOniciuc2002}]
		Let $M_1 ^{m_1}$, $M_2^{m_2}$ be two minimal submanifolds in $\mathbb S^{n_1} (r_1)$ and $\mathbb S^{n_2} (r_2)$, respectively, where $r_1^2 + r_2^2 = 1$ and $n_1 + n_2 = n - 1$. The manifold $M^m = M_1^{m_1} \times M_2^{m_2}$ is a proper biharmonic submanifold in $\mathbb S^{n}$ if and only if $r_1 = r_2 = 1/\sqrt 2$ and $m_1 \neq m_2$.
	\end{proposition}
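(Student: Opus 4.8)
The plan is to view $M=M_1\times M_2$ inside the tower $M\subset\mathbb S^{n_1}(r_1)\times\mathbb S^{n_2}(r_2)\subset\mathbb S^n\subset\mathbb R^{n+1}$ and to collapse the biharmonic system of $M$ in $\mathbb S^n$ to a single algebraic relation between $r_1,r_2,m_1,m_2$. Writing a point of $M$ as $x=(x_1,x_2)$ with $x_i\in\mathbb S^{n_i}(r_i)\subset\mathbb R^{n_i+1}$, the position vector $x$ is the outward unit normal of $\mathbb S^n$ in $\mathbb R^{n+1}$, while $(x_1,0)$ and $(0,x_2)$ span the normal space of the product of spheres in $\mathbb R^{n+1}$. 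First I would isolate the unit field $\xi=\bigl(\tfrac{r_2}{r_1}x_1,-\tfrac{r_1}{r_2}x_2\bigr)$, which is tangent to $\mathbb S^n$ and normal to $\mathbb S^{n_1}(r_1)\times\mathbb S^{n_2}(r_2)$, and record the decompositions $(x_1,0)=r_1^2\,x+r_1r_2\,\xi$ and $(0,x_2)=r_2^2\,x-r_1r_2\,\xi$, which channel all the extrinsic curvature into the single normal direction $\xi$.

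Next I would compute the second fundamental form $B$ and mean curvature $H$ of $M$ in $\mathbb S^n$. Differentiating $\xi$ along $V=(v_1,v_2)\in TM_1\oplus TM_2$ gives $\nabla^{\mathbb R^{n+1}}_V\xi=\bigl(\tfrac{r_2}{r_1}v_1,-\tfrac{r_1}{r_2}v_2\bigr)$, which is tangent to $M$; hence $A_\xi=-\tfrac{r_2}{r_1}\id$ on $TM_1$ and $A_\xi=\tfrac{r_1}{r_2}\id$ on $TM_2$, and $\nabla^\perp\xi=0$, so $\xi$ is parallel in the normal bundle. Feeding the two decompositions into the Gauss formula, $B$ restricted to $TM_1$ reads $B(v_1,w_1)=\bigl(B^{M_1}(v_1,w_1),0\bigr)-\tfrac{r_2}{r_1}\langle v_1,w_1\rangle\,\xi$, and symmetrically on $TM_2$ with the opposite sign $+\tfrac{r_1}{r_2}$, where $B^{M_i}$ is the second fundamental form of $\varphi_i$ in $\mathbb S^{n_i}(r_i)$. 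Since each $\varphi_i$ is minimal, the traces of the $B^{M_i}$ vanish and $H$ has only a $\xi$-component, namely $H=h\,\xi$ with $h=\tfrac{r_1^2m_2-r_2^2m_1}{m\,r_1r_2}$ a constant. In particular $M$ is automatically $CMC$ and $PMC$, so $\Delta^\perp H=0$ and the tangent part $2\trace A_{\nabla^\perp_{(\cdot)}H}(\cdot)+\tfrac m2\grad|H|^2=0$ of the biharmonic system holds identically.

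It then remains to impose the normal part $\Delta^\perp H+\trace B(\cdot,A_H\cdot)-mH=0$. With $A_H=hA_\xi$ and the two factor formulas for $B$, minimality of the $M_i$ again annihilates every $B^{M_i}$ term, and a short trace reduces the equation to $h\bigl(\tfrac{r_2^2}{r_1^2}m_1+\tfrac{r_1^2}{r_2^2}m_2-m\bigr)\xi=0$. A \emph{proper} biharmonic $M$ requires $h\ne0$, i.e.\ $r_1^2m_2\ne r_2^2m_1$, together with the vanishing of the bracket. Putting $t=r_1^2\in(0,1)$ and clearing denominators, the bracket condition becomes the quadratic $2m\,t^2-(3m_1+m_2)t+m_1=0$, whose discriminant is the perfect square $(m_1-m_2)^2$, so its roots are $t=\tfrac{m_1}{m}$ and $t=\tfrac12$. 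The root $t=\tfrac{m_1}{m}$ gives $r_1^2m_2=r_2^2m_1$, hence $h=0$ and a \emph{minimal} generalized Clifford torus, and is therefore excluded. The root $t=\tfrac12$ gives $r_1=r_2=1/\sqrt2$, for which $h=\tfrac{m_2-m_1}{m}$ is nonzero precisely when $m_1\ne m_2$; this proves both implications and fixes the stated values.

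I expect the crux to be the bookkeeping of the second step: cleanly separating the intrinsic forms $B^{M_i}$ (which die by minimality) from the extrinsic $\xi$-terms encoding how $\mathbb S^{n_1}(r_1)\times\mathbb S^{n_2}(r_2)$ sits in $\mathbb S^n$, and verifying that $\xi$ is genuinely parallel so that $\Delta^\perp H=0$ and the entire tangent equation drops out. Once $H=h\,\xi$ with $h$ constant is secured, the problem collapses to the quadratic in $t$, whose factorization through $(m_1-m_2)^2$ makes the final case analysis—and the appearance of the two thresholds $r_1=r_2=1/\sqrt2$ and $m_1\ne m_2$—immediate.
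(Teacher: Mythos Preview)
Your argument is correct. The computation of $\xi$, $A_\xi$, $H=h\xi$, the verification that $H$ is parallel, and the reduction of the $PMC$ biharmonic equation to the scalar identity
\[
h\Bigl(\tfrac{r_2^2}{r_1^2}m_1+\tfrac{r_1^2}{r_2^2}m_2-m\Bigr)=0
\]
are all sound; the quadratic in $t=r_1^2$ and its factorization are exactly right.

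The route, however, differs from the paper's. In the paper this statement is obtained as an immediate corollary of the general biharmonicity system \eqref{eq:biharmonicitySubmanifoldTorusR} for a product $M_1\times M_2$ in $\mathbb S^n$: substituting $\tau(\varphi_1)=\tau(\varphi_2)=0$ kills the two $\tau_2$-equations, and the fourth equation becomes
\[
(r_2^2-r_1^2)\Bigl(\tfrac{m_2}{r_2^2}-\tfrac{m_1}{r_1^2}\Bigr)^2=0,
\]
which is already factored; properness then forces $r_1=r_2=1/\sqrt 2$ and $m_1\neq m_2$ in one line. So the paper trades your explicit second-fundamental-form computation for the general product system, which it needs anyway for the rest of the article. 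Your approach is more self-contained for this single proposition and makes the geometry of the normal direction $\xi$ completely transparent, at the cost of solving a quadratic whose factorization the paper's version gives for free.
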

	In this case, the submanifold $M$ is $PMC$, the shape operator $A_H$ is parallel with two constant distinct principal curvatures, and $|H| = |m_2 - m_1| / m$.
	
	There are also examples of proper biharmonic submanifolds which do not belong to the above two classes. However, with some additional hypotheses, a biharmonic submanifold falls in one of the previous two classes.
	
	An interesting property of biharmonic submanifolds is that their mean curvature is bounded from above. Moreover, if it reaches the upper bound, then the submanifold belongs to one of the two classes from above.	
	
	\begin{proposition}[\cite{OniciucPHD}] \label{th:IntervalH}
		Let $\varphi : M^m \to \mathbb S^n$ be a $CMC$ proper biharmonic immersion. Then $|H| \in (0, 1]$. Moreover, $|H| = 1$ if and only if $M$ is minimal in a hypersphere $\mathbb S^{n-1} \left( 1/\sqrt 2 \right) \subset \mathbb S^{n}$.
	\end{proposition}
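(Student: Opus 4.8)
The plan is to convert the biharmonic condition into a pointwise scalar inequality for $|H|$ and then treat the equality case by geometric means. First I would record the decomposition of $\tau_2(\varphi)=0$ for a $CMC$ submanifold of $\mathbb{S}^n$ into its normal and tangential parts; the normal part reads
\begin{equation*}
\Delta^\perp H + \trace B(\cdot, A_H \cdot) - m H = 0,
\end{equation*}
where $\Delta^\perp$ is the nonnegative rough Laplacian in the normal bundle. Taking the inner product with $H$ and using $\langle B(X,Y), H\rangle = \langle A_H X, Y\rangle$ turns the middle term into $|A_H|^2 = \trace(A_H^2)$. For the first term I would invoke the Weitzenböck identity $\tfrac12\Delta|H|^2 = \langle \Delta^\perp H, H\rangle - |\nabla^\perp H|^2$, whose left-hand side vanishes since $M$ is $CMC$. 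This produces the key identity
\begin{equation*}
|\nabla^\perp H|^2 + |A_H|^2 = m|H|^2.
\end{equation*}

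Next I would bound $|A_H|^2$ from below by Cauchy--Schwarz: since $\trace A_H = \langle \trace B, H\rangle = m|H|^2$, one has $m|A_H|^2 \geq (\trace A_H)^2 = m^2|H|^4$, hence $|A_H|^2 \geq m|H|^4$. Combined with the identity above, and discarding the nonnegative term $|\nabla^\perp H|^2$, this gives $m|H|^2 \geq m|H|^4$, so $|H|^2 \leq 1$. Since $\varphi$ is proper biharmonic and $CMC$, $|H|$ is a positive constant, whence $|H| \in (0,1]$. For the equality case $|H|=1$ I would extract the two saturation conditions: $|\nabla^\perp H|^2 = 0$, so $H$ is parallel ($M$ is $PMC$), and equality in Cauchy--Schwarz, which forces $A_H = |H|^2\,\id = \id$, i.e.\ $M$ is pseudo-umbilical.

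The remaining and, I expect, hardest step is the geometric identification. Viewing $M \subset \mathbb{S}^n \subset \mathbb{R}^{n+1}$ with position vector $x$ and writing $D$ for the flat connection, I would compute $D_X x = X$; and since $H$ is tangent to $\mathbb{S}^n$ and $X \perp H$, the $\mathbb{S}^n$-Weingarten formula applies, giving $D_X H = -A_H X + \nabla^\perp_X H = -X$ by the two conditions above. Hence $D_X(H+x)=0$, so $a := H + x$ is a constant vector with $|a|^2 = |H|^2 + |x|^2 = 2$ and $\langle x, a\rangle = |x|^2 = 1$. Therefore $\langle x, a/|a|\rangle = 1/\sqrt2$ along $M$, which places $M$ inside the small hypersphere $\mathbb{S}^{n-1}(1/\sqrt2) = \{\, y\in\mathbb{S}^n : \langle y, a/|a|\rangle = 1/\sqrt2 \,\}$.

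Finally I would verify minimality in this hypersphere by comparing mean curvature vectors. A direct computation shows that the unit normal of $\mathbb{S}^{n-1}(1/\sqrt2)$ inside $\mathbb{S}^n$ at $x$ equals $a-x = H$, and that this hypersphere is totally umbilical in $\mathbb{S}^n$ with mean curvature vector exactly $H$; since the mean curvature vector of $M$ in $\mathbb{S}^n$ splits orthogonally as the mean curvature of $M$ in the hypersphere plus that of the hypersphere in $\mathbb{S}^n$, the former must vanish, so $M$ is minimal in $\mathbb{S}^{n-1}(1/\sqrt2)$. The converse is immediate from the first of the two cited propositions of Caddeo--Montaldo--Oniciuc. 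The main obstacle is precisely this last geometric step: the scalar inequality is routine once the identity is in hand, but correctly pinning down the hypersphere and checking minimality requires the passage to $\mathbb{R}^{n+1}$ and careful bookkeeping of the three nested second fundamental forms.
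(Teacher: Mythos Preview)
Your proof is correct and follows essentially the same route the paper takes. The paper does not prove this proposition directly (it is cited from \cite{OniciucPHD}), but it proves the $\lambda$-biharmonic generalisation, Theorem~\ref{th:CMCLambdaBiharmonicImmersion}, by exactly your steps: pair the normal biharmonic equation with $H$, apply the Weitzenb\"ock formula using the $CMC$ hypothesis to obtain the identity $|\nabla^\perp H|^2 + |A_H|^2 = m|H|^2$, and then use Cauchy--Schwarz on the eigenvalues of $A_H$ to get $|H|\le 1$, with equality forcing $M$ to be $PMC$ and pseudo-umbilical.

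The only difference lies in the equality case. The paper simply invokes the Chen--Yano result \cite{ChenYano1972} (non-minimal, $PMC$, pseudo-umbilical $\Rightarrow$ minimal in a small hypersphere of the appropriate radius), whereas you carry out that argument by hand: showing $a=H+x$ is constant in $\mathbb{R}^{n+1}$, using $\langle x,a\rangle=1$ to place $M$ in $\mathbb{S}^{n-1}(1/\sqrt2)$, and then checking minimality via the umbilical splitting of mean curvature vectors. Your explicit computation is precisely what underlies Remark~2.4 (the constant vector $\overline n=(H+\overline r)/2$), so the two approaches coincide in substance; yours is just self-contained rather than by citation.
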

	
	\begin{remark}
		When $|H|=1$, $\varphi (M) \subset \Pi$, where $\Pi$ is the hyperplane of $\mathbb R^{n+1}$ given by 
		$$
		\Pi : \langle \overline n, \overline r \rangle = \frac 1 2,
		$$
		where $\overline n = (H + \overline r) / 2$ is a constant vector field and $\overline r$ is the position vector field in $\mathbb R^{n+1}$. We note that $\overline r = \iota \circ \varphi$, where $\iota$ is the canonical inclusion of $\mathbb S^n$ in $\mathbb R^{n+1}$ (see \cite{ChenYano1972}).
	\end{remark}
	
	As a consequence of Proposition \ref{th:IntervalH} we have the following rigid result.
	
	\begin{proposition}[\cite{BalmusOniciuc2012}]\label{th:ProperBiharmonicSubmanifoldsInSpheresGeneral}
		Let $\varphi : {M^m} \to {\mathbb S^{n-1}(a)}$ be a submanifold in a small hypersphere $\mathbb S^{n-1} (a)$ in $\mathbb S^{n}$, where $a^2 + b^2 =1$, $a \in (0,1)$. Then $M$ is proper biharmonic in $\mathbb S^{n}$ if and only if either $a = 1/\sqrt 2$ and $M$ is minimal in $\mathbb S^{n-1}\left( 1/\sqrt 2 \right)$, or $ a > 1/\sqrt 2$ and $M$ is minimal in a small hypersphere $\mathbb S^{n-2}\left( 1/\sqrt 2 \right)$ of $\mathbb S^{n-1} (a)$. In both cases, $|H| = 1$ and in the latter we have a codimension reduction result, that is, $M$ lies in some totally geodesic hypersphere $\mathbb S^{n-2} (1 / \sqrt 2) \subset \mathbb S^{n-1} \subset \mathbb S^n$.
	\end{proposition}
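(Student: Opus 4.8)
The plan is to realize $\varphi$ as the first factor of the composition $M^m \xrightarrow{\varphi} \mathbb{S}^{n-1}(a) \xrightarrow{j} \mathbb{S}^n$, where $j$ is the inclusion of the small hypersphere, and to transfer the biharmonicity of $j\circ\varphi$ in $\mathbb{S}^n$ into conditions on $\varphi$ inside $\mathbb{S}^{n-1}(a)$. The geometric input is that $j$ is totally umbilical: writing $\xi$ for the unit normal of $\mathbb{S}^{n-1}(a)$ in $\mathbb{S}^n$, its shape operator is $(b/a)\id$, whence
\[
B^{j\circ\varphi}(X,Y) = B^\varphi(X,Y) + \tfrac{b}{a}\langle X, Y\rangle \xi, \qquad H^{j\circ\varphi} = H^\varphi + \tfrac{b}{a}\xi, \qquad A_\xi = \tfrac{b}{a}\id.
\]
A short Weingarten computation shows that $\xi$ is parallel in the normal bundle of $M$ in $\mathbb{S}^n$ and that $\nabla^\perp$ preserves the subbundle of hypersphere-normal directions; in particular $A_{H^{j\circ\varphi}} = A_{H^\varphi} + (b^2/a^2)\id$, and $\Delta^\perp\big(H^\varphi + (b/a)\xi\big)$ has no component along $\xi$.

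Next I would substitute these relations into the biharmonic system for an immersion into $\mathbb{S}^n$, whose normal part reads $\Delta^\perp H + \trace B(\cdot, A_H\cdot) - mH = 0$, and split the normal equation along $\xi$ and along the hypersphere-normal directions. Using $\Delta^\perp\xi = 0$ and $\trace A_{H^\varphi} = m|H^\varphi|^2$, the $\xi$-component collapses to
\[
\frac{mb}{a}\left(|H^\varphi|^2 + \frac{b^2}{a^2} - 1\right) = 0.
\]
Since $a\in(0,1)$ forces $b\neq 0$, this pointwise identity yields $|H^\varphi|^2 = 1 - b^2/a^2$; hence $M$ is automatically $CMC$ in the hypersphere, the constraint $|H^\varphi|^2 \ge 0$ forces $a \ge 1/\sqrt2$, and most importantly $|H^{j\circ\varphi}|^2 = |H^\varphi|^2 + b^2/a^2 = 1$. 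Thus biharmonicity alone pins the mean curvature of $M$ in $\mathbb{S}^n$ to the extreme value $1$, while the tangential equation reduces to $\trace A_{\nabla^\perp_{(\cdot)}H^\varphi}(\cdot)=0$ and carries no further constraint for the argument.

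With $j\circ\varphi$ now known to be $CMC$ (indeed $|H|=1$) and proper biharmonic in $\mathbb{S}^n$, I would invoke the equality case of Proposition \ref{th:IntervalH}, which gives that $M$ is minimal in some hypersphere $\mathbb{S}^{n-1}(1/\sqrt2)\subset\mathbb{S}^n$. It then remains to reconcile this with the hypothesis $M\subset\mathbb{S}^{n-1}(a)$. If $a = 1/\sqrt2$, then $|H^\varphi| = 0$ and $M$ is minimal in $\mathbb{S}^{n-1}(a) = \mathbb{S}^{n-1}(1/\sqrt2)$, which is the first alternative. If $a > 1/\sqrt2$, then $M$ is non-minimal in $\mathbb{S}^{n-1}(a)$ yet minimal in a distinct hypersphere $\mathbb{S}^{n-1}(1/\sqrt2)$; encoding both containments as the constancy of $\langle e_{n+1}, \overline r\rangle$ and of $\langle \overline n, \overline r\rangle$ (the latter being the structure vector attached to the value $|H|=1$), the image of $M$ lies in the intersection of two affine hyperplanes of $\mathbb{R}^{n+1}$, a codimension-two affine subspace whose intersection with $\mathbb{S}^n$ is the announced $\mathbb{S}^{n-2}(1/\sqrt2)$. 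This is the codimension reduction.

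I expect this final identification — verifying that the two hyperplanes are independent and that the resulting round sphere has radius exactly $1/\sqrt2$ — to be the main technical obstacle, the remaining steps being essentially bookkeeping with the umbilical relations. The converse is then immediate from the two example classes recalled above: a minimal submanifold of a half-radius hypersphere, reached through totally geodesic inclusions when $a>1/\sqrt2$, is proper biharmonic in $\mathbb{S}^n$, since totally geodesic inclusions preserve biharmonicity.
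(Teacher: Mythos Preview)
Your argument is correct. Note, however, that the paper does not supply its own proof of this proposition --- it is quoted from \cite{BalmusOniciuc2012} --- so the natural point of comparison is the paper's proof of the generalization, Theorem~\ref{th:lambdaBiharmonicSubmanifoldsThatLieInHyperspheres}, whose case $\lambda=0$, $r=1$ is exactly this statement.

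The two routes differ in where the work is done. The paper first rewrites the biharmonicity of $\iota\circ\varphi$ as the pair
\[
\tau_2(\varphi)=2m\Bigl(\tfrac{1}{a^2}-1\Bigr)\tau(\varphi),\qquad |\tau(\varphi)|^2=m^2\Bigl(2-\tfrac{1}{a^2}\Bigr),
\]
recognizes the first as saying that $\varphi$ is $\lambda'$-biharmonic in $\mathbb S^{n-1}(a)$, and then invokes Theorem~\ref{th:CMCLambdaBiharmonicImmersion} (hence ultimately the Chen--Yano result) to force $\varphi$ to be pseudo-umbilical and $PMC$, i.e.\ minimal in a small hypersphere of the correct radius. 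You instead project the normal biharmonic equation onto $\xi$ directly, obtain $|H^{j\circ\varphi}|=1$ in one stroke, and appeal once to the equality case of Proposition~\ref{th:IntervalH}. Your path is shorter and avoids the $\lambda$-biharmonic machinery, at the cost of being specific to $\lambda=0$; the paper's argument is set up to run uniformly in $\lambda$. Your codimension-reduction sketch via the intersection of the two affine hyperplanes $\Pi_0$ and $\Pi$ is precisely what the Remark following the proposition spells out; the verification you flag as the ``main technical obstacle'' amounts to checking that the center $\overline n$ of $\mathbb S^{n-1}(1/\sqrt2)$ lies in $\Pi_0$, which follows since $\overline n=(H^{j\circ\varphi}+\overline r)/2$ has last coordinate $b$ along $M$.
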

	
	\begin{remark}
		Assume that $\mathbb S^{n-1} (a) = \mathbb S^{n-1} (a) \times \left \{ \sqrt {1 - a^2} \right \}$. Then, when $a > 1 / \sqrt 2$, $\varphi (M)$ also belongs to the hypersphere  
		$$
		\mathbb S^{n-1} (1 / \sqrt 2) = \mathbb S^n \cap \Pi,
		$$ 
		therefore $\varphi (M)$ belongs to the totally geodesic hypersphere of $\mathbb S^{n-1} (1 / \sqrt 2)$ given by 
		$$
		\mathbb S^{n-2} (1 / \sqrt 2) = \mathbb S^{n-1} (1 / \sqrt 2) \cap (\Pi_0 \cap \Pi),$$ 
		where 
		$$
		\Pi_0 : y^{n+1} = \sqrt {1 - a^2} \quad \text{and} \quad \Pi : \langle \overline n, \overline r \rangle = \frac 1 2.
		$$
		Moreover, if we consider the hyperplane $\Pi'$ of $\mathbb R^{n+1}$ determined by the perpendicular line to $\Pi_0 \cap \Pi$ containing the origin of $\mathbb R^{n+1}$, which is parallel to $\overline n$, and by $\Pi_0 \cap \Pi$ we obtain that
		$$
		\mathbb S^{n-2} \left ( \frac 1 {\sqrt 2} \right ) \subset \mathbb S^{n-1} = \mathbb S^n \cap \Pi'.
		$$
	\end{remark}
	
	The next two results give examples of hypotheses that force the biharmonic submanifold to fall in the second class.
	
	\begin{theorem}[\cite{BalmusOniciuc2012}]\label{th:IntervalHPMCProperBiharmonicImmersions}
		Let $\varphi : {M^m} \to {\mathbb S^n}$ be a $PMC$ proper biharmonic immersion. Suppose that $m>2$ and $|H| \in (0,1)$. Then $|H| \in \left( 0 , (m-2) / m \right]$ and $|H| = (m-2) / m$ if and only if locally $\varphi (M)$ is an open subset of a product 
		$$
		M_1 \times \mathbb S^1 \left( \frac 1 {\sqrt 2}\right) \subset \mathbb S^n,
		$$
		where $M_1$ is a minimal submanifold embedded in $\mathbb S^{n-2} \left(1 / \sqrt 2\right)$ and the splitting $\mathbb R^{n+1} = \mathbb R^{n-1} \oplus \mathbb R^2$ does not depend on the point of $M$. Moreover, if $M$ is complete, then the previous decomposition of $\varphi (M)$ is global, where $M_1$ is a complete minimal submanifold in $\mathbb S^{n-2} \left( 1 / {\sqrt 2}\right)$.
	\end{theorem}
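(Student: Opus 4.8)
The plan is to reduce biharmonicity to algebraic relations for the shape operator $A_H$, extract the crucial fact that $A_H$ has only the two eigenvalues $\pm|H|$, read off the bound from a multiplicity count, and finally recover the product at the endpoint through a Codazzi-splitting argument.

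First I would write down the biharmonic equation in the $PMC$ case. Since $\nabla^\perp H = 0$ forces the rough Laplacian $\Delta^\perp H$ to vanish and $\grad|H|^2 = 0$, the tangential part of $\tau_2(\varphi)$ vanishes identically and the normal part collapses to $\trace B(\cdot, A_H \cdot) = mH$. Pairing this with $H$ and using the identity $\trace A_H = m|H|^2$ yields the scalar relations $\trace A_H = \trace A_H^2 = m|H|^2$; pairing with an arbitrary normal field $\eta\perp H$ gives $\trace(A_\eta A_H)=0$; and rewriting the vector identity in an $A_H$-eigenframe $\{e_i\}$ with eigenvalues $\lambda_i$, together with $mH=\trace B=\sum_i B(e_i,e_i)$, produces the normal-vector relation $\sum_i(\lambda_i-1)\,B(e_i,e_i)=0$.

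The key structural step --- and the part I expect to be the main obstacle --- is to prove that $A_H^2 = |H|^2\id$, i.e. that $A_{H/|H|}$ is an involution with eigenvalues $\pm 1$. The two trace relations alone are insufficient (they only give $|H|\le 1$ and are consistent with many eigenvalue configurations, even with all $\lambda_i>0$), so the finer information must be squeezed out by feeding the vector relation into the Ricci and Codazzi equations and using $\nabla^\perp H=0$ to control the components of $B$ transverse to the $H$-direction, thereby forcing every $\lambda_i$ into $\{+|H|,-|H|\}$. Granting this, let $p,q$ be the multiplicities of $+|H|,-|H|$; then $p+q=m$ and $\trace A_H=(p-q)|H|=m|H|^2$ give $q=\tfrac m2(1-|H|)$. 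Since $|H|\in(0,1)$ and $M$ is proper biharmonic we must have $q\ge 1$ (if $q=0$ then $A_H=|H|\id$ and $p-q=m=m|H|$ force $|H|=1$, a contradiction), whence $|H|=1-2q/m\le (m-2)/m$, with equality exactly when $q=1$, $p=m-1$.

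For the equality case $|H|=(m-2)/m$ I would exploit that $A_H$ is then a Codazzi tensor (from the Codazzi equation together with $\nabla^\perp H=0$ in the space form $\mathbb S^n$) having two distinct constant eigenvalues, with a one-dimensional eigendistribution $V_-$ and an $(m-1)$-dimensional eigendistribution $V_+$. The standard theory of Codazzi tensors with constant eigenvalues shows these distributions are parallel and integrable, so by de Rham $M$ splits locally as a Riemannian product $M_1^{m-1}\times M_2^1$. I would then convert this intrinsic splitting into an extrinsic one: forming the appropriate $\pm$ combinations of the position vector $\bar r=\iota\circ\varphi$ and the parallel direction $H/|H|$ produces two maps into complementary, point-independent subspaces realizing $\mathbb R^{n-1}\oplus\mathbb R^2=\mathbb R^{n+1}$, exhibiting $\varphi(M)$ as an open subset of $M_1\times\mathbb S^1(1/\sqrt2)$ with $M_1$ minimal in $\mathbb S^{n-2}(1/\sqrt2)$; comparison with the second class of examples identifies the factors and confirms $r_1=r_2=1/\sqrt2$. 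The converse is the direct computation that this product satisfies $|H|=|m_2-m_1|/m=(m-2)/m$, and completeness upgrades the local de Rham splitting to a global one, with $M_1$ a complete minimal submanifold of $\mathbb S^{n-2}(1/\sqrt2)$.
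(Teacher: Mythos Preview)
Your proposal contains a genuine gap at precisely the point you yourself flag as ``the main obstacle'': the claim that $A_H^2=|H|^2\id$, i.e.\ that every eigenvalue of $A_H$ equals $\pm|H|$, is \emph{false} for general $PMC$ proper biharmonic submanifolds in $\mathbb S^n$. The biharmonic equation gives only $|A_H|^2=m|H|^2$ and $\langle A_H,A_\xi\rangle=0$ for $\xi\perp H$; no amount of Codazzi/Ricci manipulation will upgrade this to the involutive statement, because there are counterexamples. For instance, proper biharmonic isoparametric hypersurfaces in spheres with three or four distinct principal curvatures are known to exist; for these $A_H=fA$ has three or four distinct eigenvalues, not two. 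Your entire multiplicity argument (the count $q=\tfrac m2(1-|H|)\ge 1$) rests on this false premise, so the bound $|H|\le(m-2)/m$ is not established.

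The paper (more precisely, the $\lambda$-biharmonic generalisation whose $\lambda=0$ case is exactly this theorem) avoids the trap by never asserting a rigid eigenvalue structure a priori. Instead it works with the traceless operator $\Phi=A_H-|H|^2\id$, computes $-\tfrac12\Delta|\Phi|^2$ via a Weitzenb\"ock-type formula, and bounds the cubic term $\trace\Phi^3$ using Okumura's Lemma. Since $|\Phi|^2$ is constant this yields an algebraic inequality in $|H|^2$ whose solution set is $(0,(m-2)^2/m^2]$. Only \emph{at the endpoint} do the equality conditions ($\nabla\Phi=0$ and equality in Okumura) force $A_H$ to have exactly the two eigenvalues $\pm|H|$ with multiplicities $m-1$ and $1$; the de Rham/Moore splitting then proceeds much as you outline. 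So the order is reversed: the eigenvalue rigidity is a \emph{consequence} of being extremal, not a tool for proving the bound.
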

	
	\begin{remark}
		We note that the above result was proved independently and using a different technique in \cite{WangWu2012}.
	\end{remark}
	
	\begin{proposition}[\cite{BalmusOniciuc2012}]\label{th:Theorem170}
		Let $\varphi : {M^m} \to {\mathbb S^n}$ be a $PMC$ proper biharmonic immersion with $\nabla A_H = 0$. Suppose that $m > 2$ and $|H| \in \left( 0, (m-2) / m \right)$. Then, $m> 4$ and, locally,
		\begin{equation*}
			\varphi (M) = M_1 ^{m_1} \times M_2^{m_2} \subset \mathbb S^{n_1} \left( 1/\sqrt 2\right) \times \mathbb S^{n_2} \left( 1/\sqrt 2 \right) \subset \mathbb S^n,
		\end{equation*}
		where $M_i$ is a minimal embedded submanifold in $\mathbb S^{n_i} \left( 1/\sqrt 2 \right)$, $m_i \geq 2$, $i \in \{ 1, 2\}$, $m_1 + m_2 = m$, $m_1 \neq m_2$, $n_1 + n_2 = n-1$. In this case $|H| = |m_1 - m_2| / m$. Moreover, if $M$ is complete, then the above decomposition of $\varphi (M)$ holds globally, where $M_i$ is a complete minimal submanifold in $\mathbb S^{n_i} \left( 1/\sqrt 2 \right)$, $i\in \{ 1,2\}$.
	\end{proposition}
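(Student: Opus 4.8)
The plan is to use the two hypotheses for different purposes: biharmonicity will furnish purely algebraic constraints on the shape operator $A_H$, while the condition $\nabla A_H=0$, fed into the sphere curvature through the fundamental equations, will produce a splitting of both $M$ and of the immersion. First I would specialize \eqref{eq:Tau2Immersions} to $N=\mathbb S^n$ and separate the tangent and normal parts of $\tau_2=0$. Because $\nabla^\perp H=0$ and $|H|$ is constant, the tangential equation and $\Delta^\perp H$ vanish identically, so the only surviving condition is the normal one, $\trace B(A_H\cdot,\cdot)=mH$. Pairing it with $H$ and with an arbitrary normal $\eta\perp H$ gives the relations
\[
\trace A_H=\trace A_H^2=m|H|^2,\qquad \trace(A_H A_\eta)=0\quad(\eta\perp H).
\]
These are the only analytic inputs; from here on everything is the interaction of $A_H$ with the product geometry.

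Next I would split $M$. Since $\nabla A_H=0$, the eigenvalues $\lambda_1,\dots,\lambda_k$ of $A_H$ are constant and their eigendistributions $D_{\lambda_1},\dots,D_{\lambda_k}$ (of ranks $m_a$) are smooth, parallel and mutually orthogonal, so de Rham's theorem splits $M$ locally as a Riemannian product $L_1\times\cdots\times L_k$. To make this extrinsic I would use the Ricci equation \eqref{eq:RicciEquation} with $\eta=H$: as $\bigl(R^{\mathbb S^n}(X,Y)H\bigr)^\perp=0$ and $R^\perp(\cdot,\cdot)H=0$ (the latter because $\nabla^\perp H=0$), it reduces to $B(A_HX,Y)=B(X,A_HY)$; evaluating on $X\in D_{\lambda_a}$, $Y\in D_{\lambda_b}$ with $a\neq b$ yields $(\lambda_a-\lambda_b)B(X,Y)=0$, hence $B(D_{\lambda_a},D_{\lambda_b})=0$. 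The Moore--N\"olker decomposition theorem for isometric immersions into products of space forms then splits $\varphi$ as an extrinsic product $\varphi_1\times\cdots\times\varphi_k$ with $\varphi_a\colon L_a\to\mathbb S^{n_a}(r_a)$ and $\prod_a\mathbb S^{n_a}(r_a)\subset\mathbb S^n$, $\sum_a r_a^2=1$; crucially I split along the \emph{distinct} eigenvalues, so there are exactly $k$ factors and the $\lambda_a$ are pairwise different.

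Then I would pin down the factors and eigenvalues, in this order to avoid circularity. Writing $H=H^{\mathrm{rad}}+\sum_a H^{\mathrm{int}}_a$, the sum of the mean curvature of the Clifford-type torus $\prod_a\mathbb S^{n_a}(r_a)$ and of the mean curvatures of the $\varphi_a$ inside their spheres, block-diagonality of $A_H$ gives $A_{H^{\mathrm{int}}_a}=c_a\,\id$ on $D_{\lambda_a}$. Projecting $\trace B(A_H\cdot,\cdot)=mH$ onto $H^{\mathrm{int}}_a$ produces $c_a(m_a\lambda_a-m)=0$, and the alternative $\lambda_a=m/m_a$ is excluded since it would force $\trace A_H^2\ge m>m|H|^2$; hence $c_a=0$ and each $\varphi_a$ is minimal in $\mathbb S^{n_a}(r_a)$. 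With the factors minimal, a direct computation of the torus shape operator gives $\lambda_a=\frac{m_a}{m r_a^2}-1$, and projecting the biharmonic equation onto each radial normal $N_b$ of the torus and substituting this value yields $|H|^2=\lambda_b^2$, i.e.\ $\lambda_b=\pm|H|$ for every $b$. As the $\lambda_a$ are distinct there are at most two of them, and $|H|\in(0,1)$ rules out the pseudo-umbilical cases where all signs agree (by Proposition \ref{th:IntervalH}); thus $k=2$ with eigenvalues $\pm|H|$. Back-substitution forces $r_1=r_2=1/\sqrt2$, while $\trace A_H=(m_1-m_2)|H|=m|H|^2$ gives $|H|=|m_1-m_2|/m$ with $m_1\neq m_2$; finally $|H|<(m-2)/m$ reads $|m_1-m_2|<m-2$, which forces $m_1,m_2\ge2$ and hence $m=m_1+m_2\ge5>4$. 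The splitting $\mathbb R^{n+1}=\mathbb R^{n_1+1}\oplus\mathbb R^{n_2+1}$ is constant in $p$ because the distributions are parallel, giving $n_1+n_2=n-1$; for complete $M$ the local decomposition globalizes by the de Rham theorem on the universal cover.

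The main obstacle I anticipate is the middle passage: establishing $B(D_{\lambda_a},D_{\lambda_b})=0$ and correctly invoking the Moore--N\"olker reduction to get a genuine \emph{extrinsic} product, and then the bookkeeping that forces minimality of each factor \emph{before} the eigenvalues can be computed (the $c_a(m_a\lambda_a-m)=0$ step). Once minimality is secured, the identity $\lambda_b^2=|H|^2$ and the conclusions $k=2$, $r_a=1/\sqrt2$ are short computations.
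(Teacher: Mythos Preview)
The paper does not give its own proof of this proposition; it is quoted from \cite{BalmusOniciuc2012} and used as background. So there is no ``paper's proof'' to compare against, and your outline has to stand on its own.

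Your overall architecture is sound: $\nabla A_H=0$ yields parallel eigendistributions, the Ricci equation with $\eta=H$ gives $B(D_{\lambda_a},D_{\lambda_b})=0$, de Rham plus Moore produce an extrinsic product into $\prod_a\mathbb S^{n_a}(r_a)$, and once the factors are minimal the identities $\lambda_a=\pm|H|$, $r_a=1/\sqrt2$, $|H|=|m_1-m_2|/m$ follow by direct computation. The exclusion of a nontrivial $\mathbb R^{n_0}$ component in Moore's theorem (via Proposition~\ref{th:ProperBiharmonicSubmanifoldsInSpheresGeneral}) should be mentioned but is routine.

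There is, however, a real gap in your minimality step. From $A_{H^{\mathrm{int}}_a}=c_a\,\id$ on $D_{\lambda_a}$ and $A_{H^{\mathrm{int}}_a}=0$ on $D_{\lambda_b}$, $b\neq a$, one gets $\trace A_{H^{\mathrm{int}}_a}=m_a c_a$; but the general identity $\trace A_\eta=m\langle H,\eta\rangle$ also gives $\trace A_{H^{\mathrm{int}}_a}=m|H^{\mathrm{int}}_a|^2$, so $m|H^{\mathrm{int}}_a|^2=m_a c_a$. Projecting $\trace B(A_H\cdot,\cdot)=mH$ onto $H^{\mathrm{int}}_a$ then yields
\[
\lambda_a\,m_a c_a \;=\; m|H^{\mathrm{int}}_a|^2 \;=\; m_a c_a,
\]
i.e.\ $c_a(\lambda_a-1)=0$, \emph{not} $c_a(m_a\lambda_a-m)=0$. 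Your exclusion argument (``$\lambda_a=m/m_a$ forces $\trace A_H^2\ge m$'') therefore does not apply, and $\lambda_a=1$ is not ruled out by the inequality you invoke. The clean repair is to observe that $A_{H^{\mathrm{int}}_a}=c_a\,\id$ says each $\varphi_a$ is pseudo-umbilical, and it is also $PMC$ (inherited from $\nabla^\perp H=0$ through the product structure); by the Chen--Yano result used in the paper, a non-minimal such $\varphi_a$ lies minimally in a small hypersphere of $\mathbb S^{n_a}(r_a)$, which would place $M$ in a small hypersphere of $\mathbb S^n$ and force $|H|=1$ by Proposition~\ref{th:ProperBiharmonicSubmanifoldsInSpheresGeneral}, contradicting $|H|<(m-2)/m$. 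With minimality secured this way, the rest of your argument goes through.
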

	Now we consider the following situation. Let ${\varphi_1} : {M_1^{m_1}} \to {\mathbb S^{n_1} (r_1)}$, $ {\varphi_2} : M_2^{m_2} \to {\mathbb S^{n_2} (r_2)}$ be two immersions and consider $\iota : \mathbb S^{n_1} (r_1)\times \mathbb S^{n_2} (r_2) \to \mathbb S^n$ the canonical inclusion. We want to see when $M = M_1 \times M_2$ belongs to one of the two classes of examples mentioned at the beginning. That is, we want to see when there is another splitting of $\mathbb R^{n+1}$ such that, up to isometries of $\mathbb R^{n+1}$, $M = \tilde M_1 \times \tilde M_2$, with $\tilde M_1$ and $\tilde M_2$ minimal in spheres of radius $1 / \sqrt 2$. This objective can be also seen as a generalization of Proposition \ref{th:ProperBiharmonicSubmanifoldsInSpheresGeneral}. As we will see, this case is not so rigid as the one in Proposition \ref{th:ProperBiharmonicSubmanifoldsInSpheresGeneral}.
	
	We recall the following result.
	\begin{theorem}
		Let $\varphi_1 : M^{m_1}_1 \to \mathbb S^{n_1}(r_1)$ and $\varphi_2 : M^{m_2}_2 \to S^{n_2} (r_2)$ be two immersions, where $r_1^2 + r_2^2 = 1$ and $n_1 + n_2 = n - 1$. Then $M_1 \times M_2$ is a proper biharmonic submanifold in $\mathbb S^{n}$ if and only if
		\begin{equation}\label{eq:biharmonicitySubmanifoldTorusR}
			\left\{
			\begin{array}{l}
				|\tau(\varphi_1)|>0 \quad \text{or}\quad |\tau(\varphi_2)|>0 \quad \text{or} \quad \frac {m_1} {r_1^2} \neq \frac {m_2} {r_2^2}\\[10pt]
				\tau_2(\varphi_1) = 2 r_2^2 \left( \frac {m_1} {r_1^2} - \frac {m_2} {r_2^2} \right) \tau(\varphi_1)\\[10pt]
				\tau_2(\varphi_2) = 2 r_1^2 \left( \frac {m_2} {r_2^2} - \frac {m_1} {r_1^2} \right) \tau(\varphi_2)\\[10pt]
				\frac {|\tau(\varphi_1)|^2} {r_1^2} - \frac {|\tau(\varphi_2)|^2} {r_2^2} + \left ( r_2^2 - r_1^2 \right ) \left( \frac {m_2} {r_2^2} - \frac {m_1} {r_1^2} \right) ^2 = 0
			\end{array}
			\right..
		\end{equation}
	\end{theorem}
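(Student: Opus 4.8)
The plan is to compute the bitension field of the composite isometric immersion $\varphi = \iota\circ(\varphi_1\times\varphi_2)\colon M_1\times M_2\to\mathbb S^n$ and to read off biharmonicity componentwise. First I would fix the geometry of the target. Writing the position vector of $\mathbb S^n\subset\mathbb R^{n+1}=\mathbb R^{n_1+1}\oplus\mathbb R^{n_2+1}$ as $\overline r=(p,q)$ with $|p|=r_1$, $|q|=r_2$, the product $\mathbb S^{n_1}(r_1)\times\mathbb S^{n_2}(r_2)$ is a hypersurface of $\mathbb S^n$ whose unit normal is $\xi=-\tfrac{r_2}{r_1}p+\tfrac{r_1}{r_2}q$. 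A direct computation with the flat connection of $\mathbb R^{n+1}$ shows that $\xi$ is parallel in the normal bundle of $M=M_1\times M_2$ and that its shape operator is $A_\xi=\tfrac{r_2}{r_1}\id$ on $TM_1$ and $-\tfrac{r_1}{r_2}\id$ on $TM_2$. Consequently the normal bundle of $M$ in $\mathbb S^n$ splits orthogonally as $N_1\oplus N_2\oplus\mathbb R\xi$, where $N_i$ is the normal bundle of $\varphi_i$; moreover the normal connection, the normal Laplacian and the shape operators $A_{H_i}$ all respect this splitting, $A_{H_1}$ acting as the $\mathbb S^{n_1}(r_1)$--shape operator on $TM_1$ and by $0$ on $TM_2$ (and symmetrically for $A_{H_2}$). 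These structural facts are what make the whole computation tractable.

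Next I would compute the mean curvature. Adding the umbilical contributions of the two factor spheres and of the unit sphere, one obtains
$$\tau(\varphi)=\tau(\varphi_1)+\tau(\varphi_2)+\mu\,r_1 r_2\,\xi,\qquad \mu:=\frac{m_1}{r_1^2}-\frac{m_2}{r_2^2},$$
where $\tau(\varphi_i)=m_iH_i$ is regarded as a section of $N_i$. Since the three summands lie in mutually orthogonal subbundles, $\tau(\varphi)$ vanishes identically if and only if $\tau(\varphi_1)=0$, $\tau(\varphi_2)=0$ and $\mu=0$; hence $\varphi$ is non-minimal exactly under the first line of \eqref{eq:biharmonicitySubmanifoldTorusR}, which accounts for the properness condition.

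For biharmonicity I would use $\tau_2(\varphi)=-\Delta^\varphi\tau(\varphi)+m\,\tau(\varphi)$, valid because $\mathbb S^n$ has constant curvature $1$, and project the vanishing of $\tau_2(\varphi)$ onto the five orthogonal directions $TM_1$, $TM_2$, $N_1$, $N_2$, $\mathbb R\xi$. By the splitting above, the normal Laplacian $\Delta^\perp H$, the term $\trace B(\cdot,A_H\cdot)$, the normal connection $\nabla^\perp H$ and $\grad|H|^2$ all decompose as the weighted sums of the corresponding objects of $\varphi_1$ and $\varphi_2$ (the parallel direction $\xi$ contributes only through $A_\xi$ and through a constant in $|H|^2$). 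Collecting the $N_i$ and $TM_i$ components and re-expressing the curvature constant $1$ of $\mathbb S^n$ against the constants $1/r_i^2$ of $\mathbb S^{n_i}(r_i)$, the two pieces reassemble precisely into the tensorial equations $\tau_2(\varphi_1)=2r_2^2\mu\,\tau(\varphi_1)$ and $\tau_2(\varphi_2)=-2r_1^2\mu\,\tau(\varphi_2)$; here the tangential parts reduce to $\tau_2(\varphi_i)^\top=0$, which is exactly the $TM_i$ component since each $\tau(\varphi_i)$ is normal. Finally the $\xi$-component involves only $\trace(A_\xi A_H)$ and the scalar $\mu r_1 r_2$; substituting $|\tau(\varphi_i)|^2=m_i^2|H_i|^2$ and simplifying with $r_1^2+r_2^2=1$ yields the last equation of the system.

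The routine but delicate part, and the main obstacle, is the bookkeeping in the third step: one must verify carefully that the rough Laplacian of the product decomposes as claimed, and above all track the curvature constants. The exact coefficients $2r_2^2\mu$ and $-2r_1^2\mu$ and the term $(r_2^2-r_1^2)\mu^2$ emerge only after using the identities $\tfrac{m_1}{r_1^2}-m=r_2^2\mu$ and its symmetric counterpart (which in turn use $r_1^2+r_2^2=1$), so the whole statement hinges on this constant-matching rather than on any conceptual difficulty. Since every step is an equivalence, the resulting system is equivalent to the biharmonicity of $M$, which gives the asserted characterization.
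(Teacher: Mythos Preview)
The paper does not actually give a proof of this theorem: it is introduced with ``We recall the following result'' and only a note is added that the special case $r_1=r_2=1/\sqrt 2$ appears in \cite{CaddeoMontaldoOniciuc2002} and \cite{FetcuOniciuc2022}. So there is no argument in the paper to compare your proposal against.

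That said, your outline is the natural direct computation and it is correct. Your identification of the unit normal $\xi$, the shape operator $A_\xi$ acting by $\tfrac{r_2}{r_1}\id$ on $TM_1$ and $-\tfrac{r_1}{r_2}\id$ on $TM_2$, and the formula
\[
\tau(\varphi)=\tau(\varphi_1)+\tau(\varphi_2)+\mu\,r_1 r_2\,\xi,\qquad \mu=\frac{m_1}{r_1^2}-\frac{m_2}{r_2^2},
\]
are all right (and the latter is exactly how the paper obtains the relation $|\tau(\iota\circ\varphi)|^2=|\tau(\varphi_1)|^2+|\tau(\varphi_2)|^2+r_1^2 r_2^2\mu^2$ in the proof of Proposition~\ref{th:H1MaxEquivH2Max}). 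The use of $\tau_2(\varphi)=-\Delta^\varphi\tau(\varphi)+m\,\tau(\varphi)$ together with the orthogonal splitting $T\mathbb S^n|_M=TM_1\oplus TM_2\oplus N_1\oplus N_2\oplus\mathbb R\xi$ is the standard route, and the key identity $\tfrac{m_i}{r_i^2}-m=r_{3-i}^{\,2}\,(\pm\mu)$ that produces the constants $2r_2^2\mu$, $-2r_1^2\mu$ and the $(r_2^2-r_1^2)\mu^2$ term is exactly the right bookkeeping device. The only place to be careful is the $TM_i$ components: you should make explicit that the terms $-A_\xi(\mu r_1 r_2)$ coming from $\Delta^\varphi(\mu r_1 r_2\,\xi)$ land in $TM_i$ and must be matched against the tangential part of $-\Delta^\varphi\tau(\varphi_i)$ before concluding $\tau_2(\varphi_i)^{TM_i}=0$; once this cross term is absorbed, the two pieces indeed reassemble into $\tau_2(\varphi_i)=\lambda_i\tau(\varphi_i)$.
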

	We note that, for $r_1 = r_2 = 1 / \sqrt 2$ the system \eqref{eq:biharmonicitySubmanifoldTorusR} was given in \cite{CaddeoMontaldoOniciuc2002} and \cite{FetcuOniciuc2022}.
	
	\begin{corollary}\label{th:CorolComponentsAreCMC}
		Let $M^{m_1}_1$, $M^{m_2}_2$ be two submanifolds in $\mathbb S^{n_1}(r_1)$ and $S^{n_2} (r_2)$, respectively, where $r_1^2 + r_2^2 = 1$ and $n_1 + n_2 = n - 1$. If $M_1 \times M_2$ is a proper biharmonic submanifold in $\mathbb S^{n}$, then $M_1$ and $M_2$ are $CMC$ in $\mathbb S^{n_1} (r_1)$ and $\mathbb S^{n_2} (r_2)$, respectively.
	\end{corollary}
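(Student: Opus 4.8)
The plan is to read the conclusion directly off the characterization system \eqref{eq:biharmonicitySubmanifoldTorusR} supplied by the theorem recalled just above. Since $M_1 \times M_2$ is assumed to be proper biharmonic in $\mathbb{S}^n$, that theorem guarantees that the whole of \eqref{eq:biharmonicitySubmanifoldTorusR} holds at every point of the product $M = M_1 \times M_2$; in particular its fourth (scalar) equation is satisfied identically on $M$. So I would not attempt any Bochner-type or maximum-principle argument at all, but instead exploit the product structure encoded in that last equation.

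First I would isolate the fourth equation and recall that the tension fields of the factors satisfy $|\tau(\varphi_i)|^2 = m_i^2 |H_i|^2$, where $H_i$ is the mean curvature vector field of $\varphi_i$. Since $r_1, r_2, m_1, m_2$ are fixed constants, the term $(r_2^2 - r_1^2)\left(\tfrac{m_2}{r_2^2} - \tfrac{m_1}{r_1^2}\right)^2$ is a constant, and the equation rearranges to
\[
\frac{|\tau(\varphi_1)|^2}{r_1^2} \;=\; \frac{|\tau(\varphi_2)|^2}{r_2^2} \;-\; (r_2^2 - r_1^2)\left(\frac{m_2}{r_2^2} - \frac{m_1}{r_1^2}\right)^2 .
\]
The decisive observation is then a separation of variables: the left-hand side is a function only of the point $p_1 \in M_1$, while the right-hand side is a function only of the point $p_2 \in M_2$. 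Freezing $p_2$ and letting $p_1$ range over $M_1$ forces $p_1 \mapsto |\tau(\varphi_1)(p_1)|^2$ to be constant, and symmetrically freezing $p_1$ shows $|\tau(\varphi_2)|^2$ is constant on $M_2$. Hence $|H_1|$ and $|H_2|$ are each constant, i.e.\ $M_1$ and $M_2$ are $CMC$, as claimed.

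I expect there to be essentially no analytic obstacle here, and this is precisely why the statement is a corollary rather than a theorem: whereas a $CMC$ conclusion drawn from the tangential part of a biharmonic (or $\lambda$-biharmonic) equation would ordinarily require a $PMC$ or compactness hypothesis to control $\grad |H|^2$, the extrinsic product hypothesis decouples the two mean curvatures completely. The only point that genuinely needs to be checked—and it is immediate—is that $|\tau(\varphi_1)|^2$ and $|\tau(\varphi_2)|^2$ really depend on disjoint sets of variables, which holds because each $\varphi_i$ is an immersion of the single factor $M_i$, so its tension field is a quantity intrinsic to that factor and independent of the other.
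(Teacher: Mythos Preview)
Your argument is correct and is exactly the intended one: the paper states this as an immediate corollary of the characterization system \eqref{eq:biharmonicitySubmanifoldTorusR} and gives no separate proof, and the separation-of-variables reading of the fourth equation is precisely how the $CMC$ conclusion drops out. There is nothing to add.
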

	\begin{corollary}[\cite{CaddeoMontaldoOniciuc2002}] \label{th:CorollaryBiharmonicityProductOfSpheres}
		Let $M_1 ^{m_1}$, $M_2^{m_2}$ be two minimal submanifolds in $\mathbb S^{n_1} (r_1)$ and $\mathbb S^{n_2} (r_2)$, respectively, where $r_1^2 + r_2^2 = 1$ and $n_1 + n_2 = n - 1$. The manifold $M_1 \times M_2$ is a proper biharmonic submanifold in $\mathbb S^{n}$ if and only if $r_1 = r_2 = 1 / {\sqrt 2}$ and $m_1 \neq m_2$.
	\end{corollary}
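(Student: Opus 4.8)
The plan is to specialize the preceding theorem, whose system \eqref{eq:biharmonicitySubmanifoldTorusR} characterizes proper biharmonicity of $M_1 \times M_2$, to the case in which both factors are minimal. First I would observe that minimality of $\varphi_i$ in $\mathbb{S}^{n_i}(r_i)$ means $\tau(\varphi_i) = 0$; since a harmonic map is in particular biharmonic, this simultaneously forces $\tau_2(\varphi_i) = 0$. Substituting $\tau(\varphi_1) = \tau(\varphi_2) = 0$ and $\tau_2(\varphi_1) = \tau_2(\varphi_2) = 0$ into \eqref{eq:biharmonicitySubmanifoldTorusR} collapses the two tension-field equations (the second and third lines) to the trivial identity $0 = 0$, so they impose no constraint whatsoever.

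It then remains to analyze the first and fourth lines under the same substitution. The first line reduces to the single requirement $\tfrac{m_1}{r_1^2} \neq \tfrac{m_2}{r_2^2}$, because the two alternatives $|\tau(\varphi_i)| > 0$ are excluded by minimality. The fourth line, after dropping the vanishing tension terms, becomes
\[
\left( r_2^2 - r_1^2 \right)\left( \frac{m_2}{r_2^2} - \frac{m_1}{r_1^2} \right)^2 = 0.
\]
I would then combine these two surviving conditions: the inequality guarantees that $\tfrac{m_2}{r_2^2} - \tfrac{m_1}{r_1^2} \neq 0$, so the squared factor above is nonzero, and the displayed equation therefore forces $r_1^2 = r_2^2$. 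Together with the constraint $r_1^2 + r_2^2 = 1$ this yields $r_1 = r_2 = 1/\sqrt{2}$.

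Finally, feeding $r_1 = r_2 = 1/\sqrt{2}$ back into the surviving inequality $\tfrac{m_1}{r_1^2} \neq \tfrac{m_2}{r_2^2}$ turns it into $2m_1 \neq 2m_2$, that is $m_1 \neq m_2$, which establishes the forward implication. The converse is immediate: retracing the substitutions shows that $r_1 = r_2 = 1/\sqrt{2}$ together with $m_1 \neq m_2$ makes every line of \eqref{eq:biharmonicitySubmanifoldTorusR} hold, so $M_1 \times M_2$ is proper biharmonic. I do not expect any genuine obstacle here, since all the analytic content is carried by the preceding theorem; the corollary is a clean algebraic reduction of its system once the minimality hypothesis annihilates the tension and bitension fields. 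The only point requiring a moment's care is recording that vanishing tension implies vanishing bitension, which rules out any hidden constraint from the middle two lines.
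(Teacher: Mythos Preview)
Your proposal is correct and is exactly the reduction the paper intends: the corollary is stated immediately after the theorem characterizing proper biharmonicity via system \eqref{eq:biharmonicitySubmanifoldTorusR}, and your argument simply specializes that system under $\tau(\varphi_1)=\tau(\varphi_2)=0$. The paper gives no separate proof, so there is nothing further to compare.
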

	\begin{corollary}
		Let $\varphi_1 : M^{m_1}_1 \to \mathbb S^{n_1}(r_1)$ and $\varphi_2 : M^{m_2}_2 \to S^{n_2} (r_2)$ be two immersions, where $r_1^2 + r_2^2 = 1$ and $n_1 + n_2 = n - 1$. If $M_1^{m_1}$ is proper biharmonic in $\mathbb S^{n_1} (r_1)$, then $M_1 \times M_2$ is proper biharmonic in $S^{n}$ if and only if 
		$$
		\tau_2 (\varphi_2) = 0, \quad \frac {m_1} {r_1^2} = \frac {m_2} {r_2^2} \quad \text{and} \quad \frac {|\tau(\varphi_1)|^2} {r_1^2} = \frac {|\tau(\varphi_2)|^2} {r_2^2}.
		$$
	\end{corollary}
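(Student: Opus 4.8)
The plan is to read the corollary off directly from the preceding theorem, specializing the characterization \eqref{eq:biharmonicitySubmanifoldTorusR} to the case in which the first factor is already proper biharmonic in its own sphere. The key observation to record at the outset is what the standing hypothesis means: $M_1^{m_1}$ being proper biharmonic in $\mathbb{S}^{n_1}(r_1)$ is precisely $\tau_2(\varphi_1)=0$ together with the non-minimality $\tau(\varphi_1)\neq 0$. This second fact is the only nontrivial ingredient in the argument, so I would isolate it first.

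For the forward implication, I would assume $M_1\times M_2$ is proper biharmonic in $\mathbb{S}^n$ and invoke the theorem to get the four relations of \eqref{eq:biharmonicitySubmanifoldTorusR}. Substituting $\tau_2(\varphi_1)=0$ into the second relation gives $2r_2^2 (m_1/r_1^2-m_2/r_2^2)\,\tau(\varphi_1)=0$; since $r_2\neq 0$ and $\tau(\varphi_1)\neq 0$, this forces the balance $m_1/r_1^2=m_2/r_2^2$. Feeding this equality into the third relation immediately yields $\tau_2(\varphi_2)=0$, while in the fourth relation the term $(r_2^2-r_1^2)(m_2/r_2^2-m_1/r_1^2)^2$ vanishes, leaving $|\tau(\varphi_1)|^2/r_1^2=|\tau(\varphi_2)|^2/r_2^2$. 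This recovers exactly the three asserted conditions. For the converse I would assume those three equalities together with the standing hypotheses $\tau_2(\varphi_1)=0$, $\tau(\varphi_1)\neq 0$, and verify the four lines of \eqref{eq:biharmonicitySubmanifoldTorusR} in turn: the first holds because $|\tau(\varphi_1)|>0$; the second and third hold because the common factor $m_1/r_1^2-m_2/r_2^2$ vanishes, making both sides zero; and the fourth holds because its quadratic term drops out and the two remaining terms cancel by hypothesis. Applying the theorem then shows $M_1\times M_2$ is proper biharmonic in $\mathbb{S}^n$.

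I do not expect any genuine analytic obstacle, since the corollary is essentially substitution into the already-established system. The one point I would be careful about is the use of non-minimality of $M_1$: it is exactly $\tau(\varphi_1)\neq 0$ that permits cancelling $\tau(\varphi_1)$ in the second relation and thereby forces $m_1/r_1^2=m_2/r_2^2$. Without the properness of the first factor this step, and hence the ``only if'' direction, would break down, so I would flag that hypothesis as the crux of the proof.
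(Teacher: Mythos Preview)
Your proposal is correct and is exactly the intended argument: the paper states this result as an immediate corollary of the characterization \eqref{eq:biharmonicitySubmanifoldTorusR} without giving a separate proof, and your derivation simply fills in the straightforward substitution, correctly isolating the role of $\tau(\varphi_1)\neq 0$ in forcing $m_1/r_1^2=m_2/r_2^2$.
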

	\begin{corollary}
		Let ${\varphi_1} : {M_1^{m_1}} \to {\mathbb S^{n_1} (r_1)}$ be an immersion, $r_1^2 + r_2^2 = 1$ and $n_1 + n_2 = n - 1$. Then $M_1 \times S^{n_2} (r_2)$ is a proper biharmonic submanifold in $\mathbb S^{n}$ if and only if 
		\begin{equation*}
			\left \{
			\begin{array}{l}
				|\tau(\varphi_1)| > 0 \quad \text{or} \quad \frac {m_1} {r_1^2} \neq  \frac {n_2} {r_2^2} \\[10pt]
				\tau_2(\varphi_1) = 2 r_2^2 \left( \frac {m_1} {r_1^2} - \frac {n_2} {r_2^2} \right) \tau(\varphi_1)\\[10pt]
				|\tau(\varphi_1)|^2 = r_1^2 \left ( r_1^2 - r_2^2 \right ) \left( \frac {n_2} {r_2^2} - \frac {m_1} {r_1^2} \right)^2	
			\end{array}
			\right..
		\end{equation*}
	\end{corollary}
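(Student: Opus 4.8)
The plan is to obtain this corollary as a direct specialization of the characterization theorem preceding Corollary~\ref{th:CorolComponentsAreCMC}, i.e. of the system \eqref{eq:biharmonicitySubmanifoldTorusR}, by taking the second factor $M_2^{m_2}$ to be the full sphere $\mathbb{S}^{n_2}(r_2)$ with $\varphi_2$ its identity map. The key observation is that $\mathbb{S}^{n_2}(r_2)$, viewed as an immersion into itself, is totally geodesic, so its second fundamental form $B$ vanishes identically; hence its tension field is zero, $\tau(\varphi_2)=0$, and, being harmonic, it is in particular biharmonic, $\tau_2(\varphi_2)=0$. Moreover, since $M_2$ is the entire sphere, its dimension coincides with $n_2$, that is $m_2=n_2$. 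These three facts are all that the proof needs.

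First I would substitute $\tau(\varphi_2)=0$, $\tau_2(\varphi_2)=0$ and $m_2=n_2$ into the four lines of \eqref{eq:biharmonicitySubmanifoldTorusR}. The first line reduces to ``$|\tau(\varphi_1)|>0$ or $m_1/r_1^2\neq n_2/r_2^2$'', because the disjunct $|\tau(\varphi_2)|>0$ can never hold; this is exactly the first condition of the corollary. The second line is unchanged except for the replacement of $m_2$ by $n_2$, giving the second condition. The third line of \eqref{eq:biharmonicitySubmanifoldTorusR}, namely $\tau_2(\varphi_2)=2 r_1^2\left(m_2/r_2^2-m_1/r_1^2\right)\tau(\varphi_2)$, collapses to the trivial identity $0=0$ and therefore disappears entirely.

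It then remains to treat the fourth line. After setting $\tau(\varphi_2)=0$ and $m_2=n_2$ it reads
\[
\frac{|\tau(\varphi_1)|^2}{r_1^2}+\left(r_2^2-r_1^2\right)\left(\frac{n_2}{r_2^2}-\frac{m_1}{r_1^2}\right)^2=0.
\]
Isolating the first summand, replacing $r_2^2-r_1^2$ by $-(r_1^2-r_2^2)$, and multiplying through by $r_1^2$ yields
\[
|\tau(\varphi_1)|^2=r_1^2\left(r_1^2-r_2^2\right)\left(\frac{n_2}{r_2^2}-\frac{m_1}{r_1^2}\right)^2,
\]
which is precisely the third condition of the corollary. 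Thus, under the present specialization, the system \eqref{eq:biharmonicitySubmanifoldTorusR} is equivalent to the stated three-condition system, and the claimed equivalence between ``$M_1\times\mathbb{S}^{n_2}(r_2)$ is proper biharmonic in $\mathbb{S}^n$'' and those three conditions follows. I expect no genuine obstacle: the argument is pure substitution, and the only point requiring care is the sign bookkeeping when rewriting $r_2^2-r_1^2$ in the last equation, together with the elementary verification that the totally geodesic full sphere indeed forces $\tau(\varphi_2)=\tau_2(\varphi_2)=0$.
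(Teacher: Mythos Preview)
Your proposal is correct and is exactly the intended argument: the paper states this corollary without proof as an immediate specialization of the system \eqref{eq:biharmonicitySubmanifoldTorusR} obtained by taking $\varphi_2$ to be the identity of $\mathbb S^{n_2}(r_2)$, so that $m_2=n_2$ and $\tau(\varphi_2)=\tau_2(\varphi_2)=0$. Your handling of the substitution and the sign in the last equation is accurate.
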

	
	We note that, from equation \eqref{eq:biharmonicitySubmanifoldTorusR}, each immersion $\varphi_i$ satisfies an equation of type $\tau_2 (\varphi_i) = \lambda_i \tau (\varphi_i)$, $i \in \{ 1, 2 \}$, with $\lambda_1 \lambda_2 < 0$. Therefore, in order to answer to our question, we will study the so called $\lambda$-biharmonic submanifolds.
	
	\section{$\lambda$-biharmonic submanifolds}
	
	In this section we will present some characterization results of $\lambda$-biharmonic submanifolds which will be useful later. First, we define $\lambda$-biharmonic submanifolds. An immersion $\varphi : M^m \to N^n$ is said to be $\lambda$-biharmonic if $\tau_2 (\varphi) = \lambda \tau (\varphi)$, where $\lambda$ is a real constant.
	
	We know that the equation $\tau_2 (\varphi) = \lambda \tau (\varphi)$ has already appeared in the literature (see \cite{ArroyoGarayMencia1998}, \cite{ArvanitoyeorgosKaimakamisMagid2009}, \cite{Branding2020}, \cite{Chen1988}, \cite{ChenBook2015}, \cite{FerrandezLucas1991} \cite{Garay1994}), but it was not studied in our context of submanifolds in Euclidean spheres.
	
	\begin{theorem}\label{th:CharacterizationLambdaBiharmonicity}
		An immersion $\varphi : M^m \rightarrow \mathbb S^n (r)$ is $\lambda$-biharmonic if and only if
		\begin{equation}\label{eq:LambdaBiharmonicCharacterization}
			\left\{
			\begin{array}{l}
				\Delta^\perp H + \trace B(\cdot, A_H(\cdot)) + \left( \lambda - \frac m {r^2} \right) H = 0\\[5pt]
				4 \trace A_{\nabla^\perp_{(\cdot)} H} (\cdot) + m \grad |H|^2 = 0
			\end{array}
			\right..
		\end{equation}
	\end{theorem}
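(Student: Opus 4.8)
The plan is to start from the definition of $\lambda$-biharmonicity, $\tau_2(\varphi) = \lambda \tau(\varphi)$, and compute the bitension field $\tau_2(\varphi)$ explicitly for an isometric immersion into a constant sectional curvature space, namely $\mathbb{S}^n(r)$ whose sectional curvature is $1/r^2$. Recall that for a submanifold the tension field is $\tau(\varphi) = mH$, where $H$ is the mean curvature vector. The strategy is to expand the two terms in \eqref{eq:Tau2Immersions} — the rough Laplacian $-\Delta^\varphi \tau(\varphi)$ and the curvature term $-\trace R^N(d\varphi(\cdot), \tau(\varphi))d\varphi(\cdot)$ — and then decompose the resulting equation into its components tangent and normal to $M$. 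The condition $\tau_2(\varphi) = \lambda \tau(\varphi) = \lambda m H$ is purely normal, so the tangent part of $\tau_2(\varphi)$ must vanish, while the normal part must equal $\lambda m H$; these two conditions will yield the two equations in \eqref{eq:LambdaBiharmonicCharacterization} after dividing through by the common factor $m$.

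First I would treat the curvature term. Because $\mathbb{S}^n(r)$ has constant sectional curvature $1/r^2$, the curvature tensor has the standard form $R^N(X,Y)Z = \frac{1}{r^2}\big(\langle Y,Z\rangle X - \langle X,Z\rangle Y\big)$. Plugging $Z = \tau(\varphi) = mH$ and tracing over an orthonormal tangent frame $\{e_i\}$, the sum $\trace R^N(d\varphi(\cdot), mH)d\varphi(\cdot) = \sum_i R^N(e_i, mH)e_i$ collapses, since $H$ is normal, to $-\frac{m}{r^2}\cdot mH$ (only the term $\langle e_i, e_i\rangle H$ survives and $\langle e_i, H\rangle = 0$). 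Thus this term contributes the clean factor $\frac{m}{r^2}H$ that appears in the first equation. Next, I would expand $\Delta^\varphi \tau(\varphi) = m\,\Delta^\varphi H$ using the Weitzenböck-type splitting of the rough Laplacian acting on a normal section into its normal Laplacian $\Delta^\perp H$ and tangential correction terms built from the shape operator. Writing $\Delta^\varphi H = \sum_i(\nabla^\varphi_{e_i}\nabla^\varphi_{e_i}H - \nabla^\varphi_{\nabla_{e_i}e_i}H)$ and repeatedly applying the Gauss and Weingarten formulas $\nabla^\varphi_X H = -A_H X + \nabla^\perp_X H$ and $\nabla^\varphi_X Y = \nabla_X Y + B(X,Y)$, one collects the normal contributions into $-\Delta^\perp H - \trace B(\cdot, A_H(\cdot))$ and the tangential contributions into a term proportional to $\trace A_{\nabla^\perp_{(\cdot)}H}(\cdot)$ and $\grad|H|^2$.

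The separation into normal and tangential parts is the crux of the calculation. The normal part of $-\Delta^\varphi\tau(\varphi) - \trace R^N(\cdots) = \lambda mH$ gives, after dividing by $m$, the first equation $\Delta^\perp H + \trace B(\cdot, A_H(\cdot)) + (\lambda - m/r^2)H = 0$, since the right-hand side $\lambda mH$ is normal and contributes the $\lambda H$ summand. The tangential part must vanish (there is no tangential component on the right-hand side), and it produces the second equation $4\trace A_{\nabla^\perp_{(\cdot)}H}(\cdot) + m\grad|H|^2 = 0$. \emph{The main obstacle} I anticipate is the bookkeeping in expanding $\Delta^\varphi H$ correctly: in particular, carefully identifying which cross-terms from the second covariant derivative land in the tangent bundle versus the normal bundle, and confirming that the tangential terms assemble with the exact numerical coefficient $4$ in front of $\trace A_{\nabla^\perp_{(\cdot)}H}(\cdot)$. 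The factor of $4$ arises from combining the tangential part of $\nabla^\varphi_{e_i}(\nabla^\perp_{e_i}H)$, which gives $-A_{\nabla^\perp_{e_i}H}e_i$, with the tangential part of $\nabla^\varphi_{e_i}(-A_H e_i)$, which produces terms recombining (via the symmetry of $B$ and the identity $\langle \nabla^\perp_X H, B(Y,Z)\rangle$-type manipulations) into additional copies of the shape-operator trace and of $\grad|H|^2$; this is essentially the standard computation underlying the classical characterization of biharmonic submanifolds (the case $\lambda = 0$), and the present result is the expected one-parameter extension obtained by carrying the $\lambda\tau(\varphi)$ term through the normal component. Once the coefficients are verified, the equivalence follows immediately since the full vector equation is equivalent to the simultaneous vanishing of its tangential and normal parts.
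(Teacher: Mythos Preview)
Your proposal is correct and follows essentially the same approach as the paper: compute the curvature term using the constant sectional curvature of $\mathbb S^n(r)$, expand the rough Laplacian of $H$ into its normal part $\Delta^\perp H + \trace B(\cdot,A_H(\cdot))$ and tangential part $2\trace A_{\nabla^\perp_{(\cdot)}H}(\cdot) + \tfrac m2 \grad|H|^2$, and then split $\tau_2(\varphi)=\lambda\tau(\varphi)$ into normal and tangential components. The paper's proof is terser (it simply quotes the decomposition of $\Delta^\varphi H$ as known from the biharmonic case), but the underlying computation and logic are identical; note in particular that the coefficient~$4$ you were worried about arises simply by clearing the factor $\tfrac12$ from the tangential identity.
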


	\begin{proof}
		We consider $\{ X_i \}_{i \in \overline {1, m}}$ a local orthonormal frame field tangent to $M$ and we have
		$$
		\trace R^{\mathbb S^n(r)}(d\varphi (\cdot), H) d\varphi (\cdot) = - \frac m {r^2} H.
		$$
		As in the case of biharmonic immersions, we can prove that
		$$
		\Delta H = \Delta^\perp H + \trace B(\cdot, A_H(\cdot)) + 2\trace A_{\nabla^\perp _{(\cdot)} H} (\cdot) + \frac m 2 \grad |H|^2.
		$$
		Now, replacing in \eqref{eq:Tau2Immersions} we get \eqref{eq:LambdaBiharmonicCharacterization}.
	\end{proof}
	
	In the case of hypersurfaces, we can define the mean curvature function as being $f = \trace A / m$. Using this definition of the mean curvature, we can state the above result in the following way

	\begin{corollary}
		Let $M^m$ be a hypersurface in $\mathbb S^{m+1} (r)$, the sphere of radius $r$. Then $M$ is $\lambda$-biharmonic in $\mathbb S^{m+1} (r)$ if and only if
		\begin{equation}\label{eq:HypersurfacesLambdaBihamronicCharacterization}
			\left\{
			\begin{array}{l}
				\Delta f = \left( \frac m {r^2} - \lambda - |A|^2 \right) f\\[5pt]
				A(\grad f) = - \frac m 2 f \grad f
			\end{array}
			\right..
		\end{equation}
		Moreover, if $M$ is a non-minimal $CMC$ hypersurface, then $M$ is proper $\lambda$-biharmonic in $\mathbb S^{m+1} (r)$ if and only if
		\begin{equation}\label{eq:CMCHypersurfacesLambdaBihamronicCharacterization}
			\lambda < \frac m {r^2} \quad \text{and} \quad |A|^2 = \frac m {r^2} - \lambda.
		\end{equation}
	\end{corollary}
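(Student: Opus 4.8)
The plan is to obtain both characterizations by specializing Theorem~\ref{th:CharacterizationLambdaBiharmonicity} to codimension one. Fix a local unit normal field $\eta$ on $M^m \subset \mathbb{S}^{m+1}(r)$ and write $A = A_\eta$ for the corresponding shape operator. Since the normal bundle is one-dimensional and $\langle \eta, \eta \rangle = 1$, we have $\nabla^\perp_X \eta = 0$ for every tangent $X$. Consequently the mean curvature vector is $H = f\eta$ with $f = \trace A / m$, and $|H|^2 = f^2$; thus $M$ is $CMC$ precisely when $f$ is locally constant and minimal precisely when $f \equiv 0$.

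First I would rewrite each term of \eqref{eq:LambdaBiharmonicCharacterization} in terms of $f$, $A$ and $\eta$. Because $\nabla^\perp_X H = (Xf)\eta$, the rough Laplacian on the normal bundle collapses to the scalar Laplacian, $\Delta^\perp H = (\Delta f)\eta$; writing the second fundamental form as $B(X,Y) = \langle AX, Y\rangle \eta$ gives $A_H = fA$ and $\trace B(\cdot, A_H(\cdot)) = f|A|^2\eta$. Substituting into the first equation of \eqref{eq:LambdaBiharmonicCharacterization} and reading off the $\eta$-component yields $\Delta f = \left( m/r^2 - \lambda - |A|^2 \right) f$. For the second equation, $\nabla^\perp_{X_i} H = (X_i f)\eta$ gives $A_{\nabla^\perp_{X_i} H} = (X_i f)\,A$, hence $\trace A_{\nabla^\perp_{(\cdot)} H}(\cdot) = A(\grad f)$, while $\grad |H|^2 = 2f\grad f$; together these produce $A(\grad f) = -\tfrac{m}{2} f\grad f$. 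This establishes \eqref{eq:HypersurfacesLambdaBihamronicCharacterization}.

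For the $CMC$ statement I would feed $f \equiv \text{const}$ into \eqref{eq:HypersurfacesLambdaBihamronicCharacterization}. Then $\grad f = 0$ renders the second equation vacuous, and $\Delta f = 0$ reduces the first to $\left( m/r^2 - \lambda - |A|^2 \right) f = 0$. Since $M$ is assumed non-minimal, $f \neq 0$, so $\lambda$-biharmonicity is equivalent to $|A|^2 = m/r^2 - \lambda$. Non-minimality also forces $A \neq 0$, i.e. $|A|^2 > 0$, so this equality can hold only when $\lambda < m/r^2$; conversely the two conditions return a non-minimal (hence proper) $\lambda$-biharmonic hypersurface, which yields \eqref{eq:CMCHypersurfacesLambdaBihamronicCharacterization}.

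There is no serious obstacle here, as the result is essentially a direct translation; the steps deserving care are the vanishing $\nabla^\perp \eta = 0$, which is what lets the normal rough Laplacian become the scalar Laplacian with the same sign convention appearing in \eqref{eq:HypersurfacesLambdaBihamronicCharacterization}, and the clean separation of \eqref{eq:LambdaBiharmonicCharacterization} into its normal (proportional to $\eta$) and tangential components.
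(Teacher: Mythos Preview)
Your argument is correct and follows essentially the same route as the paper: you specialize the general characterization \eqref{eq:LambdaBiharmonicCharacterization} to codimension one by computing $H=f\eta$, $\Delta^\perp H=(\Delta f)\eta$, $\trace B(\cdot,A_H(\cdot))=f|A|^2\eta$, $\trace A_{\nabla^\perp_{(\cdot)}H}(\cdot)=A(\grad f)$ and $\grad|H|^2=2f\grad f$, exactly as in the paper's proof. Your treatment of the $CMC$ clause (and in particular the observation that $f\neq 0\Rightarrow A\neq 0\Rightarrow |A|^2>0$, forcing $\lambda<m/r^2$) is more explicit than the paper's, which simply asserts that the conclusion follows.
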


	\begin{proof}
		Without loss of generality, we can suppose that $M$ is oriented and $\eta$ is a unit globally defined section in the normal bundle $NM^m$.
		
		Since 
		\begin{align*}
			& H = f \eta, \quad \Delta^\perp H = (\Delta f) \eta, \quad \trace B(\cdot, A_H (\cdot)) = f |A|^2 \eta,\\
			& m \grad |H|^2 = 2m f \grad f, \quad \trace A_{\nabla^\perp_{(\cdot)} H} (\cdot) = A(\grad f),
		\end{align*}
		from \eqref{eq:LambdaBiharmonicCharacterization} the conclusion follows.
	\end{proof}

	In the special case of $PMC$ submanifolds, we obtain a simpler form of Theorem \ref{th:CharacterizationLambdaBiharmonicity}.
	
	\begin{theorem}
		Let $M^m$ be a $PMC$ submanifold in $\mathbb S^n (r)$. Then $M$ is $\lambda$-biharmonic in $\mathbb S^n (r)$ if and only if
		\begin{equation}\label{eq:PMC1LambdaBiharmonicCHaracterization}
			\trace B(\cdot, A_H(\cdot)) = \left( \frac m {r^2} - \lambda \right) H
		\end{equation}
		or, equivalently,
		\begin{equation}\label{eq:PMC2LambdaBiharmonicCHaracterization}
			\left\{
			\begin{array}{l}
				|A_H|^2 = \left( \frac m {r^2} - \lambda \right) |H|^2\\[5pt]
				\langle A_H, A_\xi \rangle = 0, \quad \forall \xi \in C(NM)\ \text{and}\ \xi \perp H
			\end{array}
			\right..
		\end{equation}
	\end{theorem}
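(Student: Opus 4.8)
The plan is to read off this result directly from the general characterization in Theorem~\ref{th:CharacterizationLambdaBiharmonicity}, specializing equation \eqref{eq:LambdaBiharmonicCharacterization} to the $PMC$ case, and then to convert the resulting single normal-vector equation into its scalar components. First I would exploit $\nabla^\perp H = 0$ to dispose of the second equation of \eqref{eq:LambdaBiharmonicCharacterization}: parallelism of $H$ gives $\nabla^\perp_{(\cdot)} H = 0$, hence $\trace A_{\nabla^\perp_{(\cdot)} H}(\cdot) = 0$, and since $PMC$ forces $|H|$ to be constant we also have $\grad |H|^2 = 0$, so that equation is automatically satisfied. For the same reason the rough Laplacian annihilates $H$, i.e.\ $\Delta^\perp H = 0$, so the first equation of \eqref{eq:LambdaBiharmonicCharacterization} collapses to the purely algebraic relation $\trace B(\cdot, A_H(\cdot)) + (\lambda - m/r^2) H = 0$, which is exactly \eqref{eq:PMC1LambdaBiharmonicCHaracterization}. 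This already yields the equivalence of $\lambda$-biharmonicity with \eqref{eq:PMC1LambdaBiharmonicCHaracterization}.

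For the equivalence of \eqref{eq:PMC1LambdaBiharmonicCHaracterization} with the system \eqref{eq:PMC2LambdaBiharmonicCHaracterization}, the key is the trace identity obtained by pairing with an arbitrary normal section $\xi$. Fixing a local orthonormal frame $\{X_i\}_{i \in \overline{1,m}}$ and using $\langle B(X,Y), \xi \rangle = \langle A_\xi X, Y \rangle$, I would compute
$$
\langle \trace B(\cdot, A_H(\cdot)), \xi \rangle = \sum_i \langle B(X_i, A_H X_i), \xi \rangle = \sum_i \langle A_\xi X_i, A_H X_i \rangle = \trace(A_\xi A_H) = \langle A_H, A_\xi \rangle.
$$
Since \eqref{eq:PMC1LambdaBiharmonicCHaracterization} is an equality of normal vector fields, it is equivalent to the scalar identity $\langle A_H, A_\xi \rangle = (m/r^2 - \lambda)\,\langle H, \xi \rangle$ holding for every $\xi \in C(NM)$.

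I would then split this identity along the orthogonal decomposition of the normal space into the line spanned by $H$ and its complement $H^\perp$. Taking $\xi = H$ gives $\langle A_H, A_H \rangle = |A_H|^2 = (m/r^2 - \lambda)\,|H|^2$, the first line of \eqref{eq:PMC2LambdaBiharmonicCHaracterization}; taking $\xi \perp H$ gives $\langle A_H, A_\xi \rangle = 0$, the second line. Conversely, since these two families of test sections span $C(NM)$ pointwise and both sides of the scalar identity are linear in $\xi$, the two conditions together recover that identity for all $\xi$, and hence the vector equation \eqref{eq:PMC1LambdaBiharmonicCHaracterization}.

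I do not expect a genuinely hard step here: the statement is essentially a bookkeeping specialization of Theorem~\ref{th:CharacterizationLambdaBiharmonicity}. The only points requiring a little care are recording that $\Delta^\perp H = 0$ for a $PMC$ submanifold, so that the first equation becomes algebraic, and the trace identity $\langle \trace B(\cdot, A_H(\cdot)), \xi \rangle = \trace(A_H A_\xi)$ that drives the decomposition. The degenerate minimal case $H = 0$ makes every term vanish and is therefore covered trivially.
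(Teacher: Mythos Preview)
Your proposal is correct and follows essentially the same approach as the paper's proof: both specialize Theorem~\ref{th:CharacterizationLambdaBiharmonicity} using $\nabla^\perp H = 0$ to kill $\Delta^\perp H$, $\grad|H|^2$, and the $A_{\nabla^\perp_{(\cdot)}H}$ term, and then pair the resulting normal equation with $H$ and with $\xi\perp H$. Your write-up is in fact more explicit than the paper's, spelling out the trace identity $\langle \trace B(\cdot,A_H(\cdot)),\xi\rangle = \langle A_H,A_\xi\rangle$ and the converse direction via linearity in $\xi$, which the paper leaves implicit.
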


	\begin{proof}
		Since $\nabla^\perp H = 0$, we obtain
		$$
		\grad |H|^2 = 0, \quad \Delta^\perp H = 0, \quad \trace A_{\nabla^\perp_{(\cdot)} H} (\cdot) = 0.
		$$
		Therefore, \eqref{eq:LambdaBiharmonicCharacterization} is equivalent to the following relation in the normal bundle of $M$
		\begin{equation}\label{eq:PMC1LambdaBiharmonicCHaracterizationInTheProof}
			\trace B(\cdot, A_H(\cdot)) = \left( \frac m {r^2} - \lambda \right) H.
		\end{equation} 
		Further, taking in \eqref{eq:PMC1LambdaBiharmonicCHaracterizationInTheProof} the inner product with $H$ and with $\xi \in C(NM)$, $\xi \perp H$, we obtain the conclusion.
	\end{proof}
	
	\smallskip
	
	\subsection{Examples of $\lambda$-biharmonic hypersurfaces}
	
	In the following we will present some simple examples of $\lambda$-biharmonic hypersurfaces in $\mathbb S^n (r)$ which will be useful for our study.  The first proposition provides umbilical examples of $\lambda$-biharmonic hypersurfaces in spheres.
	
	\begin{proposition}
		Let $\mathbb S^m (a) \times \{b\} \equiv \mathbb S^m (a)$ be a small hypersphere in $\mathbb S^{m+1} (r)$ and consider $\iota : \mathbb S^m (a) \rightarrow \mathbb S^{m+1} (r)$ the canonical inclusion, where $a^2+b^2=r^2$. Then $\mathbb S^m (a)$ is proper $\lambda$-biharmonic in $\mathbb S^{m+1}$ if and only if $\lambda < m / r^2$ and
		$$
		a = \sqrt { \frac {m r^2} {2m - \lambda r^2}}.
		$$
		Moreover, in this case $|H|^2 = 1/r^2 - \lambda / m$.
	\end{proposition}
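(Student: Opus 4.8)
The plan is to reduce the statement to the characterization \eqref{eq:CMCHypersurfacesLambdaBihamronicCharacterization} of proper $\lambda$-biharmonic non-minimal $CMC$ hypersurfaces. Since $\iota$ is a totally umbilical inclusion, the only nontrivial ingredient is the value of $|A|^2$; once this is computed, everything else is algebraic.

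First I would fix the extrinsic picture. Realizing $\mathbb S^{m+1}(r) \subset \mathbb R^{m+2}$, I write the position vector of a point of $\mathbb S^m(a) \times \{b\}$ as $(y, b)$, with $y \in \mathbb R^{m+1}$, $|y| = a$ and $a^2 + b^2 = r^2$; a tangent vector to the hypersphere then has the form $(v, 0)$ with $v \perp y$. The unit normal $\eta$ of $\iota$ inside $\mathbb S^{m+1}(r)$ is obtained by projecting $e_{m+2}$ onto $T\mathbb S^{m+1}(r)$ and normalizing, which gives $\eta = \left( -\tfrac{b}{ar}\, y, \tfrac{a}{r} \right)$; a short check confirms $|\eta| = 1$, $\eta \perp (y,b)$ and $\eta \perp (v,0)$.

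Next I would compute the shape operator using the flat connection $\bar\nabla$ of $\mathbb R^{m+2}$. Since $a, b, r$ are constant and $\bar\nabla_X (y,b) = (v,0) = X$ for $X = (v,0)$, we obtain $\bar\nabla_X \eta = -\tfrac{b}{ar}\, X$, which is already tangent to $M$, so that $A_\eta = \tfrac{b}{ar}\,\id$. In particular $\iota$ is $CMC$ with $|A|^2 = m\, b^2/(a^2 r^2) = m\,(r^2 - a^2)/(a^2 r^2)$, and it is non-minimal exactly when $b \neq 0$, i.e. $a < r$.

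Finally I would invoke \eqref{eq:CMCHypersurfacesLambdaBihamronicCharacterization}: $\iota$ is proper $\lambda$-biharmonic if and only if $\lambda < m/r^2$ and $|A|^2 = m/r^2 - \lambda$. Substituting the value of $|A|^2$ and solving the resulting linear equation in $1/a^2$ yields $m/a^2 = (2m - \lambda r^2)/r^2$, that is, $a = \sqrt{m r^2/(2m - \lambda r^2)}$; the inequality $\lambda < m/r^2$ makes $2m - \lambda r^2 > m > 0$, so $a$ is real and $a < r$, consistent with properness. The last claim is immediate from umbilicity, since $A_\eta = \tfrac{b}{ar}\id$ gives $|A|^2 = m|H|^2$, whence $|H|^2 = |A|^2/m = 1/r^2 - \lambda/m$. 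The computation is essentially routine; the only step needing care is the determination of $\eta$ and of the principal curvature $b/(ar)$, as a sign or normalization slip there would propagate into the wrong radius, so I expect no genuine obstacle beyond this bookkeeping.
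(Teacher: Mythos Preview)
Your proof is correct and follows essentially the same approach as the paper: compute the unit normal of the small hypersphere, observe that the shape operator is a multiple of the identity with $|A|^2 = m b^2/(a^2 r^2)$, and then plug into \eqref{eq:CMCHypersurfacesLambdaBihamronicCharacterization} to solve for $a$. The only cosmetic difference is that the paper chooses the opposite orientation for $\eta$ (so its $A = -\tfrac{b}{ar}\id$), which of course does not affect $|A|^2$ or the conclusion.
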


	\begin{proof}
		It is not difficult to check that 
		$$
		\eta_p = \frac 1 r \left( \frac b a x^1, \ldots, \frac b a x^{m+1}, -a\right), \quad \forall p = \left( x^1, \ldots, x^{m+1}, b \right) \in \mathbb S^m (a),
		$$
		defines a unit section in the normal bundle $N\mathbb S^m (a)$ and 
		\begin{equation*}
			A = - \frac 1 r \frac b a \id.
		\end{equation*}
		Therefore, $f = - (1 / r) (b / a)$ is constant, $|A|^2 = (m / r^2) (b^2 / a^2)$ and \eqref{eq:CMCHypersurfacesLambdaBihamronicCharacterization} is equivalent to 
		$$
		\frac m {r^2} \frac {b^2} {a^2} = \frac m {r^2} - \lambda.
		$$
		In particular, we obtain $\lambda < m / r^2$. Further, the above relation can be written as
		\begin{equation*}
			\frac 1 {a^2} = \frac 2 {r^2} - \frac \lambda m
		\end{equation*}
		and from here the conclusion follows.
	\end{proof}
	
	The next result provides $CMC$ examples of $\lambda$-biharmonic hypersurfaces in spheres with two constant distinct principal curvatures ($A$ is still parallel).

	\begin{proposition}\label{th:ExampleProductSphereCircle}
		Consider the extrinsic product $\iota : \mathbb S^{m-1} (r_1) \times \mathbb S^1 (r_2) \rightarrow \mathbb S^{m+1} (r)$, where $\iota$ is the canonical inclusion, $r_1^2 + r_2^2 = r^2$, $r_1, r_2 \in (0, r)$ and $m \geq 2$. Then $\iota$ is proper $\lambda$-biharmonic if and only if one of the following three cases holds
		\begin{enumerate}
			\item $m > 2$, $\lambda =  (m - 2\sqrt{m - 1}) / {r^2}$ and
			\begin{align*}
				& r_1^2 = \frac {m - 1 - \sqrt {m - 1}} {m - 2} r^2,\quad r_2^2 = \frac { \sqrt {m - 1} - 1} {m - 2} r^2;
			\end{align*}
			\item $m \geq 2$, $\lambda < (m - 2\sqrt{m - 1}) / {r^2}$, $\lambda \neq 0$ and
			\begin{align*}
				& r_1^2 = \frac {3m - 2 - \lambda r^2 \mp \sqrt{\left( m - \lambda r^2 \right)^2 - 4(m - 1)}} {2 \left( 2m - \lambda r^2 \right)} r^2,\\
				& r_2^2 = \frac {m + 2 - \lambda r^2 \pm \sqrt{\left( m - \lambda r^2 \right)^2 - 4(m - 1)}} {2 \left( 2m - \lambda r^2 \right)} r^2 ;
			\end{align*}
			\item or $m > 2$, $\lambda = 0$ and $r_1^2 = r_2^2 = {r^2} / 2$.
		\end{enumerate}
	\end{proposition}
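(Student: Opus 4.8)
The plan is to realize $M = \mathbb S^{m-1}(r_1) \times \mathbb S^1(r_2)$ as a hypersurface of $\mathbb S^{m+1}(r)$ with constant principal curvatures, so that the simple $CMC$ criterion \eqref{eq:CMCHypersurfacesLambdaBihamronicCharacterization} applies, and then to convert the single scalar equation $|A|^2 = m/r^2 - \lambda$ into a quadratic in the radii whose discriminant dictates the three cases. First I would split $\mathbb R^{m+2} = \mathbb R^m \oplus \mathbb R^2$ compatibly with the product, write a point of $M$ as $(x, y)$ with $|x| = r_1$, $|y| = r_2$, and check that $\eta = \frac 1 r \left( \frac {r_2} {r_1} x, -\frac {r_1} {r_2} y \right)$ is a unit section of the normal bundle of $M$ in $\mathbb S^{m+1}(r)$. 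Differentiating $\eta$ along the tangent directions of the two factors then shows that $A$ is diagonal with the constant principal curvatures $\kappa_1 = -r_2 / (r r_1)$ of multiplicity $m-1$ and $\kappa_2 = r_1 / (r r_2)$ of multiplicity $1$; in particular $M$ is $CMC$ (and $A$ is parallel, since the eigendistributions are the tangent distributions of the factors), and $M$ is minimal exactly when $r_1^2 = (m-1) r_2^2$.

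Next I would compute $|A|^2 = (m-1) r_2^2 / (r^2 r_1^2) + r_1^2 / (r^2 r_2^2)$ and impose the criterion: $M$ is proper $\lambda$-biharmonic if and only if it is non-minimal, $\lambda < m/r^2$, and $|A|^2 = m/r^2 - \lambda$. Setting $t = r_1^2 / r^2 \in (0,1)$, so that $r_2^2 / r^2 = 1 - t$, and writing $\mu = \lambda r^2$, the last equation becomes, after clearing denominators, the quadratic $(2m - \mu) t^2 - (3m - 2 - \mu) t + (m-1) = 0$. A direct expansion gives the discriminant $(m - \mu)^2 - 4(m-1) = (m - \lambda r^2)^2 - 4(m-1)$, which is exactly the radical appearing in case (2); solving for $t$ and then for $1 - t$ yields the stated expressions for $r_1^2$ and $r_2^2$.

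I would then analyze when this quadratic has admissible roots. Since $p(0) = m - 1 > 0$, $p(1) = 1 > 0$, the leading coefficient $2m - \mu > 0$ (using $\mu < m$), and the vertex abscissa $(3m - 2 - \mu) / (2(2m - \mu))$ lies in $(0,1)$, every real root automatically lies in $(0,1)$; hence admissibility is governed solely by the sign of the discriminant. Its nonnegativity forces $\mu \le m - 2\sqrt{m-1}$ or $\mu \ge m + 2\sqrt{m-1}$, and the latter is incompatible with $\mu < m$, leaving $\lambda \le (m - 2\sqrt{m-1}) / r^2$. The boundary $\lambda = (m - 2\sqrt{m-1}) / r^2$ produces the double root $t = \sqrt{m-1} / (\sqrt{m-1} + 1)$, which simplifies to the radii of case (1), while the strict inequality together with $\lambda \ne 0$ gives the two distinct roots of case (2).

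The delicate point, and the step I expect to be the main obstacle, is the separate treatment of $\lambda = 0$. A short computation shows that $t = (m-1)/m$, i.e. the minimal Clifford-type torus, solves the quadratic precisely when $\mu = 0$; thus for $\lambda \ne 0$ neither root is minimal, so both are genuinely proper, whereas for $\lambda = 0$ one of the two roots is the minimal hypersurface and must be discarded, leaving only $t = 1/2$, that is $r_1^2 = r_2^2 = r^2/2$. This surviving solution is non-minimal, hence proper, exactly when $(m-1)/m \ne 1/2$, i.e. $m > 2$, in agreement with Corollary \ref{th:CorollaryBiharmonicityProductOfSpheres}; the same degeneration at $m = 2$ collapses the boundary double root onto the minimal torus, which is what forces the hypothesis $m > 2$ in case (1). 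Carefully bookkeeping these minimality exclusions and the $m = 2$ coincidences is precisely what separates the three cases.
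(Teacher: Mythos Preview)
Your argument is correct and follows essentially the same route as the paper's proof: compute the two constant principal curvatures, apply the $CMC$ criterion \eqref{eq:CMCHypersurfacesLambdaBihamronicCharacterization}, reduce to a quadratic with discriminant $(m-\lambda r^2)^2-4(m-1)$, and separate cases by its sign together with the non-minimality constraint. The only cosmetic difference is your substitution $t=r_1^2/r^2$ in place of the paper's $\alpha=r_2^2/r_1^2$, which has the mild advantage that the roots of your quadratic are already the expressions for $r_1^2/r^2$ in case~(2).
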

	\begin{proof}
		We can check that
		$$
		\eta_p = \frac 1 r \left( \frac {r_2} {r_1} x^1, \ldots, \frac {r_2} {r_1} x^m, -\frac {r_1} {r_2} x^{m + 1}, - \frac {r_1} {r_2} x^{m + 2} \right),
		$$
		for any $p = \left( x^1, \ldots, x^{m+2} \right) \in \mathbb S^{m-1} (r_1) \times \mathbb S^1 (r_2)$, is a unit section in the normal bundle and 
		\begin{equation*}
			AX = \frac 1 r \left( - \frac {r_2} {r_1} X_1, \frac {r_1} {r_2} X_2 \right), \quad \forall X = (X_1, X_2).
		\end{equation*}
		Therefore
		\begin{align*}
			f =& \frac 1 m (\trace A) = \frac 1 m \left( \sum_{i=1} ^{m-1} \langle AX_{1,i}, X_{1,i} \rangle + \langle AX_2, X_2 \rangle \right)\\
			  =& \frac 1 {mr} \left( - \frac {r_2} {r_1} (m - 1) + \frac {r_1} {r_2} \right)
		\end{align*}
		and 
		\begin{equation*}
			|A|^2 = \sum_{i=1} ^{m - 1} |AX_{1,i}|^2 + |AX_2|^2 = \frac 1 {r^2} \left( \frac {r_2^2} {r_1^2} (m - 1) + \frac {r_1^2} {r_2^2} \right).
		\end{equation*}
		Taking into account \eqref{eq:CMCHypersurfacesLambdaBihamronicCharacterization}, the inclusion $\iota$ is proper $\lambda$-biharmonic if and only if
		\begin{equation*}
			\left\{
			\begin{array}{l}
				- \frac {r_2} {r_1} (m - 1) + \frac {r_1} {r_2} \neq 0\\[5pt]
				\frac {r_2^2} {r_1^2} (m - 1) + \frac {r_1^2} {r_2^2} = m - \lambda r^2
			\end{array}
			\right..
		\end{equation*}
		Denoting $\alpha = r_2^2 / r_1^2$ the above system is equivalent to
		\begin{equation}\label{eq:SecondDegreeEquationExample2}
			\left\{
			\begin{array}{l}
				\alpha (m - 1) \neq 1\\[5pt]
				(m - 1) \alpha^2 - \left( m - \lambda r^2 \right) \alpha + 1 = 0
			\end{array}
			\right..
		\end{equation}
		The discriminant corresponding to the second equation from \eqref{eq:SecondDegreeEquationExample2} must be non-negative, that is
		$$
		\Delta = \left( m - \lambda r^2 \right)^2 - 4(m - 1) \geq 0.
		$$
		Case I: If $\Delta = 0$, we get that 
		\begin{equation*}
			\lambda = \frac {m - 2\sqrt{m - 1}}{r^2} \in \left(0, \frac m {r^2} \right)
		\end{equation*}
		and thus 
		\begin{equation*}
			\alpha = \frac {m - \lambda r^2} {2(m - 1)} = \frac 1 {\sqrt{m - 1}} > 0,
		\end{equation*}
		i.e. $r_1^2 = \sqrt{m - 1} r_2^2$. Since $r_1^2 + r_2^2 = r^2$, we obtain that 
		\begin{equation*}
			r_1^2 = \frac {\sqrt {m - 1}} {1 + \sqrt {m - 1}} r^2 \quad \text{and} \quad r_2^2 = \frac {1} {1 + \sqrt{m - 1}} r^2.
		\end{equation*}
		The first relation of \eqref{eq:SecondDegreeEquationExample2} is equivalent to $m \neq 2$.\\
		Case II: If $\Delta > 0$, we obtain that 
		\begin{equation*}
			\lambda < \frac {m - 2\sqrt{m - 1}} {r^2}.
		\end{equation*}
		In this case
		\begin{align*}
			& \alpha_1 = \frac {m - \lambda r^2 - \sqrt{\left( m - \lambda r^2 \right)^2 - 4(m - 1)}} {2(m - 1)} > 0,\\
			& \alpha_2 = \frac {m - \lambda r^2 + \sqrt{\left( m - \lambda r^2 \right)^2 - 4(m - 1)}} {2(m - 1)} > 0.
		\end{align*}
		Thus,
		\begin{equation*}
			r_1^2 = \frac {2(m - 1)} {3m - 2 - \lambda r^2 \pm \sqrt{\left(m - \lambda r^2 \right)^2 - 4(m - 1)}} r^2
		\end{equation*}
		and
		\begin{equation*}
			r_2^2 = \frac {m - \lambda r^2 \pm \sqrt{\left( m - \lambda r^2 \right)^2 - 4(m - 1)}} {3m - 2 - \lambda r^2 \pm \sqrt{\left( m - \lambda r^2 \right)^2 - 4(m - 1)}} r^2.
		\end{equation*}
		It is easy to check that $r_1, r_2 \in \left( 0, r \right)$.
		
		Now, we verify the first relation of \eqref{eq:SecondDegreeEquationExample2}. It is equivalent to 
		\begin{equation*}
			m - \lambda r^2 \pm \sqrt{\left( m - \lambda r^2 \right)^2 - 4(m - 1)} \neq 2.
		\end{equation*}
		Suppose we have equality, i.e. 
		\begin{equation}\label{eq:relation1ExampleTorus}
			m - \lambda r^2 - 2 \neq \mp \sqrt{\left( m - \lambda r^2 \right)^2 - 4(m - 1)}.
		\end{equation}
		Since $\lambda < (m - 2\sqrt{m - 1}) / r^2$, the left-hand side of \eqref{eq:relation1ExampleTorus} is positive, so the first relation of \eqref{eq:SecondDegreeEquationExample2} is satisfied for $\alpha = \alpha_2$. For $\alpha = \alpha_1$, we square the both sides of \eqref{eq:relation1ExampleTorus} and we get $4 \lambda r^2 \neq 0$. Therefore for $\lambda = 0$, $\alpha_1$ cannot satisfies the first relation of \eqref{eq:SecondDegreeEquationExample2}.
		
		In conclusion, for any $\lambda \neq 0$ both $\alpha_1$ and $\alpha_2$ satisfy the first relation of \eqref{eq:SecondDegreeEquationExample2}. If $\lambda = 0$, then only $\alpha_2$ verifies the first relation of \eqref{eq:SecondDegreeEquationExample2} and we obtain $r_1 = r_2 = r / \sqrt 2$.
	\end{proof}
	
	\smallskip
	
	\subsection{Properties of $PMC$ $\lambda$-biharmonic submanifolds}
	
	In the more general case of $CMC$ submanifolds, we have the following property concerning the range of the mean curvature for $\lambda$-biharmonic submanifolds.
	
	\begin{theorem}\label{th:CMCLambdaBiharmonicImmersion}
		Let $\varphi : M^m \rightarrow \mathbb S^n (r)$ be a $CMC$ proper $\lambda$-biharmonic immersion. Then $\lambda < m / r^2$ and $|H| \in \left( 0, \sqrt{1 / r^2 - \lambda / m} \right]$. Moreover, $|H| = \sqrt{1 / r^2 - \lambda / m}$ if and only if $M$ is minimal in the small hypersphere $\mathbb S^{n-1} \left( \sqrt {mr^2 / (2m - \lambda r^2)} \right) \subset \mathbb S^n (r)$.
	\end{theorem}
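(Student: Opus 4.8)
The plan is to run the same Bochner-plus-Cauchy--Schwarz scheme that yields Proposition \ref{th:IntervalH} in the biharmonic case, while keeping track of the extra parameter $\lambda$. I would start from the characterization \eqref{eq:LambdaBiharmonicCharacterization} of Theorem \ref{th:CharacterizationLambdaBiharmonicity}. Since $M$ is $CMC$ and proper, $|H|$ is a positive constant and $\grad |H|^2 = 0$, so the second equation of \eqref{eq:LambdaBiharmonicCharacterization} reduces to $\trace A_{\nabla^\perp_{(\cdot)} H}(\cdot) = 0$ and only the normal equation $\Delta^\perp H + \trace B(\cdot, A_H(\cdot)) + (\lambda - m/r^2) H = 0$ remains to be exploited.

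First I would take the inner product of this normal equation with $H$. Using $\langle \trace B(\cdot, A_H(\cdot)), H \rangle = |A_H|^2$ and the Weitzenb\"ock identity $\tfrac12 \Delta |H|^2 = \langle \Delta^\perp H, H \rangle - |\nabla^\perp H|^2$, which for a $CMC$ immersion reduces pointwise to $\langle \Delta^\perp H, H \rangle = |\nabla^\perp H|^2$, I obtain
$$
|\nabla^\perp H|^2 + |A_H|^2 = \left( \frac m {r^2} - \lambda \right) |H|^2.
$$
Then I would use $\trace A_H = m |H|^2$ together with the elementary estimate $|A_H|^2 \geq (\trace A_H)^2 / m = m |H|^4$, with equality exactly when $A_H = |H|^2 \id$. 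Dropping the nonnegative term $|\nabla^\perp H|^2$ and dividing by $m|H|^2 > 0$ yields $|H|^2 \leq 1/r^2 - \lambda/m$; positivity of the left-hand side then forces $\lambda < m/r^2$, establishing both assertions of the first part.

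The equality case $|H|^2 = 1/r^2 - \lambda/m$ holds if and only if simultaneously $|\nabla^\perp H|^2 = 0$ and $|A_H|^2 = m|H|^4$, i.e. $M$ is $PMC$ and pseudo-umbilical. To convert this into the stated splitting, I would pass to the ambient $\mathbb R^{n+1}$ through $\overline r = \iota \circ \varphi$. For $X$ tangent to $M$ one has $\nabla^{\mathbb R^{n+1}}_X \overline r = X$ and, since $H$ is normal to $M$ and tangent to $\mathbb S^n(r)$, $\nabla^{\mathbb R^{n+1}}_X H = -A_H X + \nabla^\perp_X H = -|H|^2 X$ under the equality hypotheses. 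Hence $V = \overline r + |H|^{-2} H$ satisfies $\nabla^{\mathbb R^{n+1}}_X V = 0$, so $V \equiv V_0$ is a fixed vector; moreover $\langle \overline r, V_0 \rangle = |\overline r|^2 + |H|^{-2}\langle \overline r, H\rangle = r^2$. Thus $\varphi(M)$ lies in the affine hyperplane $\langle \,\cdot\,, V_0 \rangle = r^2$, whose intersection with $\mathbb S^n(r)$ is a small hypersphere; computing $|V_0|^2 = r^2 + |H|^{-2}$ and the radius $a$ of the slice gives precisely $a^2 = mr^2/(2m - \lambda r^2)$. Since $H$ is proportional to $\overline r - V_0$, it is orthogonal to that hypersphere, so $M$ is minimal in it. Conversely, for $M$ minimal in the totally umbilical $\mathbb S^{n-1}(a) \subset \mathbb S^n(r)$ one computes directly $|H| = \tfrac 1 r \sqrt{(r^2 - a^2)/a^2}$, which with $a^2 = mr^2/(2m - \lambda r^2)$ equals $\sqrt{1/r^2 - \lambda/m}$, closing the equivalence.

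The routine part is the pointwise inequality, which mirrors the classical $CMC$ biharmonic argument behind Proposition \ref{th:IntervalH}; the step requiring genuine care is the equality case, namely deducing from the combination \emph{$PMC$ and pseudo-umbilical} that $M$ sits minimally in a small hypersphere and, in particular, pinning down its radius through the constancy of $V = \overline r + |H|^{-2} H$ and the hyperplane-slice computation.
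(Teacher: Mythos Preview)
Your proof is correct and follows essentially the same route as the paper: pair the normal $\lambda$-biharmonic equation with $H$, use the Weitzenb\"ock identity together with the $CMC$ hypothesis to obtain $(m/r^2-\lambda)|H|^2 = |A_H|^2 + |\nabla^\perp H|^2$, and then apply $|A_H|^2 \geq m|H|^4$ to get the bound with equality exactly when $M$ is $PMC$ and pseudo-umbilical. The only difference is cosmetic: for the equality case the paper simply invokes the Chen--Yano result \cite{ChenYano1972} to place such a submanifold minimally in a small hypersphere of radius $r/\sqrt{1+r^2|H|^2}$, whereas you reprove that step explicitly via the constant vector $V=\overline r+|H|^{-2}H$ and the hyperplane slice---both leading to $a^2=mr^2/(2m-\lambda r^2)$.
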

	\begin{proof}
		First, we take the inner product with $H$ in the first equation of \eqref{eq:LambdaBiharmonicCharacterization} and we get
		\begin{align*}
			 \langle \Delta^\perp H, H \rangle + \langle \trace B(\cdot, A_H(\cdot)), H \rangle - \left( \frac m {r^2} - \lambda \right) |H|^2 = 0,
		\end{align*}
		i.e.
		\begin{equation*}
			\langle \Delta^\perp H, H \rangle = \left( \frac m {r^2} - \lambda \right) |H|^2 - |A_H|^2.
		\end{equation*}
		Then, using the Weitzenb\" ock formula
		\begin{equation*}
			\frac 1 2 \Delta |H|^2 = \langle \Delta^\perp H, H \rangle - |\nabla^\perp H|^2
		\end{equation*}
		and the property of $M$ being $CMC$, we obtain 
		\begin{equation}\label{eq:Relation1CMClambdaBiharmonic}
			\left( \frac m {r^2} - \lambda \right) |H|^2 = |A_H|^2 + |\nabla^\perp H|^2.
		\end{equation}
		Thus $\lambda < m / r^2$.
		
		Let $p$ an arbitrary point of $M$ and consider $\{ e_i \}_{i \in \overline {1, m}}$ an orthonormal basis in $T_p M$ such that $A_H e_i = \mu_i e_i$, for any $i \in \overline {1, m}$.
		
		We know that $ \sum \mu_i = m |H|^2$ and $\sum \mu_i^2 = |A_H|^2$. Using \eqref{eq:Relation1CMClambdaBiharmonic} and Cauchy-Buniakovski-Schwarz inequality we achieve
		\begin{align*}
			\left( \frac m {r^2} - \lambda \right) |H|^2 = \sum _{i=1} ^m \mu_i^2 + |\nabla^\perp H|^2 \geq \frac {\left( \sum \limits_{i=1} ^m \mu_i \right)^2} m + |\nabla^\perp H|^2 = m|H|^4 + |\nabla^\perp H|^2.
		\end{align*}
		This implies
		\begin{align*}
			|H|^2 \left( \frac m {r^2} - \lambda - m|H|^2 \right) \geq |\nabla^\perp H|^2 \geq 0,
		\end{align*}
		on $M$. Since $|H|^2 \neq 0$, we obtain that 
		\begin{equation*}
			|H|^2 \leq \frac 1 {r^2} - \frac \lambda m.
		\end{equation*}
		If $|H| = \sqrt{1 / r^2 - \lambda / m}$, then $\nabla^\perp H = 0$, i.e. $M$ is $PMC$ in $\mathbb S^n (r)$, and we have equality in Cauchy-Buniakovski-Schwarz inequality, i.e. $M$ is pseudo-umbilical.
		
		Clearly, if $M$ is $PMC$ and pseudo-umbilical, then $|H| = \sqrt{1 / r^2 - \lambda / m}$.
		
		Further, according to a result from \cite{ChenYano1972}, $M$ lies as a minimal submanifold in a small hypersphere of $\mathbb S^n (r)$ of radius
		$$
		\frac r {\sqrt{1 + r^2 |H|^2}} = \sqrt {\frac {m r^2} {2m - \lambda r^2}}.
		$$
	\end{proof}
	
	\begin{remark}
		We note that, if $\lambda \geq m / r^2$ and $M$ is $\lambda$-biharmonic, then $M$ has to be minimal.
	\end{remark}
	
	\begin{remark}
		From the proof of the above result, we can see that, apparently, Theorem \ref{th:CMCLambdaBiharmonicImmersion} can be given in a stronger form. Indeed, let $\varphi : M^m \rightarrow \mathbb S^n (r)$ be a $CMC$ proper $\lambda$-biharmonic immersion. Then $|H| = \sqrt{1/r^2 - \lambda/m}$ if and only if there is a point $p \in M$ such that, at $p$, $M$ is $PMC$ and pseudo-umbilical.
	\end{remark}

	\begin{corollary}
		Let $\varphi : M^m \rightarrow \mathbb S^{m+1} (r)$ be a $CMC$ proper $\lambda$-biharmonic hypersurface. Then $\lambda < m / r^2$ and $|H| \in \left( 0, \sqrt{1 / r^2 - \lambda / m} \right]$. Moreover, $|H| = \sqrt{1 / r^2 - \lambda / m}$ if and only if $\varphi (M)$ is an open subset of the small hypersphere 
		$$
		\mathbb S^{n-1} \left( \sqrt { \frac {mr^2} {2m - \lambda r^2} } \right) \subset \mathbb S^n (r).
		$$
	\end{corollary}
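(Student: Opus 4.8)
The plan is to derive this corollary as a direct specialization of Theorem \ref{th:CMCLambdaBiharmonicImmersion} to the codimension-one setting, where $n = m+1$. The first assertions transfer verbatim: applying Theorem \ref{th:CMCLambdaBiharmonicImmersion} to the immersion $\varphi : M^m \to \mathbb S^{m+1}(r)$ yields immediately that $\lambda < m/r^2$ and that $|H| \in \left(0, \sqrt{1/r^2 - \lambda/m}\right]$, since these conclusions hold for arbitrary codimension. Thus no independent argument is needed for the range of $\lambda$ and $|H|$; I would simply invoke the theorem.

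The only point requiring an additional (elementary) observation is the equality case. By Theorem \ref{th:CMCLambdaBiharmonicImmersion}, the condition $|H| = \sqrt{1/r^2 - \lambda/m}$ is equivalent to $M$ being minimal in the small hypersphere
$$
\mathbb S^{m}\left( \sqrt{\frac{mr^2}{2m - \lambda r^2}} \right) \subset \mathbb S^{m+1}(r),
$$
obtained by setting $n = m+1$ in the radius $\sqrt{mr^2/(2m - \lambda r^2)}$. The key remark is that this small hypersphere has dimension $m$, which equals $\dim M$. Hence I would note that a minimal isometric immersion of an $m$-dimensional manifold into an $m$-dimensional sphere is necessarily a local isometry: the immersion is full-dimensional, so the second fundamental form of $M$ inside $\mathbb S^m(\cdot)$ vanishes automatically and the immersion is totally geodesic, forcing $\varphi(M)$ to be an open subset of the small hypersphere. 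Conversely, any open subset of $\mathbb S^{m}(\cdot)$ is trivially a (full-dimensional, hence minimal) submanifold of that hypersphere, so Theorem \ref{th:CMCLambdaBiharmonicImmersion} returns $|H| = \sqrt{1/r^2 - \lambda/m}$.

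I do not anticipate a genuine obstacle here, as the corollary is purely a consequence of the theorem together with the dimension-matching remark. The one place to be careful is the phrasing: whereas the general theorem concludes that $M$ is \emph{minimal in} the small hypersphere, in the hypersurface case the equality of dimensions upgrades this to $\varphi(M)$ being an \emph{open subset of} the small hypersphere. I would therefore keep the proof to a single short paragraph that cites Theorem \ref{th:CMCLambdaBiharmonicImmersion} for the bounds, specializes the radius to $n = m+1$, and records that full-dimensional minimality means the image is open in $\mathbb S^{m}\left(\sqrt{mr^2/(2m - \lambda r^2)}\right)$.
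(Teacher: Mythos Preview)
Your proposal is correct and matches the paper's treatment: the paper states this corollary immediately after Theorem \ref{th:CMCLambdaBiharmonicImmersion} without proof, treating it as a direct specialization to codimension one together with the observation that a minimal $m$-dimensional submanifold of an $m$-sphere is an open subset of it.
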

	
	The next theorem classifies the proper $\lambda$-biharmonic submanifolds in spheres which lie in a small hypersphere of the target.
	
	\begin{theorem}\label{th:lambdaBiharmonicSubmanifoldsThatLieInHyperspheres}
		Let $\varphi : M^m \to \mathbb S^{n-1} (a) \times \{ b \} \equiv \mathbb S^{n-1} (a)$ be an immersion and $\iota : \mathbb S^{n-1} (a) \to \mathbb S^n (r)$ be the canonical inclusion, where $a^2 + b^2 = r^2$ and $a > 0$. Then $M$ is proper $\lambda$-biharmonic in $\mathbb S^n (r)$ if and only if either $a^2 = m r^2 / \left ( 2m - \lambda r^2\right )$ and $M$ is minimal in $\mathbb S^{n-1} \left ( \sqrt { m r^2 / \left ( 2m - \lambda r^2 \right )} \right )$, or $a^2 > m r^2 / \left ( 2m - \lambda r^2\right )$ and $M$ is minimal in a small hypersphere $\mathbb S^{n-2} \left ( \sqrt { m r^2 / \left ( 2m - \lambda r^2\right )} \right )$ of $\mathbb S^{n-1} (a)$. In both cases, 
		\begin{equation*}
			\left | H ^{\iota \circ \varphi} \right |^2 = \frac 1 {r^2} - \frac \lambda m
		\end{equation*}
		and in the latter we have a codimension reduction result, that is $M$ lies in some totally geodesic hypersphere $\mathbb S^{n-2} \left ( \sqrt { m r^2 / \left ( 2m - \lambda r^2\right )} \right ) \subset \mathbb S^{n-1} (r) \subset \mathbb S^n (r)$.
	\end{theorem}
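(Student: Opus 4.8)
The plan is to relate the geometry of $\varphi$ inside the small hypersphere to that of $\psi := \iota\circ\varphi$ in $\mathbb S^n(r)$, and then to recognize that $\psi$ being $\lambda$-biharmonic forces $\varphi$ to realize the equality case of Theorem \ref{th:CMCLambdaBiharmonicImmersion}. I would treat first the proper case $0 < a < r$ (the totally geodesic case $a=r$ reduces to a lower-dimensional ambient). Let $\xi$ be a unit section of the normal bundle of $\mathbb S^{n-1}(a)$ in $\mathbb S^n(r)$, so that $N^\psi M = N^\varphi M \oplus \mathbb R\xi$. Since the small hypersphere is totally umbilical, a direct computation gives $A^\psi_\xi = c\,\id$ with $c$ a nonzero constant satisfying $c^2 = 1/a^2 - 1/r^2$, that $\xi$ is parallel in the normal bundle of $\psi$, and that $B^\psi(X,Y) = B^\varphi(X,Y) + c\langle X,Y\rangle\xi$. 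From here I would record the identities $H^\psi = H^\varphi + c\xi$, $A^\psi_{H^\psi} = A^\varphi_{H^\varphi} + c^2\,\id$, $|H^\psi|^2 = |H^\varphi|^2 + c^2$, together with $\nabla^\perp H^\psi = \nabla^\perp H^\varphi$ and $\Delta^\perp H^\psi = \Delta^\perp H^\varphi$; the last two hold because $H^\varphi$ and its successive normal covariant derivatives stay orthogonal to $M$, so their $\xi$-components vanish.

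First I would substitute these identities into the characterization \eqref{eq:LambdaBiharmonicCharacterization} for $\psi$. The second equation becomes $4\trace A^\varphi_{\nabla^\perp_{(\cdot)}H^\varphi}(\cdot) + m\grad|H^\varphi|^2 = 0$. The first equation splits along $\mathbb R\xi$ and $N^\varphi M$. Its $\xi$-component reads $c\big(m|H^\varphi|^2 + mc^2 + \lambda - m/r^2\big) = 0$; since $c\neq 0$, this forces $|H^\varphi|^2 = 1/r^2 - \lambda/m - c^2$ to be constant, so $\varphi$ is $CMC$, and moreover $|H^\psi|^2 = |H^\varphi|^2 + c^2 = 1/r^2 - \lambda/m$, which already proves the stated value of the mean curvature. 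Substituting $mc^2 + \lambda - m/r^2 = -m|H^\varphi|^2$ into the $N^\varphi M$-component yields $\Delta^\perp H^\varphi + \trace B^\varphi(\cdot, A^\varphi_{H^\varphi}(\cdot)) - m|H^\varphi|^2 H^\varphi = 0$. Comparing with \eqref{eq:LambdaBiharmonicCharacterization}, these two equations say precisely that $\varphi$ is a $CMC$ proper $\lambda'$-biharmonic immersion in $\mathbb S^{n-1}(a)$ with $\lambda' = m\big(1/a^2 - |H^\varphi|^2\big)$.

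Now I would apply Theorem \ref{th:CMCLambdaBiharmonicImmersion} to $\varphi$. Its upper bound is $\sqrt{1/a^2 - \lambda'/m} = |H^\varphi|$, hence it is attained, and the equality statement gives that $\varphi$ is minimal in a small hypersphere of $\mathbb S^{n-1}(a)$ of radius $\sqrt{ma^2/(2m-\lambda' a^2)}$, which simplifies to $\rho := \sqrt{mr^2/(2m-\lambda r^2)}$. The dichotomy of the theorem is then read off from $|H^\varphi|^2 = 1/\rho^2 - 1/a^2 \ge 0$: if $a = \rho$ then $H^\varphi = 0$ and $\varphi$ is minimal in $\mathbb S^{n-1}(a) = \mathbb S^{n-1}(\rho)$, while if $a > \rho$ then $\varphi$ is minimal in a proper small hypersphere $\mathbb S^{n-2}(\rho)$ of $\mathbb S^{n-1}(a)$.

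For the codimension reduction when $a > \rho$, I would argue as in the remark following Proposition \ref{th:ProperBiharmonicSubmanifoldsInSpheresGeneral}: since $\psi$ is pseudo-umbilical with $|H^\psi|^2 = 1/r^2 - \lambda/m$, the mean curvature together with the position vector determines a fixed affine hyperplane $\Pi$ of $\mathbb R^{n+1}$ containing $\psi(M)$, whence $\psi(M)$ lies in a totally geodesic hypersphere $\mathbb S^{n-1}(r) = \mathbb S^n(r)\cap\Pi'$, and in fact in a small hypersphere of radius $\rho$ there. The converse is a direct check: if $M$ is minimal in a small sphere of radius $\rho$ in $\mathbb S^n(r)$, then by \cite{ChenYano1972} it is pseudo-umbilical and $PMC$ in $\mathbb S^n(r)$ with $A^\psi_{H^\psi} = |H^\psi|^2\,\id$ and $|H^\psi|^2 = 1/\rho^2 - 1/r^2 = 1/r^2 - \lambda/m$; substituting into the $PMC$ characterization \eqref{eq:PMC2LambdaBiharmonicCHaracterization}, and using $\trace A_\xi = m\langle H^\psi,\xi\rangle = 0$ for $\xi \perp H^\psi$, verifies that $\psi$ is $\lambda$-biharmonic, while $|H^\psi|>0$ gives properness. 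I expect the main obstacle to be the bookkeeping of the $\xi$ versus $N^\varphi M$ splitting of the first equation, in particular isolating the scalar $\xi$-component that simultaneously forces $\varphi$ to be $CMC$ and fixes the value of $\lambda'$; once this is done the statement collapses onto the equality case of Theorem \ref{th:CMCLambdaBiharmonicImmersion}, and the geometric identification of $\Pi$ and $\Pi'$ in the codimension reduction is the remaining delicate point.
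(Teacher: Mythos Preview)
Your proposal is correct and follows essentially the same route as the paper: both reduce the $\lambda$-biharmonicity of $\psi=\iota\circ\varphi$ to the pair of conditions that $\varphi$ is $CMC$ and $\lambda'$-biharmonic in $\mathbb S^{n-1}(a)$ (your $\lambda'=m(1/a^2-|H^\varphi|^2)$ coincides with the paper's $\lambda'=2m(1/a^2+\lambda/(2m)-1/r^2)$), and then invoke the equality case of Theorem \ref{th:CMCLambdaBiharmonicImmersion}. The only difference is presentational---the paper packages the reduction as the system \eqref{eq:lambdaBiharmonicityTau2} in the $\tau,\tau_2$ language, while you split equation \eqref{eq:LambdaBiharmonicCharacterization} along $\mathbb R\xi\oplus N^\varphi M$; you also treat the converse and the codimension reduction a bit more explicitly than the paper does.
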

	
	\begin{proof}
		If $\iota \circ \varphi$ is $\lambda$-biharmonic, then it is not difficult to check that
		\begin{equation}\label{eq:lambdaBiharmonicityTau2}
			\left \{
			\begin{array}{l}
				\tau_2 (\varphi) = 2m \left ( \frac 1 {a^2} + \frac \lambda {2m} -\frac 1 {r^2} \right ) \tau(\varphi)\\[5pt]
				|\tau (\varphi) |^2 = \frac {m^2} {r^2} \left ( 2 - \frac {r^2} {a^2} \right ) - m \lambda
			\end{array}
			\right ..
		\end{equation}
		Note that from these relations we get that $\varphi$ is $CMC$ and $\lambda'$-biharmonic, where $\lambda'$ is the coefficient of $\tau (\varphi)$ from the first equation.
		
		It is easy to see that $\tau (\iota \circ \varphi) = \tau (\varphi) - (m / r) (b / a) \eta$, where $\eta$ is a unit section in the normal bundle of $\mathbb S^{n-1} (a)$ in $\mathbb S^n (r)$. Thus
		\begin{equation*}
			\left |H ^{\iota \circ \varphi} \right |^2 = \frac {1} {r^2} - \frac \lambda m
		\end{equation*}
		and from the second equation of \eqref{eq:lambdaBiharmonicityTau2} we obtain that
		\begin{equation*}
			\frac 1 {a^2} \leq \frac 2 {r^2} - \frac \lambda m.
		\end{equation*}
		The last relation implies that $a^2 \geq mr^2 / \left ( 2m - \lambda r^2 \right )$.
		
		If $a^2 = mr^2 / \left ( 2m - \lambda r^2 \right )$, then $\left |H^\varphi \right |^2 = 0$, i.e. $M$ is minimal in $\mathbb S^{n-1} (a)$.
		
		If $a^2 > mr^2 / \left ( 2m - \lambda r^2 \right )$ then, since $M$ is $CMC$ and $\lambda'$-biharmonic, from Theorem \ref{th:CMCLambdaBiharmonicImmersion} we obtain that 
		\begin{equation*}
			\left |H^\varphi \right | \in \left ( 0, \sqrt {\frac 2 {r^2} - \frac 1 {a^2} - \frac \lambda m} \right ].
		\end{equation*}
		Moreover, using the second equation of \eqref{eq:lambdaBiharmonicityTau2} we get that $\left | H^\varphi \right |^2 = 2 / r^2 - 1 / a^2 - \lambda / m$, thus $M$ is minimal in $\mathbb S^{n-2} \left ( \sqrt { mr^2 / \left ( 2m - \lambda r^2 \right )} \right )$.
	\end{proof}
	
	When the submanifold is $PMC$ we can prove that the range of the mean curvature is a bit smaller. More precisely, we can prove that, for $PMC$ proper $\lambda$-biharmonic submanifolds there is a gap in the interval given in Theorem \ref{th:CMCLambdaBiharmonicImmersion} where $|H|$ belongs to, see Theorem \ref{th:HForPMCLambdaBiharmonicImmersions} and Corollary \ref{th:HForPMCLambdaBihamonicHypersurfaces}.
	
	From now on, we will assume that $m > 2$. The case $m = 2$ will be studied at the end of the paper.
	
	In order to obtain this, we define the symmetric $(1, 1)$-tensor field 
	$$
	\Phi = A_H - |H|^2 \id,
	$$ 
	where $\id$ is the identity on $C(TM)$, see \cite{AlencarDoCarmo1994}, \cite{AliasGarcia2010}. It can be shown that $\trace \Phi = 0$ and 
	$$
	|\Phi|^2 = |A_H|^2 - m |H|^4.
	$$
	
	\begin{lemma}\label{th:LemmaCurvature}
		Let $\varphi : M^m \rightarrow \mathbb S^n (r)$ be a submanifold such that $H \neq 0$ at any point of $M$. Then
		\begin{align}
			R(X, Y) Z =& \left( \frac 1 {r^2} + |H|^2 \right) \left( \langle Z, Y \rangle X - \langle Z, X \rangle Y \right) \label{eq:LemmaCurvature}\\
					   & + \frac 1 {|H|^2} \left( \langle \Phi Y, Z \rangle \Phi X - \langle \Phi X, Z \rangle \Phi Y \right) \notag \\
					   & + \left( \langle \Phi Y, Z \rangle X - \langle \Phi X, Z \rangle Y + \langle Y, Z \rangle \Phi X - \langle X, Z \rangle \Phi Y \right) \notag \\
					   & + \sum _{a=1} ^k \left( \langle A_{\eta_a} Y, Z \rangle A_{\eta_a} X - \langle A_{\eta_a} X, Z \rangle A_{\eta_a} Y \right), \notag 
		\end{align}
		for any $X, Y, Z \in C(TM)$, where $\{ H / |H|, \eta_a \}_{a \in \overline {1, k}}$, $k = n - m - 1$, is an orthonormal frame field in the normal bundle of $M$ in $\mathbb S^n (r)$.
	\end{lemma}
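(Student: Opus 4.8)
The plan is to derive \eqref{eq:LemmaCurvature} directly from the Gauss equation \eqref{eq:GaussEquation}, using that the target is a real space form. Since $\mathbb S^n(r)$ has constant sectional curvature $1/r^2$, its curvature tensor is
\begin{equation*}
	R^{\mathbb S^n(r)}(X,Y)Z = \frac 1 {r^2}\left( \langle Y,Z\rangle X - \langle X,Z\rangle Y \right),
\end{equation*}
so rearranging \eqref{eq:GaussEquation} gives, for all tangent $X,Y,Z,W$,
\begin{equation*}
	\langle R(X,Y)Z,W\rangle = \frac 1 {r^2}\left( \langle Y,Z\rangle\langle X,W\rangle - \langle X,Z\rangle\langle Y,W\rangle \right) + \langle B(X,W),B(Y,Z)\rangle - \langle B(Y,W),B(X,Z)\rangle.
\end{equation*}
All the remaining work goes into rewriting the two $B$-terms.

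First I would decompose the second fundamental form along the orthonormal normal frame $\{H/|H|,\eta_a\}$. Because $\langle B(X,Y),\eta\rangle = \langle A_\eta X,Y\rangle$, and the coefficient of $B(X,Y)$ along the unit vector $H/|H|$ equals $\frac 1 {|H|}\langle A_H X,Y\rangle$, one obtains
\begin{equation*}
	B(X,Y) = \frac 1 {|H|^2}\langle A_H X,Y\rangle\, H + \sum_{a=1}^k \langle A_{\eta_a}X,Y\rangle\, \eta_a.
\end{equation*}
Substituting this into each product and using orthonormality of the frame (together with $\langle H,H\rangle = |H|^2$) splits $\langle B(X,W),B(Y,Z)\rangle - \langle B(Y,W),B(X,Z)\rangle$ into an $A_H$-part carrying a factor $1/|H|^2$ and the sum $\sum_a\bigl( \langle A_{\eta_a}X,W\rangle\langle A_{\eta_a}Y,Z\rangle - \langle A_{\eta_a}Y,W\rangle\langle A_{\eta_a}X,Z\rangle \bigr)$; the latter is already the $W$-component of the last line of \eqref{eq:LemmaCurvature}.

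The remaining step is to replace $A_H$ by $\Phi + |H|^2\,\id$ in the $A_H$-part and expand. After dividing by $|H|^2$, the product $\langle A_H X,W\rangle\langle A_H Y,Z\rangle - \langle A_H Y,W\rangle\langle A_H X,Z\rangle$ produces four groups: a pure $\Phi$-term with coefficient $1/|H|^2$, two mixed $\Phi$--metric terms, and a pure metric term with coefficient $|H|^2$. The pure metric term combines with the $1/r^2$ coming from the ambient curvature to give the coefficient $\frac 1 {r^2} + |H|^2$ of the first line; the pure $\Phi$-term yields the second line; and the two mixed terms assemble into the third line. Reading off each group as the $W$-component of a tangent vector then gives \eqref{eq:LemmaCurvature}.

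This argument is purely computational, so I expect no conceptual obstacle; the only delicate points are the correct normalization of the $H/|H|$-component of $B$ (hence the $1/|H|^2$ factors) and careful bookkeeping of the antisymmetrization in $X$ and $Y$, so that each expanded cross-term is routed to the correct line of the stated formula.
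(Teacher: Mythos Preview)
Your proposal is correct and follows essentially the same approach as the paper: both start from the Gauss equation, expand $\langle B(\cdot,\cdot),B(\cdot,\cdot)\rangle$ via the orthonormal normal frame $\{H/|H|,\eta_a\}$ to separate the $A_H$-part from the $A_{\eta_a}$-part, and then substitute $A_H=\Phi+|H|^2\,\id$ together with the space-form curvature of $\mathbb S^n(r)$ to obtain the stated decomposition.
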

	\begin{proof}
		Equation \eqref{eq:LemmaCurvature} follows from the Gauss equation \eqref{eq:GaussEquation}.
		
		Indeed, the terms in the right-hand side of \eqref{eq:GaussEquation} can be written as
		\begin{equation*}
			 \langle B(X, W), B(Y, Z) \rangle = \left\langle \frac 1 {|H|^2} \langle A_H Y, Z \rangle A_H X + \sum _{a=1} ^k \langle A_{\eta_a} Y, Z \rangle A_{\eta_a} X, W \right\rangle
		\end{equation*}
		and
		\begin{equation*}
			\langle B(Y, W), B(X, Z) \rangle = \left\langle \frac 1 {|H|^2} \langle A_H X, Z \rangle A_H Y + \sum _{a=1} ^k \langle A_{\eta_a} X, Z \rangle A_{\eta_a} Y, W \right\rangle.
		\end{equation*}
		Because these relations hold for any $W \in C(TM)$, we get that
		\begin{align*}
			R^{\mathbb S^n (r)} (X, Y) Z =& R(X, Y) Z + \frac 1 {|H|^2} \left( \langle A_H X, Z \rangle A_H Y - \langle A_H Y, Z \rangle A_H X \right)\\
			   							  & + \sum _{a=1} ^k \left( \langle A_{\eta_a} X, Z \rangle A_{\eta_a} Y - \langle A_{\eta_a} Y, Z \rangle A_{\eta_a} X \right).
		\end{align*}
		Now, taking into account that $A_H = \Phi + |H|^2 \id$ and
		\begin{equation*}
			R^{\mathbb S^n (r)} (X, Y) Z = \frac 1 {r^2} \left( \langle Z, Y \rangle X - \langle Z, X \rangle Y \right),
		\end{equation*}
		the conclusion follows.
	\end{proof}
	\begin{lemma}
		Let $\varphi : M^m \rightarrow \mathbb S^n (r)$ be a non-minimal $PMC$ submanifold. Then $\nabla \Phi$ is symmetric and 
		\begin{equation}\label{eq:RelationNabla2}
			\trace \nabla^2 \Phi = \left( \frac m {r^2} + m |H|^2 - \frac {|\Phi|^2} {|H|^2} \right) \Phi X + m \Phi^2 X - |\Phi|^2 X - \sum _{a=1} ^k \langle \Phi, A_{\eta_a} \rangle A_{\eta_a}.
		\end{equation}
	\end{lemma}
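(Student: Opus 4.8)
The plan is to prove the two assertions separately: the symmetry of $\nabla \Phi$ is a direct consequence of the Codazzi equation, while the formula for $\trace \nabla^2 \Phi$ is a Simons-type identity, obtained by commuting covariant derivatives and then inserting the expression for the intrinsic curvature from Lemma \ref{th:LemmaCurvature}.

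First I would prove that $\nabla \Phi$ is symmetric, i.e. $(\nabla_X \Phi) Y = (\nabla_Y \Phi) X$. Since $M$ is $PMC$, that is $\nabla^\perp H = 0$, the function $|H|^2$ is constant, so $\nabla \Phi = \nabla A_H$. Writing the Codazzi equation \eqref{eq:CodazziEquation} for $\eta = H$, the two shape-operator terms on the right-hand side vanish because $\nabla^\perp H = 0$, while $\left( R^{\mathbb S^n(r)}(X, Y) H \right)^\top = 0$, since in the space form $\mathbb S^n(r)$ one has $R^{\mathbb S^n(r)}(X, Y) H = \frac 1 {r^2} \left( \langle Y, H \rangle X - \langle X, H \rangle Y \right) = 0$, because $H$ is normal and $X, Y$ are tangent. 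Hence $(\nabla A_H)(X, Y) = (\nabla A_H)(Y, X)$, and therefore $\nabla \Phi$ is symmetric.

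For the second assertion I would first record two consequences of the first part: the associated cubic form $T(X, Y, Z) = \langle (\nabla_X \Phi) Y, Z \rangle$ is totally symmetric (symmetric in $Y, Z$ because $\Phi$ is, and in $X, Y$ by Codazzi), and $\Phi$ is divergence-free, since for any $Z$ the contraction $\sum_i \langle (\nabla_{e_i} \Phi) e_i, Z \rangle = \sum_i T(e_i, e_i, Z)$ equals $\nabla_Z \trace \Phi = 0$ by total symmetry and $\trace \Phi = 0$. Working at a point with a geodesic orthonormal frame $\{ e_i \}$ and using total symmetry of $T$ together with the Ricci identity for the $(0,2)$-tensor $\phi = \langle \Phi \cdot, \cdot \rangle$, I would derive the intermediate identity
\[
\langle \trace \nabla^2 \Phi (X), Z \rangle = - \sum_{i=1}^m \langle \Phi (R(e_i, X) e_i), Z \rangle - \sum_{i=1}^m \langle \Phi e_i, R(e_i, X) Z \rangle,
\]
the term $\nabla_X (\Div \Phi)$ dropping out by the divergence-free property. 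This expresses the rough Laplacian of $\Phi$ purely through contractions of the intrinsic curvature $R$ with $\Phi$.

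The last step, which is also the main computational obstacle, is to substitute the explicit curvature tensor \eqref{eq:LemmaCurvature} into both contractions and simplify. Each of the four groups of terms in \eqref{eq:LemmaCurvature} must be contracted against $\{ e_i \}$ and $\Phi$: the constant-sectional-curvature group produces the factor $m/r^2 + m|H|^2$ multiplying $\Phi X$; the group carrying the $\frac 1 {|H|^2} \Phi \otimes \Phi$ coefficient together with the mixed $\Phi$-group yields the terms $- \frac {|\Phi|^2} {|H|^2} \Phi X$, $m \Phi^2 X$ and $-|\Phi|^2 X$; and the $A_{\eta_a}$-group produces $- \sum_a \langle \Phi, A_{\eta_a} \rangle A_{\eta_a} X$. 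The bookkeeping relies on the identities $\trace \Phi = 0$, $\trace \Phi^2 = |\Phi|^2$ and $\langle \Phi, A_{\eta_a} \rangle = \trace (\Phi A_{\eta_a})$, and collecting the contributions gives exactly \eqref{eq:RelationNabla2}. The delicate part is precisely this term-by-term contraction, since several contributions of the form $m \Phi^2 X$ and $|\Phi|^2 X$ arise from different groups and must be combined with the correct signs.
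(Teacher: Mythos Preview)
Your approach is essentially the same as the paper's: Codazzi for the symmetry of $\nabla\Phi$, then the Ricci commutation formula combined with $\Div\Phi=0$ to reduce $\trace\nabla^2\Phi$ to curvature contractions, and finally substitution of the intrinsic curvature from Lemma~\ref{th:LemmaCurvature}. The intermediate identity you wrote is correct and matches what the paper obtains.

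There is, however, one genuine omission. When you contract the $A_{\eta_a}$-group of \eqref{eq:LemmaCurvature} against your two curvature terms, the raw output is not just $-\sum_a\langle\Phi,A_{\eta_a}\rangle A_{\eta_a}X$: one also picks up
\[
\sum_{a=1}^k\bigl(A_{\eta_a}\Phi A_{\eta_a}X-\Phi A_{\eta_a}^2X\bigr)=\sum_{a=1}^k[A_{\eta_a},\Phi]\,A_{\eta_a}X,
\]
together with a term $-\sum_a(\trace A_{\eta_a})\,\cdots$ that vanishes because $\eta_a\perp H$. The commutator term disappears only because $A_H$ (hence $\Phi$) commutes with each $A_{\eta_a}$; this is a consequence of the Ricci equation \eqref{eq:RicciEquation} together with $\nabla^\perp H=0$, and it is precisely where the $PMC$ hypothesis enters the normal part of the computation. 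The paper invokes this explicitly. Your list of ``bookkeeping identities'' ($\trace\Phi=0$, $\trace\Phi^2=|\Phi|^2$, $\langle\Phi,A_{\eta_a}\rangle=\trace(\Phi A_{\eta_a})$) is not sufficient to close the argument; you must also appeal to the Ricci equation to obtain $[\Phi,A_{\eta_a}]=0$.
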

	\begin{proof}
		Using Codazzi equation \eqref{eq:CodazziEquation} and $\nabla ^\perp H = 0$ we obtain that 
		$$
		(\nabla A_H) (X, Y) = (\nabla A_H) (Y, X),
		$$
		for any $X, Y \in C(TM)$, i.e. $\nabla A_H$ is symmetric. Since $\nabla \Phi = \nabla A_H$, we obtain that $\nabla \Phi$ is symmetric.
		
		It can be easily shown that
		$$
		\trace (\nabla \Phi) = m \grad |H|^2 = 0
		$$
		and therefore $\trace (\nabla A_H) = \trace (\nabla \Phi) = 0$.
		
		Recall the Ricci commutation formula
		\begin{equation}\label{eq:RicciCommutationFormula}
			\left( \nabla^2 \Phi \right) (X, Y, Z) - \left( \nabla^2 \Phi \right) (Y, X, Z) = R(X, Y) \Phi Z - \Phi (R(X, Y) Z),
		\end{equation}
		for any $X, Y, Z \in C(TM)$.
		
		We consider $\{ X_i \}_{i \in \overline {1, m}}$ a local orthonormal frame field tangent to $M$ and let $\{ H / |H|, \eta_a \}_{a \in \overline {1, k}}$, $k = n - m - 1$, be an orthonormal frame field in the normal bundle of $M$ in $\mathbb S^n (r)$. We have
		$$
		\trace A_{\eta_a} = \langle mH, \eta_a \rangle = 0, \quad \forall a \in \overline {1, k}.
		$$
		Using Lemma \ref{th:LemmaCurvature} and equation \eqref{eq:RicciCommutationFormula} we infer
		\begin{align*}
			\trace (\nabla^2 \Phi) (X) =& \sum _{i=1} ^m (\nabla^2 \Phi) (X, X_i, X_i) + m \left( \frac 1 {r^2} + |H|^2 \right) \Phi X - \frac {|\Phi|^2} {|H|^2} \Phi X - |\Phi|^2 X\\
									    & + m \Phi^2 X + \sum _{a=1} ^k ( A_{\eta_a}( \Phi( A_{\eta_a} X)) - \Phi( A_{\eta_a}( A_{\eta_a} X)) - \langle A_{\eta_a}, \Phi \rangle A_{\eta_a} X ).
		\end{align*}
		By straightforward computations we obtain 
		\begin{equation*}
			\sum _{i=1} ^m (\nabla^2 \Phi) (X, X_i, X_i) = \nabla _X (\trace \nabla \Phi) = 0
		\end{equation*}
		and from Ricci equation \eqref{eq:RicciEquation} it follows that $\Phi \circ A_{\eta_a} = A_{\eta_a} \circ \Phi$.
		
		Therefore, we obtain the conclusion.
	\end{proof}
	\begin{lemma}\label{th:LemmaRelationFromWeitzenbock}
		Let $\varphi : M^m \to \mathbb S^n (r)$ be a non-minimal submanifold. If $M$ is $PMC$ and $A_H \perp A_{\eta_a}$, for any $a \in \overline {1, k}$, then
		\begin{equation*}
			- \frac 1 2 \Delta |\Phi|^2 =  \left( \frac m {r^2} + m |H|^2 - \frac {|\Phi|^2} {|H|^2} \right) |\Phi|^2 + m \trace \Phi^3 + |\nabla \Phi|^2.
		\end{equation*}
	\end{lemma}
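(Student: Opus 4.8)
The plan is to apply a Weitzenböck (Bochner) formula to the symmetric $(1,1)$-tensor field $\Phi$ and then insert the expression for $\trace \nabla^2 \Phi$ already established in \eqref{eq:RelationNabla2}. Concretely, for a section of the bundle of symmetric endomorphisms of $TM$ equipped with the induced connection $\nabla$, the tensorial Weitzenböck identity reads
$$
-\frac 1 2 \Delta |\Phi|^2 = \langle \trace \nabla^2 \Phi, \Phi \rangle + |\nabla \Phi|^2,
$$
with the same sign convention for $\Delta$ as in the Weitzenböck formula for $H$ used in the proof of Theorem \ref{th:CMCLambdaBiharmonicImmersion} (so that $\Delta = -\trace \nabla^2$ on functions). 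This is the exact analogue of $\frac 1 2 \Delta |H|^2 = \langle \Delta^\perp H, H \rangle - |\nabla^\perp H|^2$, and it already accounts for the Laplacian term on the left and the gradient term $|\nabla \Phi|^2$ on the right of the claimed equality. Note that being non-minimal and $PMC$ guarantees $|H|^2$ is a positive constant, so every division by $|H|^2$ below is legitimate.

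The second step is to simplify the right-hand side of \eqref{eq:RelationNabla2} under the two hypotheses. The key point is that the term $\sum_{a=1}^k \langle \Phi, A_{\eta_a} \rangle A_{\eta_a}$ vanishes: since $\eta_a \perp H$ we have $\trace A_{\eta_a} = \langle mH, \eta_a \rangle = 0$, whence
$$
\langle \Phi, A_{\eta_a} \rangle = \langle A_H, A_{\eta_a} \rangle - |H|^2 \trace A_{\eta_a} = \langle A_H, A_{\eta_a} \rangle,
$$
which is zero by the assumption $A_H \perp A_{\eta_a}$. Therefore \eqref{eq:RelationNabla2} collapses to
$$
\trace \nabla^2 \Phi = \left( \frac m {r^2} + m |H|^2 - \frac {|\Phi|^2} {|H|^2} \right) \Phi + m \Phi^2 - |\Phi|^2 \id.
$$

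The final step is to take the inner product of this identity with $\Phi$. Using $\langle \Phi, \Phi \rangle = |\Phi|^2$, the symmetry of $\Phi$ to write $\langle \Phi^2, \Phi \rangle = \trace \Phi^3$, and the trace-free property $\langle \id, \Phi \rangle = \trace \Phi = 0$ (so the $|\Phi|^2 \id$ term drops out), we obtain
$$
\langle \trace \nabla^2 \Phi, \Phi \rangle = \left( \frac m {r^2} + m |H|^2 - \frac {|\Phi|^2} {|H|^2} \right) |\Phi|^2 + m \trace \Phi^3,
$$
and substituting into the Weitzenböck identity yields precisely the stated formula. I expect no genuine obstacle here; the only points requiring care are fixing the sign convention in the Weitzenböck formula so that it is consistent with the one used earlier for $H$, and verifying that it is $\langle \Phi, A_{\eta_a} \rangle$—rather than merely $\langle A_H, A_{\eta_a} \rangle$—that vanishes, which is exactly where the identity $\trace A_{\eta_a} = 0$ enters.
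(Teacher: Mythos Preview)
Your proposal is correct and follows essentially the same approach as the paper: both apply the Weitzenb\"ock identity $-\tfrac{1}{2}\Delta|\Phi|^2 = \langle \trace\nabla^2\Phi,\Phi\rangle + |\nabla\Phi|^2$, simplify \eqref{eq:RelationNabla2} using $\langle\Phi,A_{\eta_a}\rangle=0$, and take the inner product with $\Phi$ (using $\trace\Phi=0$ to kill the $|\Phi|^2\id$ term). Your write-up is in fact slightly more explicit than the paper's in justifying why $\langle\Phi,A_{\eta_a}\rangle$ reduces to $\langle A_H,A_{\eta_a}\rangle$ via $\trace A_{\eta_a}=0$.
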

	\begin{proof}
		It is easy to see that
		$$
		\langle \Phi, A_{\eta_a} \rangle = 0.
		$$
		Thus \eqref{eq:RelationNabla2} becomes 
		$$
		(\trace \nabla^2 \Phi) (X) = \left( \frac m {r^2} + m |H|^2 - \frac {|\Phi|^2} {|H|^2} \right) \Phi X + m \Phi^2 X - |\Phi|^2 X.
		$$
		The Weitzenb\" ock formula states that
		$$
		- \frac 1 2 \Delta |\Phi|^2 =  \langle \Phi, \trace \nabla^2 \Phi \rangle + |\nabla \Phi|^2
		$$
		and using the fact that
		\begin{equation*}
			 \langle \Phi, \trace \nabla^2 \Phi \rangle =  \left( \frac m {r^2} + m |H|^2 - \frac {|\Phi|^2} {|H|^2} \right) |\Phi|^2 + m \trace \Phi^3,
		\end{equation*}
		we get the conclusion.
	\end{proof}
	
	Before giving the main result concerning the range of $|H|$ for $\lambda$-biharmonic submanifolds, we first recall Okumura Lemma \cite{Okumura1974}.
	
	\begin{lemma}
		Let $b_1, \ldots, b_m$ be real numbers such that $\sum \limits _{i=1} ^m b_i = 0$. Then
		\begin{equation*}
			- \frac {m - 2} {\sqrt{m (m - 1)}} \left( \sum _{i=1} ^m b_i^2 \right)^{\frac 3 2} \leq \sum _{i=1} ^m b_i^3 \leq \frac {m - 2} {\sqrt {m (m - 1)}} \left( \sum _{i=1} ^m b_i^2 \right) ^{\frac 3 2}.
		\end{equation*}
		Moreover, equality holds in the right-hand (respectively, left-hand) side if and only if $(m - 1)$ of the $b_i$'s are non-positive (respectively, non-negative) and equal.
	\end{lemma}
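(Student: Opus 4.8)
The plan is to prove both inequalities simultaneously by a compactness-plus-Lagrange-multiplier argument, exploiting the homogeneity of the estimate. Both sides of the claimed inequality are homogeneous of degree three in $(b_1, \ldots, b_m)$, so if all $b_i$ vanish the statement is trivial, and otherwise I may normalize $\sum_{i=1}^m b_i^2 = 1$. The problem then becomes: maximize and minimize the function $f(b) = \sum_{i=1}^m b_i^3$ on the set $K = \{ b \in \mathbb R^m : \sum_i b_i = 0,\ \sum_i b_i^2 = 1\}$, and show that the extreme values are exactly $\pm (m-2)/\sqrt{m(m-1)}$. Since $K$ is the intersection of a hyperplane through the origin with the unit sphere, it is compact and nonempty (as $m \geq 2$), so $f$ attains its maximum and minimum on $K$.

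First I would locate the critical points via Lagrange multipliers. Writing $\nabla f = \alpha\, \nabla(\sum b_i) + \beta\, \nabla(\sum b_i^2)$ gives $3 b_i^2 = \alpha + 2\beta b_i$ for every $i$, so each $b_i$ is a root of the same quadratic $3x^2 - 2\beta x - \alpha = 0$. Consequently, at any critical point the $b_i$ take at most two distinct values. Say $s$ of them equal $p$ and the remaining $t = m - s$ equal $q$, with $1 \leq s \leq m-1$ (if they were all equal, the constraint $\sum b_i = 0$ would force $b \equiv 0$, contradicting $\sum b_i^2 = 1$).

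Next I would carry out the elementary two-value computation. The constraint $sp + tq = 0$ gives $q = -\tfrac{s}{t} p$; substituting into $sp^2 + tq^2 = 1$ yields $p^2 = t/(sm)$, and a direct simplification then gives
\begin{equation*}
	f(b) = s p^3 + t q^3 = \pm \frac{t - s}{\sqrt{m\, s\, t}}, \qquad \text{so} \qquad f(b)^2 = \frac{(m - 2s)^2}{m\, s\, (m-s)}.
\end{equation*}
With the substitution $y = m - 2s$ one rewrites $f(b)^2 = \frac{4 y^2}{m(m^2 - y^2)}$, which is strictly increasing in $y^2$; hence over $s \in \{1, \ldots, m-1\}$ the quantity $|f(b)|$ is largest precisely at $s = 1$ and $s = m-1$, where it equals $(m-2)/\sqrt{m(m-1)}$. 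This establishes the two inequalities.

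Finally, I would read off the equality cases from this analysis: the extreme values are attained only for the configuration $s = 1$ or $s = m-1$, that is, when $(m-1)$ of the $b_i$ coincide and the remaining one differs. A short sign check, computing the two common values explicitly as $p = \pm 1/\sqrt{m(m-1)}$ and $q = \mp (m-1)/\sqrt{m(m-1)}$, shows that the right-hand (maximum) value corresponds to the $(m-1)$ equal entries being non-positive, and the left-hand (minimum) value to them being non-negative, as claimed. The main point requiring care is the reduction step: verifying that the Lagrange condition genuinely forces at most two distinct values, and that the endpoints $s \in \{1, m-1\}$ dominate the interior; the remainder is routine algebra.
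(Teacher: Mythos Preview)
Your argument is correct. The paper does not prove this lemma at all; it merely cites it as Okumura's Lemma \cite{Okumura1974} and uses it as a known tool. So there is no ``paper's own proof'' to compare against.

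Your compactness-plus-Lagrange-multiplier approach is one of the standard routes to this inequality, and the details check out: the constraint gradients $(1,\ldots,1)$ and $(2b_1,\ldots,2b_m)$ are linearly independent everywhere on $K$ (equality would force all $b_i$ equal, hence zero, contradicting $\sum b_i^2=1$), so $K$ is a smooth manifold and Lagrange multipliers apply; the quadratic condition genuinely forces at most two distinct values; and your monotonicity argument in $y^2 = (m-2s)^2$ correctly identifies $s\in\{1,m-1\}$ as optimal. The sign check is also right: with $m-1$ entries equal to $p$ and one equal to $-(m-1)p$, one gets $f = -(m-1)m(m-2)p^3$, so $f$ is maximal when $p<0$, matching the stated equality case. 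The only thing you leave slightly implicit is the converse (``if'') direction of the equality characterization, but this follows immediately from your explicit computation of $f$ at those configurations, together with the observation that the degenerate case $b\equiv 0$ trivially satisfies both the equality and the description (zero being non-positive).
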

	
	\begin{theorem}\label{th:HForPMCLambdaBiharmonicImmersions}
		Let $\varphi : M^m \to \mathbb S^n (r)$ be a $PMC$ proper $\lambda$-biharmonic immersion. Suppose that $m > 2$ and $|H|^2 \in ( 0, 1 / r^2 - \lambda / m )$. Then 
		$$
		\lambda \leq \frac {m - 2 \sqrt{m-1}} {r^2}
		$$
		and
		\begin{equation*}
			|H|^2 \in \left\{
			\begin{array}{ll}
				\left(0, x_2 \right], & \text{if}\ \lambda \leq 0\\[5pt]
				[x_1, x_2], & \text{if}\ \lambda \in \left( 0, \frac {m - 2\sqrt{m-1}} {r^2} \right)\\[5pt]
				\left\{ x_2 \right\}, & \text{if}\ \lambda = \frac {m - 2\sqrt{m-1}} {r^2} 
			\end{array}
			\right. ,
		\end{equation*}
		where $x_1$ and $x_2$ are the positive solutions of the following second degree equation
		$$
		m^4 x^2 + m^2 \left( m \lambda - \frac {(m-2)^2} {r^2} \right) x + (m-1)\lambda^2 = 0.
		$$
		Moreover, we have
		\begin{enumerate}
			\item $|H|^2 = x_2$ if and only if locally $M = L \times I$, $\varphi = \varphi_1 \times \varphi_2$ is an extrinsic product, $\varphi_1 : L \to \mathbb S^{n-2} (r_1)$ is minimal and $\varphi_2 : I \to \mathbb S^1 (r_2)$ is (an open subset of) the circle parametrized by arc length, where $I = (-\varepsilon, \varepsilon)$ and the radii are given by
			\begin{align}
				& r_1^2 = \frac {3m - 2 - \lambda r^2 - \sqrt{(m - \lambda r^2)^2 - 4(m - 1)}} {2(2m - \lambda r^2)} r^2, \label{eq:Radius1}\\
				& r_2^2 = \frac {m + 2 - \lambda r^2 + \sqrt{(m - \lambda r^2)^2 - 4(m - 1)}} {2(2m - \lambda r^2)} r^2 \label{eq:Radius2}.
			\end{align}
			If $M$ is complete and simply connected, the above decompositions of $M$ and $\varphi$ hold globally, $M = L \times \mathbb R$ and $\varphi_2 (\mathbb R) = \mathbb S^1 (r_2)$.
			
			\item if $\lambda > 0$, then $|H|^2 = x_1$ if and only if locally $M = L \times I$, $\varphi = \varphi_1 \times \varphi_2$ is an extrinsic product, $\varphi_1 : L \to \mathbb S^{n-2} (r_1)$ is minimal and $\varphi_2 : I \to \mathbb S^1 (r_2)$ is (an open subset of) the circle parametrized by arc length, where $I = (-\varepsilon, \varepsilon)$ and the radii are given by
			\begin{align}
				& r_1^2 = \frac {3m - 2 - \lambda r^2 + \sqrt{(m - \lambda r^2)^2 - 4(m - 1)}} {2(2m - \lambda r^2)} r^2, \label{eq:Radius3}\\
				& r_2^2 = \frac {m + 2 - \lambda r^2 - \sqrt{(m - \lambda r^2)^2 - 4(m - 1)}} {2(2m - \lambda r^2)} r^2. \label{eq:Radius4}
			\end{align}
			If $M$ is complete and simply connected, the above decompositions of $M$ and $\varphi$ hold globally, $M = L \times \mathbb R$ and $\varphi_2 (\mathbb R) = \mathbb S^1 (r_2)$.
		\end{enumerate}
		
	\end{theorem}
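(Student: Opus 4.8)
The plan is to run a Simons/Weitzenb\"ock-type argument on the traceless tensor $\Phi = A_H - |H|^2\id$ and feed it into Okumura's Lemma. Since $M$ is $PMC$, $|H|$ is constant, and the characterization \eqref{eq:PMC2LambdaBiharmonicCHaracterization} gives $|A_H|^2 = \left(m/r^2 - \lambda\right)|H|^2$ together with $A_H \perp A_{\eta_a}$ for every $a$. Hence $|\Phi|^2 = |A_H|^2 - m|H|^4$ is constant, the hypotheses of Lemma \ref{th:LemmaRelationFromWeitzenbock} hold, and $\Delta|\Phi|^2 = 0$ turns that lemma into
\[
0 = \left(\frac m{r^2} + m|H|^2 - \frac{|\Phi|^2}{|H|^2}\right)|\Phi|^2 + m\trace\Phi^3 + |\nabla\Phi|^2.
\]
Writing $u = |H|^2$ and using $|\Phi|^2/u = m/r^2 - \lambda - mu$, the bracket collapses to $\lambda + 2mu$. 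If $|\Phi| = 0$ then $M$ is pseudo-umbilical and $u = 1/r^2 - \lambda/m$, excluded by hypothesis; so $|\Phi| > 0$. Applying the lower Okumura bound $\trace\Phi^3 \ge -\frac{m-2}{\sqrt{m(m-1)}}|\Phi|^3$, discarding $|\nabla\Phi|^2 \ge 0$ and dividing by $|\Phi|^2 > 0$, I obtain the scalar inequality $\lambda + 2mu \le \frac{m(m-2)}{\sqrt{m(m-1)}}|\Phi|$.

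Next I would analyze this inequality. When its left side is nonnegative I may square it; substituting $|\Phi|^2 = (m/r^2-\lambda)u - mu^2$ and simplifying produces exactly $Q(u) \le 0$, where $Q(u) = m^4 u^2 + m^2\bigl(m\lambda - (m-2)^2/r^2\bigr)u + (m-1)\lambda^2$ is the quadratic in the statement. Since $Q$ opens upward, $u$ is confined between the roots $x_1 \le x_2$ whenever these are real. A discriminant computation, using the factorizations $m \pm 2\sqrt{m-1} = (\sqrt{m-1}\pm 1)^2$ so that $(m-2)^2 = (m-2\sqrt{m-1})(m+2\sqrt{m-1})$, shows the roots are real precisely when $\lambda \le (m-2\sqrt{m-1})/r^2$, which forces the claimed bound; since $\lambda < (m-2)^2/(mr^2)$ the sum of roots is positive and both roots are then positive. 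For $\lambda > 0$ the left side $\lambda + 2mu$ is always positive, squaring is lossless and $u \in [x_1,x_2]$, with $\lambda = (m-2\sqrt{m-1})/r^2$ being the double-root case $x_1 = x_2$. For $\lambda \le 0$ a short computation gives $Q(-\lambda/(2m)) = \tfrac{(m-2)^2}{4}\lambda\bigl(2m/r^2 - \lambda\bigr) \le 0$, so $-\lambda/(2m)$ lies between the roots; since on $(0,-\lambda/(2m))$ the inequality holds automatically and on $[-\lambda/(2m),\infty)$ it reduces to $u \le x_2$, the admissible set is exactly $(0,x_2]$. This yields the three displayed ranges.

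For the rigidity, suppose $u$ equals the relevant endpoint ($x_2$ always, and $x_1$ when $\lambda > 0$; at such $u$ one checks $\lambda + 2mu > 0$). Then the whole chain is saturated, forcing simultaneously $|\nabla\Phi|^2 = 0$, hence $\nabla A_H = \nabla\Phi = 0$, and equality in Okumura's Lemma. The latter means $\Phi$ has exactly two eigenvalues $\mu$ (multiplicity $m-1$) and $-(m-1)\mu$ (multiplicity $1$), with $\mu > 0$; from $|\Phi|^2 = m(m-1)\mu^2$ these are constant, so $A_H$ has two distinct constant principal curvatures with multiplicities $m-1$ and $1$. Because $A_H$ is parallel with constant eigenvalues, its eigendistributions $V_1$ (rank $m-1$) and $V_2$ (rank $1$) are parallel, and the local de Rham decomposition splits $M$ isometrically as $L^{m-1}\times I$ with $TL = V_1$, $TI = V_2$. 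Using that $A_H \perp A_{\eta_a}$ and that the $A_{\eta_a}$ commute with $\Phi$ (via the Ricci equation \eqref{eq:RicciEquation}), I would upgrade this to an extrinsic product $\varphi = \varphi_1 \times \varphi_2$, with $\varphi_1 : L \to \mathbb S^{n-2}(r_1)$ minimal and $\varphi_2 : I \to \mathbb S^1(r_2)$ a unit-speed circle. Matching the two principal curvatures of $A_H$ against the shape-operator data of the model $\mathbb S^{n-2}(r_1)\times\mathbb S^1(r_2) \subset \mathbb S^n(r)$ computed in Proposition \ref{th:ExampleProductSphereCircle} pins down $r_1,r_2$, giving \eqref{eq:Radius1}--\eqref{eq:Radius2} at $x_2$ and \eqref{eq:Radius3}--\eqref{eq:Radius4} at $x_1$. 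Finally, if $M$ is complete and simply connected, the parallel splitting globalizes by the de Rham theorem to $M = L \times \mathbb R$, and $\varphi_2(\mathbb R)$ sweeps out the full circle $\mathbb S^1(r_2)$.

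I expect the main obstacle to be this last rigidity/splitting step: converting the pointwise equality data ($\nabla A_H = 0$ and the $(m-1,1)$ eigenvalue pattern) into a genuine \emph{extrinsic} product. Showing that the full second fundamental form and the normal connection respect $V_1 \oplus V_2$ --- so that $L$ truly immerses minimally into a fixed totally geodesic $\mathbb S^{n-2}(r_1)$ and the $V_2$-integral curves are honest circles of radius $r_2$ --- is the delicate part, whereas the range and the gap are essentially forced once the constant $|\Phi|^2$ is inserted into Okumura's Lemma.
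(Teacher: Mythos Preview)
Your approach is essentially identical to the paper's: the same Weitzenb\"ock/Okumura chain for the gap (with the same case split on the sign of $\lambda+2mu$), and for rigidity the same reduction to $\nabla A_H=0$ with the $(m-1,1)$ eigenvalue pattern followed by a de Rham splitting. For the extrinsic step you flag as delicate, the paper checks that $\tilde B(X,Y)=0$ for $X\in TL$, $Y\in TI$ (using $A_{\eta_a}\xi=0$, which follows since $A_{\eta_a}$ commutes with $A_H$ and preserves the one-dimensional $\omega$-eigenspace while $\trace A_{\eta_a}=0$) and then invokes Moore's extrinsic decomposition theorem; Theorem~\ref{th:lambdaBiharmonicSubmanifoldsThatLieInHyperspheres} is used twice to rule out the degenerate possibilities (a nontrivial $\mathbb R^{n_0}$ factor in Moore, and $L$ sitting in a proper hypersphere of $\mathbb S^{n-2}(r_1)$), both of which would force $|H|^2=1/r^2-\lambda/m$.
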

	\begin{proof}
		In order to get an upper bound of $|H|^2$, we look for an inequality involving $|H|^2$. Since $M$ is a $PMC$ submanifold and $\Phi$ is symmetric, we will use Lemma \ref{th:LemmaRelationFromWeitzenbock} and Okumura's Lemma to obtain the desired inequality.
		
		First, from \eqref{eq:PMC2LambdaBiharmonicCHaracterization} we get that
		$$
		|\Phi|^2 = |A_H|^2 - m|H|^4 = \left( \frac m {r^2} - \lambda \right) |H|^2 - m |H|^4 = \left( \frac m {r^2} - \lambda - m |H|^2\right) |H|^2,
		$$
		where, from Theorem \ref{th:CMCLambdaBiharmonicImmersion}, $\lambda < m / r^2$.
		
		Also, note that, as $\Phi$ is symmetric, we can apply Okumura's Lemma and obtain
		$$
		\trace \Phi^3 \geq - \frac {m-2} {\sqrt{m(m-1)}} |\Phi|^3,
		$$
		with equality if and only if $(m-1)$ of the eigenvalues of $\Phi$ are non-negative and equal.
		
		From Lemma \ref{th:LemmaRelationFromWeitzenbock} we get 
		\begin{align}
			 0 =& |\nabla \Phi|^2 + \left( \lambda + 2m |H|^2 \right) \left( \frac m {r^2} - \lambda - m |H|^2 \right) |H|^2 + m \trace \Phi ^3 \label{eq:MainInequality}\\
			\geq& \left(\lambda + 2m |H|^2 \right) \left( \frac m {r^2} - \lambda - m |H|^2 \right) |H|^2 - \frac {m(m-2)} {\sqrt{m(m-1)}} |H|^3 \left( \frac m {r^2} - \lambda - m|H|^2 \right)^{\frac 3 2} \notag\\
			   =& \left( \frac m {r^2} - \lambda - m|H|^2 \right) |H|^2 \left( \lambda + 2m |H|^2 - \frac {m(m-2)} {\sqrt{m(m-1)}} |H| \sqrt{\frac m {r^2} - \lambda - m |H|^2} \right).	\notag		   
		\end{align}
		Since $|H| < \sqrt{1 / r^2 - \lambda / m}$, we get that $m / r^2 - \lambda - m |H|^2 > 0$ and the last inequality is equivalent to
		\begin{equation}\label{eq:RelationBeforeSecondDegreeEquation}
			 \lambda + 2m |H|^2 \leq \frac {m(m - 2)} {\sqrt{m(m - 1)}} |H| \sqrt{\frac m {r^2} - \lambda - m |H|^2}.
		\end{equation}
		If $\lambda \geq 0$, then the left-hand side of inequality \eqref{eq:RelationBeforeSecondDegreeEquation} is positive. If $\lambda < 0$ and $|H|^2 \leq - \lambda / (2m)$, then the left-hand side of the inequality \eqref{eq:RelationBeforeSecondDegreeEquation} is non-positive, thus the inequality is automatically satisfied. Further, we will assume either $\lambda \geq 0$ or $\lambda < 0$ and $|H|^2 > -\lambda / (2m)$.
		
		Since $\lambda + 2m|H|^2 > 0$, by squaring, the inequation \eqref{eq:RelationBeforeSecondDegreeEquation} is equivalent to
		\begin{equation}\label{eq:SecondDegreeEquationInH}
			m^4 |H|^4 + m^2 \left( m \lambda - (m - 2)^2 \frac 1 {r^2} \right) |H|^2 + (m-1) \lambda^2 \leq 0.
		\end{equation}
		Let $x = |H|^2$, thus \eqref{eq:SecondDegreeEquationInH} becomes
		\begin{equation}\label{eq:SecondDegreeEquationInX}
			m^4 x^2 + m^2 \left( m \lambda - (m - 2)^2 \frac 1 {r^2} \right)x + (m - 1) \lambda^2 \leq 0.
		\end{equation}
		We compute now the discriminant of the second degree equation associated to \eqref{eq:SecondDegreeEquationInX}
		\begin{equation*}
			\Delta = \frac {m^4 (m - 2)^2} {r^4} \left( (\lambda r^2 - m)^4 - 4 (m - 1) \right).
		\end{equation*}
		Since $\Delta$ must be non-negative we have
		\begin{equation*}
			\lambda \leq \frac {m - 2 \sqrt{m - 1}} {r^2}
		\end{equation*}
		and we distinguish two cases.
		
		If $\Delta = 0$, then 
		\begin{align*}
			\lambda = \frac {m - 2 \sqrt{m - 1}} {r^2} \in \left( 0, \frac m {r^2} \right)
		\end{align*}
		and
		\begin{equation*}
			x_1 = x_2 = |H|^2 = \frac {2 - 2m + m\sqrt{m - 1}} {m^2 r^2}.
		\end{equation*}
		If $\Delta > 0$, then
		\begin{align*}
			\left( \lambda r^2 - m \right)^2 - 4 (m - 1) > 0 \Leftrightarrow \lambda < \frac {m - 2 \sqrt{m - 1}} {r^2}.
		\end{align*}
		The second degree equation associated to \eqref{eq:SecondDegreeEquationInX} has two real solutions
		\begin{equation*}
			x_1 = \frac {(m - 2)^2 - m \lambda r^2 - (m - 2) \sqrt{\left(\lambda r^2 - m \right)^2 - 4 (m - 1)}} {2 m^2 r^2}
		\end{equation*}
		and
		\begin{equation*}
			x_2 = \frac {(m - 2)^2 - m \lambda r^2 + (m - 2) \sqrt{\left(\lambda r^2 - m \right)^2 - 4 (m - 1)}} {2 m^2 r^2}.
		\end{equation*}
		We have to check whether
		$$
		x_1, x_2 \in \left( 0, \frac 1 {r^2} - \frac \lambda m \right)
		$$
		when $\lambda \geq 0$ and 
		$$
		x_1, x_2 \in \left ( - \frac \lambda {2m}, \frac 1 {r^2} - \frac \lambda m \right )
		$$
		when $\lambda < 0$.
		
		First, we check whether $x_1 > 0$, which is equivalent to 
		\begin{equation}\label{eq:RelationX1Positive}
			(m - 2)^2 - m\lambda r^2 > (m - 2) \sqrt{(\lambda r^2 - m)^2 - 4(m - 1)}.
		\end{equation}
		Since $\lambda < (m - 2 \sqrt{m - 1}) / r^2$, we have that the left-hand side of \eqref{eq:RelationX1Positive} is positive.
		
		Therefore, \eqref{eq:RelationX1Positive} is equivalent to
		\begin{equation*}
			 m^2 \lambda^2 r^4 > (m - 2)^2 \lambda^2 r^4,
		\end{equation*}
		which is true if $\lambda \neq 0$. Thus, $x_1$ and also $x_2$ are positive when $\lambda \neq 0$ and, if $\lambda = 0$, we have $0 = x_1 < x_2 = (m - 2)^2/(m^2 r^2)$.
		
		Next, one can check that
		\begin{equation*}
			 x_2 < \frac 1 {r^2} - \frac \lambda m
		\end{equation*}
		is true for any $\lambda < (m - 2 \sqrt{m - 1}) / r^2$.
		
		Since $x_1 < x_2$, we get that
		\begin{equation*}
			0 \leq x_1 < x_2 < \frac 1 {r^2} - \frac \lambda m,
		\end{equation*}
		for any $\lambda < (m - 2 \sqrt{m - 1}) / r^2$.
		
		If $\lambda > 0$, we conclude that inequality \eqref{eq:RelationBeforeSecondDegreeEquation} implies that 
		$$
		|H|^2 \in [x_1, x_2].
		$$
		If $\lambda = 0$,  we conclude that
		$$
		|H|^2 \in \left( 0, x_2 \right].
		$$
		If $\lambda < 0$ and $|H|^2 \in ( - \lambda / (2m), 1 / r^2 - \lambda / m )$ we have to check the relation between $x_1$, $x_2$ and $- \lambda / (2m)$. 
		
		If $x_1 \geq - \lambda / (2m)$, then $\lambda r^2 (\lambda r^2 - 2m) \leq 0$, which is false because $\lambda < 0$. Thus $x_1 < - \lambda / (2m)$. Relation $x_2 > - \lambda / (2m)$ is equivalent to
		\begin{equation*}
			m - 2 + \sqrt{\left( \lambda r^2 - m \right)^2 - 4(m - 1)} > 0,
		\end{equation*}
		which is always true. Thus, $x_2 > - \lambda / (2m)$ and
		\begin{equation*}
			|H|^2 \in \left( - \frac \lambda {2m}, x_2\right].
		\end{equation*}
		On the other hand, again for $\lambda < 0$, inequality \eqref{eq:RelationBeforeSecondDegreeEquation} is satisfied for $|H|^2 \leq - \lambda / (2m)$. Therefore, if $\lambda < 0$, then
		\begin{equation*}
			|H|^2 \in (0, x_2].
		\end{equation*}
		We note that $x_2$ is the maximum value of the squared mean curvature of $M^m$ in $\mathbb S^n (r)$, no matter the sign of $\lambda$, and $x_1$ is the minimum value of the squared mean curvature, when $\lambda > 0$.
		
		Now suppose that $|H|^2 = x_2$ or $|H|^2 = x_1$ and $\lambda > 0$. Thus we have equality in \eqref{eq:RelationBeforeSecondDegreeEquation} and all inequalities in \eqref{eq:MainInequality} become equalities, that is $\nabla \Phi = 0$ and we have equality in Okumura's Lemma.
		
		In order to obtain the second part of the theorem we will use the de Rham and Moore decomposition theorems. First, we find the principal curvatures of $A_H$ and their associated distributions. Indeed, we denote by $\sigma_i$, $i \in \overline {1, m}$, the eigenvalues of $\Phi$, which are real constants. Since we have equality in Okumura's Lemma, relabelling if necessarily, we can suppose that
		$$
		\sigma_1 = \sigma_2 = \ldots = \sigma_{m-1} = \sigma \geq 0.
		$$
		Further, since $\trace \Phi = 0$ we have 
		$$
		\sigma_m = - (m - 1) \sigma \leq 0
		$$
		and thus
		\begin{align*}
			|\Phi|^2 =& \sigma_1^2 + \cdots + \sigma_m^2 = m (m - 1) \sigma^2.
		\end{align*}
		We know that 
		$$
		|\Phi|^2 = \left( \frac m {r^2} - \lambda m |H|^2 \right) |H|^2 = \left( \frac m {r^2} - \lambda - m x_* \right) x_*,
		$$
		where $x_* = x_1$ or $x_* = x_2$. Therefore, when $|H|^2 = x_*$, we have
		\begin{equation*}
			\sigma^2_* = \frac {\left( \frac m {r^2} - \lambda - m x_* \right) x_*} {m (m - 1)}.
		\end{equation*}
		Using the fact that $x_*$ is a solution of the equation \eqref{eq:SecondDegreeEquationInX}, we obtain that
		\begin{equation*}
			\sigma_* = \frac {\sqrt {4m^2 x_* + r^2 \lambda^2}} {m^2 r}.
		\end{equation*}
		If we denote by $\theta_i$, $i \in \overline {1, m}$, the eigenvalues of $A_H$, we have 
		$$
		\theta_i = \sigma_i + |H|^2,
		$$
		and so, when $|H|^2 = x_*$, we get
		\begin{equation}\label{eq:ThetaFormula}
			\theta_* = \theta_1 = \ldots = \theta_{m - 1} = \sigma_* + |H|^2 = x_* + \frac {\sqrt {4m^2 x_* + r^2 \lambda^2}} {m^2 r}
		\end{equation}
		and
		\begin{equation}\label{eq:OmegaFormula}
			\omega_* = \theta_m = \sigma_m + |H|^2 = x_* - (m - 1) \frac {\sqrt {4m^2 x_* + r^2 \lambda^2}} {m^2 r}.
		\end{equation}
		For the sake of simplicity we redenote $x_* = x$, $\theta_* = \theta$ and $\omega_* = \omega$.
		
		We consider
		\begin{equation*}
			T_\theta = \left\{ X \in C(TM)\ : \ A_H X = \theta X \right\}, \quad \dim T_\theta = m - 1
		\end{equation*}
		and
		\begin{equation*}
			T_\omega = \left\{ X \in C(TM)\ : \ A_H X = \omega X \right\}, \quad \dim T_\omega = 1.
		\end{equation*}
		Since $\theta$ and $\omega$ are constants, it is well-known that $T_\theta$ and $T_\omega$ are orthogonal, involutive and parallel. In the following we apply, locally, an intrinsic and then extrinsic decomposition of $M$.
		
		Thus, we are in the hypotheses of the decomposition Theorem of de Rham and obtain that for any point $p_0 \in M$, there is an open neighbourhood $U \subset M$ of $p_0$ which is isometric with $L^{m-1} \times I$, where $I = (-\varepsilon, \varepsilon)$ corresponds to an integral curve of a unit vector field $\xi \in T_\omega$ on $U$ and $L$ corresponds to an integral submanifold of $T_\theta$ through $p_0$. Moreover, $L$ is totally geodesic in $U$ and the integral curves of $\xi$ are geodesics in $U$. We remark that $\xi$ is a parallel vector field on $U$.
		
		Next, we will identify $U$ with its image through $\varphi$ and we will apply the extrinsic decomposition theorem. For that, we first show that the integral curves of $\xi$, viewed as curves in $\mathbb R^{n+1}$, are circles, all of them lying in parallel $2$-planes. 
		
		Indeed, consider $\{ H / |H|, \eta_a \}_{a \in \overline {1, k}}$ an orthonormal frame field in the normal bundle of $M$ in $\mathbb S^n (r)$ and $\{ E_\ell \}_{\ell \in \overline {1, m - 1}}$ an orthonormal frame field in $T_\theta$ on $U$.
		
		We have
		\begin{align*}
			\trace B(\cdot, A_H(\cdot)) =& \sum _{\ell = 1} ^{m - 1} B(E_\ell, A_H E_\ell) + B(\xi, A_H \xi)\\
											=& \theta \sum _{\ell = 1} ^{m - 1} B(E_\ell, E_\ell) + \theta B(\xi, \xi) + (\omega - \theta) B(\xi, \xi)\\
											=& \theta m H + (\omega - \theta) B(\xi, \xi).
		\end{align*}
		Using \eqref{eq:PMC1LambdaBiharmonicCHaracterization} and the above relation we obtain
		\begin{equation}\label{eq:BXi}
			B(\xi, \xi) = \frac {\frac m {r^2} - \lambda - m \theta} {\omega - \theta} H.
		\end{equation}
		Since $A_H$ and $A_{\eta_a}$ commute, we have
		\begin{equation*}
			A_{\eta_a} \xi = 0, \quad \forall a \in \overline {1, k}.
		\end{equation*}
		We compute
		\begin{align*}
			\nabla ^{\mathbb S^n (r)} _{\xi} B(\xi, \xi) =& \frac {\frac m {r^2} - \lambda - m \theta} {\omega - \theta} \nabla ^{\mathbb S^n (r)} _{\xi} H \\
														 =& \frac {\frac m {r^2} - \lambda - m \theta} {\omega - \theta} \left( \nabla ^\perp _{\xi} H - A_H \xi \right)\\
														 =& \frac {\omega \left( \frac m {r^2} - \lambda - m \theta \right)} {\theta - \omega} \xi.
		\end{align*}
		Let $ \gamma : I \to U$ be an integral curve of $\xi$, $\gamma (0) \in L$ and set $F_1 = \dot \gamma = \xi \circ \gamma$. Since $\xi$ is parallel, $\gamma$ is a geodesic in $U$. We compute now $\nabla ^{\mathbb S^n (r)} _{\dot \gamma} F_1$.
		\begin{align*}
			\nabla ^{\mathbb S^n (r)} _{\dot \gamma} F_1 =& \nabla _{\dot \gamma} \dot \gamma + B(\dot \gamma, \dot \gamma) = B(\xi, \xi)\\
														 =& \frac {\frac m {r^2} - \lambda - m \theta} {\omega - \theta} H\\
														 =& \kappa F_2,
		\end{align*}
		where
		\begin{equation}\label{eq:kDefinition}
		\kappa = \left| \nabla ^{\mathbb S^n (r)} _{\dot \gamma} F_1 \right| = \left| \frac {\frac m {r^2} - \lambda - m \theta} {\omega - \theta} \right| |H|
		\end{equation}
		and 
		$$
		F_2 = \frac {\frac m {r^2} - \lambda - m \theta} {\kappa (\omega - \theta)} H.
		$$
		Next,
		\begin{align*}
			\nabla ^{\mathbb S^n (r)} _{\dot \gamma} F_2 =& \frac 1 \kappa \frac {\frac m {r^2} - \lambda - m \theta} {\omega - \theta} \nabla ^{\mathbb S^n (r)} _{\xi} H\\
														 =& \frac 1 \kappa \frac {\frac m {r^2} - \lambda - m \theta} {\omega - \theta} \left( \nabla ^\perp _{\xi} H - A_H \xi \right)\\
														 =& - \frac 1 \kappa \frac { \omega \left( \frac m {r^2} - \lambda - m \theta \right) } {\omega - \theta} \xi.
		\end{align*}
		Taking into account \eqref{eq:ThetaFormula}, \eqref{eq:OmegaFormula} and  \eqref{eq:kDefinition} one can prove that
		\begin{equation*}
			\kappa^2 = \frac {\omega \left( \frac m {r^2} - \lambda - m \theta \right)} {\omega - \theta}.
		\end{equation*}
		Thus 
		\begin{equation*}
			\nabla ^{\mathbb S^n (r)} _{\dot \gamma} F_2 = - \kappa F_1.
		\end{equation*}
		Further, let $\widetilde \gamma = \iota \circ \gamma : I \to \mathbb R^{n+1}$, where $\iota : \mathbb S^n (r) \to \mathbb R^{n+1}$ is the canonical inclusion. We set $\widetilde F_1 = \dot {\widetilde \gamma}$. Now, we compute 
		\begin{align*}
			\nabla ^{\mathbb R^{n+1}} _{\dot \gamma} \widetilde F_1 =& \nabla ^{\mathbb S^n (r)} _{\dot \gamma} F_1 - \frac 1 {r^2} \left \langle \dot \gamma, \widetilde F_1 \right \rangle \gamma = \kappa F_2 - \frac 1 {r^2} \gamma\\
				=& \alpha \widetilde F_2,
		\end{align*}
		where
		\begin{equation*}
			\alpha = \left| \nabla ^{\mathbb R^{n+1}} _{\dot \gamma} \widetilde F_1 \right| = \sqrt{ \kappa^2 + \frac 1 {r^2} }.
		\end{equation*}
		and
		\begin{equation*}
			\widetilde F_2 = \frac 1 {\sqrt{\kappa^2 + \frac 1 {r^2}}} \left( \kappa F_2 - \frac 1 {r^2} \gamma \right).
		\end{equation*}
		Next, we can see that
		\begin{align*}
			\nabla ^{\mathbb R^{n+1}} _{\dot \gamma} \widetilde F_2 =& \frac 1 { \alpha} \nabla ^{\mathbb R^{n+1}} _{\dot \gamma} \left( \kappa F_2 - \frac 1 {r^2} \gamma \right)\\
																=& \frac 1 { \alpha} \left( \kappa \nabla ^{\mathbb S^n (r)} _{\xi} F_2 - \frac \kappa {r^2} \langle \xi, F_2 \rangle \gamma - \frac 1 {r^2} \xi \right)\\
																=& - \frac 1 { \alpha} \left( \kappa^2 + \frac 1 {r^2} \right) \xi = - \alpha \widetilde F_1.
		\end{align*}
		Therefore, we obtain the Frenet equations for $\widetilde \gamma$ in $\mathbb R^{n+1}$,
		\begin{equation*}
			\nabla ^{\mathbb R^{n+1}} _{\dot \gamma} \widetilde F_1 = \alpha \widetilde F_2 \quad \text{and} \quad \nabla ^{\mathbb R^{n+1}} _{\dot \gamma} \widetilde F_2 = - \alpha \widetilde F_1,
		\end{equation*}
		thus $\gamma$ is a circle of radius 
		\begin{equation*}
			\frac 1 {\alpha} = \frac 1 {\sqrt {\kappa^2 + \frac 1 {r^2}}} = \frac r {\sqrt{1 + r^2 \kappa^2}}
		\end{equation*}
		in $\mathbb R^{n+1}$ which lies in the $2$-plane generated by the vectors $\widetilde F_1 (0)$ and $\widetilde F_2 (0)$.
		
		We note that, by straightforward computations we have, for $x = x_* = x_1$,
		\begin{equation*}
			\frac r {\sqrt{1 + r^2 \kappa^2}} = \frac {m + 2 - \lambda r^2 - \sqrt{(m - \lambda r^2)^2 - 4(m - 1)}} {2(2m - \lambda r^2)} r^2
		\end{equation*}
		and, for $x = x_* = x_2$, 
		\begin{equation*}
			\frac r {\sqrt{1 + r^2 \kappa^2}} = \frac {m + 2 - \lambda r^2 + \sqrt{(m - \lambda r^2)^2 - 4(m - 1)}} {2(2m - \lambda r^2)} r^2.
		\end{equation*}
		We note that the radii obtain above correspond to the radii obtained in Proposition \ref{th:ExampleProductSphereCircle}.
		
		We show that the vector fields 
		$$
		\frac { \frac m {r^2} - \lambda - m \theta} {\omega - \theta} H - \frac 1 {r^2}\, \overline r \quad \text{and} \quad \xi \in T_\omega
		$$ 
		are parallel in $\mathbb R^{n+1}$ along all curves of $L$. In order to prove this, we consider $X \in C (TL)$ and we will show that 
		$$
		\nabla ^{\mathbb R^{n+1}} _X \left( \frac {\frac m {r^2} - \lambda - m \theta} {\omega - \theta} H - \frac 1 {r^2} \overline r \right) = 0 \quad \text{and} \quad \nabla ^{\mathbb R^{n+1}} _X \xi = 0.
		$$
		It is not difficult to see that $B(X, \xi) = 0$ and so
		\begin{equation}\label{eq:RelationXiParallelAlongL}
			\nabla ^{\mathbb R^{n+1}} _{X} \xi = \nabla ^{\mathbb S^{n} (r)} _{X} \xi = \nabla _X \xi = 0.
		\end{equation}
		Next, by direct computation, we have
		\begin{align*}
			\nabla ^{\mathbb R^{n+1}} _{X} \left( \frac {\frac m {r^2} - \lambda - m \theta} {\omega - \theta} H - \frac 1 {r^2} \overline r \right) =& \nabla ^{\mathbb R^{n+1}} _{X} \left( B(\xi, \xi) - \frac 1 {r^2} \overline r \right)\\
							=& \nabla ^{\mathbb R^{n+1}} _X \nabla ^{\mathbb S^n(r)} _\xi \xi - \frac 1 {r^2} X\\
							=& \nabla ^{\mathbb S^n(r)} _X \nabla ^{\mathbb S^n(r)} _\xi \xi - \frac 1 {r^2} \left\langle X, \nabla ^{\mathbb S^n(r)} _\xi \xi \right\rangle \overline r - \frac 1 {r^2} X\\
							=& \nabla ^{\mathbb S^n(r)} _\xi \nabla ^{\mathbb S^n(r)} _X \xi + \nabla ^{\mathbb S^n(r)} _{[X, \xi]} \xi + R(X, \xi) \xi - \frac 1 {r^2} X. 
		\end{align*}
		Since $[X, \xi] = 0$ and using \eqref{eq:RelationXiParallelAlongL} we obtain that
		\begin{align*}
			\nabla ^{\mathbb R^{n+1}} _{X} \left( \frac {\frac m {r^2} - \lambda - m \theta} {\omega - \theta} H - \frac 1 {r^2} \overline r \right) =& R(X, \xi) \xi - \frac 1 {r^2} X \\
						=& \frac 1 {r^2} \langle \xi, \xi \rangle X - \frac 1 {r^2} \langle \xi, X \rangle \xi - \frac 1 {r^2} X \\
						=& 0.
		\end{align*}
		We conclude that the $2$-planes determined by the integral curves of $\xi$ have the same vector space, thus are parallel.
		
		This does not ensure the splitting of $\mathbb R^{n+1}$. But, if we can apply the extrinsic decomposition Theorem of Moore, then we will have $\mathbb R^{n + 1} = \mathbb R^{n_0} \oplus \mathbb R^{n - n_0 - d + 1} \oplus \mathbb R^d$ and, since the vector spaces of $\mathbb R^{n+1}$ generated by $\xi (p)$, for any $p \in M$, have dimension $2$, we will obtain $d = 2$.
	
		We consider the immersion
		\begin{equation*}
			\varphi : L \times I \to \mathbb S^n (r)
		\end{equation*}
		and
		\begin{equation*}
			\widetilde \varphi = \iota \circ \varphi : L \times I \to \mathbb R^{n+1}.
		\end{equation*}
		For any $X \in C\left( TL \right)$ and $Y \in C \left( TI \right)$ we have
		\begin{equation*}
			\nabla ^{\mathbb R^{n+1}} _X Y = \nabla ^{\mathbb S^n (r)} _X Y - \frac 1 {r^2} \langle X, Y \rangle \overline r = \nabla _X Y + B(X, Y) = B(X, Y).
		\end{equation*}
		On the other hand,
		\begin{equation*}
			\nabla ^{\mathbb R^{n+1}} _X Y = \nabla _X Y + \widetilde B(X, Y) = \widetilde B(X, Y),
		\end{equation*}
		where $\widetilde B$ denotes the second fundamental form of $\widetilde \varphi$.
		Thus 
		\begin{equation*}
			B(X, Y) = \widetilde B(X, Y).
		\end{equation*}
		Since 
		\begin{equation*}
			\left\langle \widetilde B(X, Y), \frac H {|H|} \right\rangle = \frac 1 {|H|} \langle B(X, Y), H \rangle = \frac 1 {|H|} \langle A_H X, Y \rangle = \frac \theta {|H|} \langle X, Y \rangle = 0
		\end{equation*}
		and
		\begin{equation*}
			\langle \widetilde B(X, Y), \eta_a \rangle = \langle B(X, Y), \eta_a \rangle = \langle  X, A_{\eta_a} Y \rangle = 0,\quad \forall a \in \overline {1, k},
		\end{equation*}
		we get 
		\begin{equation*}
			\widetilde B(X, Y) = 0, \quad \forall X \in C \left( TL \right)\ \text{and}\ \forall Y \in C(TI).
		\end{equation*}
		In these conditions we can apply the extrinsic decomposition Theorem of Moore obtaining
		\begin{equation*}
			\mathbb R^{n+1} = \mathbb R^{n_0} \oplus \mathbb R^{n-n_0-1} \oplus \mathbb R^2
		\end{equation*}
		and $ L \times I = U = \widetilde \varphi (U)$, where $L^{m-1} \subset \mathbb R^{n-n_0-1}$ and $I$, identified with its image, is (an open subset of) a circle of radius $1 / \alpha$.
		
		Now, suppose $n_0 = 1$, then
		\begin{equation*}
			\widetilde \varphi = \left( v_0, \widetilde \varphi_1, \widetilde \varphi_2 \right).
		\end{equation*}
		Since $U \subset \mathbb S^n (r)$, we obtain that $U$ lies in a small hypersphere $\mathbb S^{n-1} (a)$ in $\mathbb S^n(r)$ and from Theorem \ref{th:lambdaBiharmonicSubmanifoldsThatLieInHyperspheres} we obtain that 
		\begin{equation*}
			|H|^2 = \frac 1 {r^2} - \frac \lambda m,
		\end{equation*}
		which contradicts the hypotheses. Thus 
		\begin{equation*}
			\mathbb R^{n+1} = \mathbb R^{n-1} \oplus \mathbb R^2.
		\end{equation*}
		Note that the center of the image of $I$ is the origin of $\mathbb R^2$. Indeed, for $p = (q, \widetilde s)$ from the image of $L \times I$,
		$$
		|q|^2 + |\widetilde s|^2 = r^2,
		$$
		which implies that $|q|^2 = constant$ on $L$ and $|\widetilde s|^2 = constant$ on $I$, i.e. the center of the image of $I$, which is a circle, is the origin of $\mathbb R^2$.
		
		Because $L \subset \mathbb R^{n-1}$ and $|q| = constant$ for any $q \in L$, we have that 
		$$
		L \subset \mathbb S^{n-2} \left( \sqrt{r^2 - \frac 1 {\alpha^2}} \right) = \mathbb S^{n-2} \left( \frac {\kappa r^2} {\sqrt{1 + \kappa^2r^2}} \right) \subset \mathbb R^{n-1}.
		$$
		We show that $L$ is pseudo-umbilical and $PMC$ in $\mathbb S^n (r)$. For that we denote by $\varphi_1$ the inclusion of $L$ (through $\varphi$) in $\mathbb S^n (r)$ and, since $L$ is totally geodesic in $U = L \times I$, we have 
		\begin{align*}
			B^{\varphi_1} (E_\ell, E_\ell) =& \nabla ^{\mathbb S^n(r)} _{E_\ell} E_\ell - \nabla ^L _{E_\ell} E_\ell\\
									 =& B(E_\ell, E_\ell) + \nabla _{E_\ell} E_\ell - \nabla _{E_\ell} E_\ell = B(E_\ell, E_\ell),
		\end{align*}
		thus 
		\begin{equation*}\label{eq:FormulaTau1}
			(m - 1) H^{\varphi_1} = \sum _{i=1} ^{m-1} B(E_\ell, E_\ell) = m H - B(\xi, \xi).
		\end{equation*}
		and from \eqref{eq:BXi} we get
		\begin{equation*}
			H^{\varphi_1} = \frac {\frac m {r^2} - \lambda - m \omega} {(m - 1)(\theta - \omega)} H.
		\end{equation*}
		Let $X \in C(TL)$. We have 
		\begin{align*}
			\nabla ^{\mathbb S^n (r)} _X H^{\varphi_1} =& \frac {\frac m {r^2} - \lambda - m \omega} {(m-1) (\theta - \omega)} \nabla ^{\mathbb S^n (r)} _X H\\
													   =& \frac {\frac m {r^2} - \lambda - m \omega} {(m-1) (\theta - \omega)} \left( \nabla ^\perp _X H - A_H X \right)\\
													   =& - \frac {\left( \frac m {r^2} - \lambda - m \omega \right) \theta} {(m-1) (\theta - \omega)} X.
		\end{align*}
		On the other hand,
		\begin{equation*}
			\nabla ^{\mathbb S^n (r)} _X H^{\varphi_1} = \nabla ^{\perp_{\varphi_1}} _X H^{\varphi_1} - A^{\varphi_1} _{H^{\varphi_1}} X,
		\end{equation*}
		and thus
		\begin{equation*}
			\left\{
			\begin{array}{l}
				\nabla^{\perp_{\varphi_1}} _X H^{\varphi_1} = 0\\
				A^{\varphi_1} _{H^{\varphi_1}} X = \frac {\left( \frac m {r^2} - \lambda - m \omega \right) \theta} {(m-1) (\theta - \omega)} X
			\end{array}
			\right.,
		\end{equation*}
		i.e. $\varphi_1$ is pseudo-umbilical and $PMC$ in $\mathbb S^n (r)$.
		Since $L$ is pseudo-umbilical and $PMC$ in $\mathbb S^n (r)$, it is pseudo-umbilical and $PMC$ in $\mathbb R^{n+1}$. This is equivalent to $L$ being pseudo-umbilical and $PMC$ in $\mathbb R^{n-1}$, which lastly, is equivalent to $L$ being pseudo-umbilical and $PMC$ in $\mathbb S^{n-2} (r_1)$.
		
		Using a result from \cite{ChenYano1972}, either $L$ lies as a minimal submanifold in $\mathbb S^{n-2} (r_1)$, or it lies minimal in a hypersphere of $\mathbb S^{n-2} (r_1)$. If $L$ lied in a hypersphere of $\mathbb S^{n-2} (r_1)$, then $L \times I$ would lie in a hypersphere of $\mathbb S^n (r)$ thus, according to Theorem \ref{th:lambdaBiharmonicSubmanifoldsThatLieInHyperspheres}, $|H| = \sqrt {1 / r^2 - \lambda / m}$, which is contradiction.
		
		Therefore, $L$ lies as a minimal submanifold of $\mathbb S^{n-2} (r_1)$ and the local version of our result is proved.
		
		Moreover, we can see that the splitting of $\mathbb R^{n+1}$ does not depend on the point $p_0$. Therefore the projection of the whole $\varphi (M)$ on $\mathbb R^2$ is an open subset of $\mathbb S^1 \left( r / \sqrt{1 + \kappa^2 r^2}\right)$ or the entire circle.
		
		It is not difficult to check that, conversely, if $M = L \times I$, where $L$ is minimal in $\mathbb S^{n-2} (r_1)$, $I$ is identified with $\mathbb S^1 (r_2)$ and $r_1$ and $r_2$ are given by \eqref{eq:Radius1}, \eqref{eq:Radius2}, then $|H|^2 = x_2$. Similarly, if $r_1$ and $r_2$ are given by \eqref{eq:Radius3}, \eqref{eq:Radius4} and $\lambda > 0$, then $|H|^2 = x_1$.
		
		For the global version of the result, when $M$ is complete and simply connected, we simply mention that the decomposition Theorem of de Rham holds globally.
	\end{proof}
	
	\begin{corollary}\label{th:HForPMCLambdaBihamonicHypersurfaces}
		Let $\varphi : M^m \to \mathbb S^{m+1} (r)$ be a $CMC$ proper $\lambda$-biharmonic hypersurface with $m>2$. Then 
		\begin{equation*}
			|H|^2 \in \left\{
			\begin{array}{ll}
				\left(0, x_2 \right] \cup \left \{ \frac 1 {r^2} - \frac \lambda m \right \} , & \text{if}\ \lambda \leq 0\\[5pt]
				[x_1, x_2] \cup \left \{ \frac 1 {r^2} - \frac \lambda m \right \} , & \text{if}\ \lambda \in \left( 0, \frac {m - 2\sqrt{m-1}} {r^2} \right)\\[5pt]
				\left\{ x_2, \frac 1 {r^2} - \frac \lambda m \right \} , & \text{if}\ \lambda = \frac {m - 2\sqrt{m-1}} {r^2} 
			\end{array}
			\right. ,
		\end{equation*}
		Moreover, we have
		\begin{enumerate}
			\item $|H|^2 = 1/r^2 - \lambda / m$ if and only if $\varphi (M)$ is an open subset of the small hypersphere $\mathbb S^m ( r / \sqrt {2 - (\lambda / m) r^2 } )$. 
			\item $|H|^2 = x_2$ if and only if $\varphi (M)$ is an open subset of the extrinsic product $\mathbb S^{m-1} (r_1) \times \mathbb S^1 (r_2)$, where
			\begin{align*}
				& r_1^2 = \frac {3m - 2 - \lambda r^2 - \sqrt{(m - \lambda r^2)^2 - 4(m - 1)}} {2(2m - \lambda r^2)} r^2, \\
				& r_2^2 = \frac {m + 2 - \lambda r^2 + \sqrt{(m - \lambda r^2)^2 - 4(m - 1)}} {2(2m - \lambda r^2)} r^2 .
			\end{align*}
			\item  if $\lambda>0$, then $|H|^2 = x_1$ if and only if $\varphi (M)$ is an open subset of the extrinsic product $\mathbb S^{m-1} (r_1) \times \mathbb S^1 (r_2)$, where
			\begin{align*}
				& r_1^2 = \frac {3m - 2 - \lambda r^2 + \sqrt{(m - \lambda r^2)^2 - 4(m - 1)}} {2(2m - \lambda r^2)} r^2, \\
				& r_2^2 = \frac {m + 2 - \lambda r^2 - \sqrt{(m - \lambda r^2)^2 - 4(m - 1)}} {2(2m - \lambda r^2)} r^2 .
			\end{align*}
		\end{enumerate}
	\end{corollary}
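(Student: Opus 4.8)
The plan is to deduce the corollary from the two main theorems on $\lambda$-biharmonic immersions by exploiting the special feature of codimension one: a \emph{CMC} hypersurface is automatically \emph{PMC}. Indeed, since $\varphi$ is proper $\lambda$-biharmonic it is non-minimal, and being $CMC$ means $|H|$ is a nonzero constant; choosing locally a unit normal section $\eta$ we have $H=|H|\,\eta$, and because $\nabla^\perp\eta=0$ in a one-dimensional normal bundle (as $\langle\nabla^\perp_X\eta,\eta\rangle=\tfrac12 X\langle\eta,\eta\rangle=0$) we obtain $\nabla^\perp H=0$. Thus $\varphi$ is $PMC$ and both Theorem \ref{th:CMCLambdaBiharmonicImmersion} and Theorem \ref{th:HForPMCLambdaBiharmonicImmersions} become available.

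First I would invoke Theorem \ref{th:CMCLambdaBiharmonicImmersion} to get $\lambda<m/r^2$ and $|H|^2\in(0,1/r^2-\lambda/m]$, together with the fact that the upper endpoint is attained exactly when $M$ is pseudo-umbilical and lies minimally in the small hypersphere of radius $\sqrt{mr^2/(2m-\lambda r^2)}$ in $\mathbb S^{m+1}(r)$. Since for a hypersurface this small hypersphere is $\mathbb S^m$ and $\sqrt{mr^2/(2m-\lambda r^2)}=r/\sqrt{2-(\lambda/m)r^2}$, this already yields the isolated value $|H|^2=1/r^2-\lambda/m$ adjoined in each of the three displayed cases and proves part (1), the matching of dimensions ($\dim M=m=\dim\mathbb S^m$) forcing $\varphi(M)$ to be an open subset of that hypersphere.

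It remains to treat the subinterval $|H|^2\in(0,1/r^2-\lambda/m)$. Here I would apply Theorem \ref{th:HForPMCLambdaBiharmonicImmersions} verbatim: it gives $\lambda\le(m-2\sqrt{m-1})/r^2$ and the three ranges $(0,x_2]$, $[x_1,x_2]$, $\{x_2\}$ according to the sign and size of $\lambda$, which, unioned with the endpoint produced above, are precisely the sets displayed in the corollary. For the extremal values $|H|^2=x_2$ (any admissible $\lambda$) and $|H|^2=x_1$ (only $\lambda>0$), the theorem yields the local splitting $M=L\times I$ with $\varphi_1:L\to\mathbb S^{n-2}(r_1)$ minimal and $\varphi_2:I\to\mathbb S^1(r_2)$ an open arc, together with the radius formulas \eqref{eq:Radius1}--\eqref{eq:Radius4}. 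Specializing $n=m+1$ gives $\mathbb S^{n-2}(r_1)=\mathbb S^{m-1}(r_1)$, and since $\dim L=m-1=\dim\mathbb S^{m-1}(r_1)$ a minimal submanifold of full dimension is totally geodesic, hence an open subset of $\mathbb S^{m-1}(r_1)$; consequently $\varphi(M)$ is an open subset of the extrinsic product $\mathbb S^{m-1}(r_1)\times\mathbb S^1(r_2)$, which establishes the direct implications in parts (2) and (3). The converse implications follow from the converse recorded at the end of the proof of Theorem \ref{th:HForPMCLambdaBiharmonicImmersions} (equivalently, from the computation in Proposition \ref{th:ExampleProductSphereCircle}, which shows these products are $CMC$ proper $\lambda$-biharmonic with exactly the prescribed radii and mean curvature).

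No step presents a serious obstacle; the entire content is the observation that $CMC$ together with codimension one forces $PMC$, after which the two theorems apply directly. The only point requiring a little care is the dimension bookkeeping in the last paragraph---recognising that, in codimension one, the factor $\mathbb S^{n-2}$ becomes $\mathbb S^{m-1}$ and the minimal leaf $L$ collapses to a full-dimensional, hence open, subset of $\mathbb S^{m-1}(r_1)$---together with the elementary algebraic simplification of the small-hypersphere radius in part (1).
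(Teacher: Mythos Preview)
Your proposal is correct and matches the paper's intended approach: the paper states this as a corollary with no proof, signalling that it is meant to follow immediately from Theorem~\ref{th:CMCLambdaBiharmonicImmersion} and Theorem~\ref{th:HForPMCLambdaBiharmonicImmersions} once one observes that a $CMC$ hypersurface is automatically $PMC$. Your dimension bookkeeping (the minimal $(m-1)$-dimensional $L$ in $\mathbb S^{m-1}(r_1)$ being an open subset, and the radius simplification in part~(1)) is exactly the specialization required.
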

	
	\section{Biharmonic product submanifolds}
	
	Now, we come back to the main objective of our paper, that is to find a larger gap of the mean curvature of certain $PMC$ biharmonic submanifolds, and consider two non-minimal immersions $\varphi_1 : M_1^{m_1} \to \mathbb S^{n_1} (r_1)$, $m_1 > 2$, and $\varphi_2 : M_2^{m_2} \to \mathbb S^{n_2} (r_2)$, $m_2 > 2$, such that $M^m = M_1^{m_1} \times M_2^{m_2}$ is proper biharmonic in $\mathbb S^n$, i.e. $\iota \circ \varphi = \iota \circ (\varphi_1 \times \varphi_2)$ is a proper biharmonic immersion, where $\iota$ is the canonical inclusion of $\mathbb S^{n_1} (r_1) \times \mathbb S^{n_2} (r_2)$ in $\mathbb S^n$ and $n_1 + n_2 = n - 1$.
	
	Recall from Corollary \ref{th:CorolComponentsAreCMC} that $M_1$ and $M_2$ must be $CMC$. Also we know, from equation \eqref{eq:biharmonicitySubmanifoldTorusR}, that $\lambda_1 = 2 r_2^2 (m_1 / r_1^2 - m_2 / r_2^2 )$ and $\lambda_2 = 2 r_1^2 ( m_2 / r_2^2 - m_1 / r_1^2 )$.
	
	According to Theorem \ref{th:CMCLambdaBiharmonicImmersion}, the radii $r_1$ and $r_2$ must obey the following constraints
	\begin{equation*}
		\frac {m_1} {2m} < r_1^2 < \frac {2 m_1 + m_2} {2m} \quad \text{and} \quad \frac {m_2} {2m} < r_2^2 < \frac {m_1 + 2 m_2} {2 m}.
	\end{equation*}
	
	Moreover, if $M_1$ and $M_2$ are $PMC$, according to Theorem \ref{th:HForPMCLambdaBiharmonicImmersions}, we obtain even more restrictive ranges for the radii $r_1$ and $r_2$, that is
	\begin{align}
		&\frac {m_1 + 2\sqrt{m_1 - 1}} {2m} \leq r_1^2 \leq \frac {2m_1 + m_2 - 2 \sqrt {m_2 - 1}} {2m}, \label{eq:restrictionsRadiusR1}\\
		&\frac {m_2 + 2\sqrt{m_2 - 1}} {2m} \leq r_2^2 \leq \frac {m_1 + 2m_2 - 2 \sqrt {m_1 - 1}} {2m}. \label{eq:restrictionsRadiusR2}
	\end{align}
	Now, from the last equation of \eqref{eq:biharmonicitySubmanifoldTorusR} we infer the following result which also holds in the case of surfaces.
	
	\begin{proposition}\label{th:H1MaxEquivH2Max}
		Let $\varphi_1 : M_1^{m_1} \to \mathbb S^{n_1} (r_1)$ and $\varphi_2 : M_2^{m_2} \to \mathbb S^{n_2} (r_2)$ be two non-minimal immersions such that $M^m = M_1^{m_1} \times M_2^{m_2}$ is proper biharmonic in $\mathbb S^n$, where $n_1 + n_2 = n - 1$. Then 
		\begin{equation*}
			\left |H^{\varphi_1} \right |^2 = \frac 1 {r_1^2} - \frac {\lambda_1} {m_1} \quad \text{if and only if} \quad \left | H^{\varphi_2} \right |^2 = \frac 1 {r_2^2} - \frac {\lambda_2} {m_2}.
		\end{equation*}
		In this case, $\left | H^{\iota \circ \varphi} \right |^2 = 1$.
	\end{proposition}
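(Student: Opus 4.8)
The plan is to exploit the last equation of \eqref{eq:biharmonicitySubmanifoldTorusR} together with the identity $|\tau(\varphi_i)|^2 = m_i^2 |H^{\varphi_i}|^2$, which holds because $\tau(\varphi_i) = m_i H^{\varphi_i}$. Substituting this into that equation turns it into a single affine relation between the two quantities $|H^{\varphi_1}|^2$ and $|H^{\varphi_2}|^2$, namely
\[
\frac{m_1^2}{r_1^2}\left|H^{\varphi_1}\right|^2 - \frac{m_2^2}{r_2^2}\left|H^{\varphi_2}\right|^2 + (r_2^2 - r_1^2)\left(\frac{m_2}{r_2^2} - \frac{m_1}{r_1^2}\right)^2 = 0.
\]
Since the coefficients of both $|H^{\varphi_1}|^2$ and $|H^{\varphi_2}|^2$ are nonzero, this relation determines either mean curvature from the other.

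Because the relation is affine and invertible, the asserted equivalence reduces to verifying that the single pair of maximal values $(1/r_1^2 - \lambda_1/m_1,\, 1/r_2^2 - \lambda_2/m_2)$ satisfies it. First I would substitute the explicit expressions $\lambda_1 = 2r_2^2(m_1/r_1^2 - m_2/r_2^2)$ and $\lambda_2 = 2r_1^2(m_2/r_2^2 - m_1/r_1^2)$ and compute, writing $A = m_1/r_1^2$ and $B = m_2/r_2^2$, that
\[
\frac{m_1^2}{r_1^2}\left(\frac{1}{r_1^2} - \frac{\lambda_1}{m_1}\right) = A^2(r_1^2 - r_2^2) + 2r_2^2 AB, \qquad \frac{m_2^2}{r_2^2}\left(\frac{1}{r_2^2} - \frac{\lambda_2}{m_2}\right) = B^2(r_2^2 - r_1^2) + 2r_1^2 AB,
\]
using $r_1^2 + r_2^2 = 1$. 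Subtracting, the cross terms combine to $(r_1^2 - r_2^2)(A - B)^2$, which is exactly the negative of the third term $(r_2^2 - r_1^2)(B - A)^2$. Hence the pair of maxima satisfies the affine relation, so $|H^{\varphi_1}|^2 = 1/r_1^2 - \lambda_1/m_1$ forces $|H^{\varphi_2}|^2 = 1/r_2^2 - \lambda_2/m_2$; the converse is the same computation with the indices interchanged.

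For the final assertion I would compute $H^{\iota\circ\varphi}$ by passing through $\mathbb R^{n+1}$. Writing $\overline r = (\overline r_1, \overline r_2)$ for the position vector, with $|\overline r_i|^2 = r_i^2$ and $|\overline r|^2 = 1$, the tension field of $M$ as a map into $\mathbb R^{n+1}$ splits as $\bigl(m_1 H^{\varphi_1} - (m_1/r_1^2)\overline r_1,\, m_2 H^{\varphi_2} - (m_2/r_2^2)\overline r_2\bigr)$, since each $\mathbb S^{n_i}(r_i)$ is totally umbilical in $\mathbb R^{n_i+1}$ with shape operator $-(1/r_i^2)\id$. Removing the component along $\overline r$ (the unit normal of $\mathbb S^n$) yields
\[
m\, H^{\iota\circ\varphi} = \left(m_1 H^{\varphi_1} + \Bigl(m - \tfrac{m_1}{r_1^2}\Bigr)\overline r_1,\; m_2 H^{\varphi_2} + \Bigl(m - \tfrac{m_2}{r_2^2}\Bigr)\overline r_2\right).
\]
Taking squared norms, using $H^{\varphi_i} \perp \overline r_i$ and $|\overline r_i|^2 = r_i^2$, then inserting the maximal values through the two identities above and simplifying with $m = m_1 + m_2$ and $r_1^2 + r_2^2 = 1$, I expect all terms to collapse to $m^2\,|H^{\iota\circ\varphi}|^2 = m^2$, that is $|H^{\iota\circ\varphi}|^2 = 1$. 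The main obstacle is purely bookkeeping: correctly assembling the three contributions to $H^{\iota\circ\varphi}$ (the two intrinsic mean curvatures and the curvature of $\mathbb S^{n_1}(r_1) \times \mathbb S^{n_2}(r_2)$ inside $\mathbb S^n$) and tracking the cancellations in the final squared-norm computation.
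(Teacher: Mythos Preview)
Your proposal is correct and follows essentially the same route as the paper. The paper also derives the equivalence directly from the last equation of \eqref{eq:biharmonicitySubmanifoldTorusR} and then obtains $|H^{\iota\circ\varphi}|^2=1$ from the identity \eqref{eq:NormHMareGeneral}; your derivation of $H^{\iota\circ\varphi}$ via $\mathbb R^{n+1}$ yields exactly that formula (your extra term $(m - m_i/r_i^2)^2 r_i^2$ summed over $i$ equals the paper's $r_1^2 r_2^2(m_2/r_2^2 - m_1/r_1^2)^2$ once you use $m = r_1^2(m_1/r_1^2) + r_2^2(m_2/r_2^2)$), and your explicit verification that the pair of maxima satisfies the affine relation is just a spelled-out version of what the paper leaves implicit.
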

	\begin{proof}
		In order to prove the last part, we take into account the second fundamental form of $\mathbb S ^{n_1} (r_1) \times \mathbb S^{n_2} (r_2)$ in $\mathbb S^n$ and prove that the tension field of $\iota \circ \varphi$ is given by
		\begin{equation*}\label{eq:normOfTau}
			|\tau (\iota \circ \varphi)|^2 = |\tau(\varphi_1)|^2 + |\tau (\varphi_2)|^2 + r_1^2 r_2^2 \left ( \frac {m_2} {r_2^2} - \frac {m_1} {r_1^2} \right )^2,
		\end{equation*}
		equivalently
		\begin{equation}\label{eq:NormHMareGeneral}
			\left |H^{\iota \circ \varphi}\right |^2 = \frac {m_1^2} {m^2} \left | H^{\varphi_1} \right |^2 + \frac {m_2^2} {m^2} \left | H^{\varphi_2} \right |^2 + \frac {r_1^2 r_2^2} {m^2} \left ( \frac {m_2} {r_2^2} - \frac {m_1} {r_1^2} \right )^2.
		\end{equation}
		Thus, in our case $\left |H^{\iota \circ \varphi} \right |^2 = 1$.
	\end{proof}
	
	\begin{remark}\label{th:RemarkH1MaxIffH2Max}
		In the hypotheses of the above theorem we have that either both $M_1$ and $M_2$ are pseudo-umbilical or none.
	\end{remark}
	\begin{remark}
		We note that the above result holds without any (further) restrictions on $r_1^2$ and $r_2^2$. Now, consider $\varphi_1 : M_1^{m_1} \to \mathbb S^{n_1} (r_1)$, $m_1 > 2$, and $\varphi_2 : M_2^{m_2} \to \mathbb S^{n_2} (r_2)$, $m_2 > 2$ be two $PMC$ and non-pseudo-umbilical immersions such that $M^m = M_1^{m_1} \times M_2^{m_2}$ is proper biharmonic in $\mathbb S^n$, $n_1 + n_2 = n - 1$. If 
		\begin{equation*}
			\left | H^{\varphi_1} \right |^2 = x_{2,1} \quad \text{and} \quad \left | H^{\varphi_2} \right |^2 = x_{2,2}
		\end{equation*}
		then, using the last equation of the system from equation \eqref{eq:biharmonicitySubmanifoldTorusR}, the radius $r_1$ must satisfy the following equation
		\begin{align}\label{eq:EquationFirstRadius}
			& \left ( 1 - r_1^2 \right ) \frac { (m_1 - 2)^2 - 2 m_1 \left ( m_1 - m r_1^2 \right ) + (m_1 - 2) \sqrt {\left ( 2 m r_1^2 - m_1 \right )^2 - 4 (m_1 - 1)}} {2 r_1 ^2}  \\
			- & r_1 ^2 \frac { (m_2 - 2)^2 - 2 m_2 \left ( m r_1^2 - m_1 \right ) + (m_2 - 2) \sqrt { \left ( m_2 + 2 m_1 - 2 m r_1^2 \right )^2 - 4(m_2 - 1)}} {2 \left ( 1 - r_1^2 \right )} \notag \\
			+ & \left ( 1 - 2r_1^2 \right ) \left ( \frac {r_1^2} {1 - r_1^2} m_2^2 - 2 m_1 m_2 + \frac {1 - r_1^2} {r_1^2} m_1^2 \right ) = 0 \notag
		\end{align}
		It seems difficult to solve equation \eqref{eq:EquationFirstRadius} and to find its solutions that belong to the interval $(0, 1)$. As we will see in Theorem \ref{th:MainTheorem}, in order to overcome this problem we will use the fact that, when $\left | H^{\varphi_1} \right |^2 = x_{2,1}$ and $\left | H^{\varphi_2} \right |^2 = x_{2,2}$, the manifolds $M_1$ and $M_2$ split as $M_1 = L_1 \times \mathbb S^1 (b_1)$ and $M_2 = L_2 \times \mathbb S^1 (b_2)$. Then we will use a technique similar to that developed in \cite{BalmusMontaldoOniciuc2009} and \cite{Zhang2011} (see also \cite{WangWu2012}).
	\end{remark}
	
	From Proposition \ref{th:ProperBiharmonicSubmanifoldsInSpheresGeneral} and Proposition \ref{th:H1MaxEquivH2Max}, we can deduce the following result. 
	
	\begin{theorem}\label{th:CharacterizationNormHIs1}
		Let $\varphi_1 : {M_1^{m_1}} \to {\mathbb S ^{n_1}(r_1)}$ and $\varphi_2 : {M_2^{m_2}} \to {\mathbb S^{n_2}(r_2)}$ be two non-minimal immersions such that $M^m = M_1^{m_1} \times M_2^{m_2}$ is proper biharmonic in $\mathbb S^{n}$, $r_1^2 + r_2^2 = 1$ and $n_1 + n_2 = n - 1$. Then $|H^{\iota\circ \varphi}| = 1$ if and only if $\varphi_1$ and $\varphi_2$ are $PMC$ and pseudo-umbilical. Moreover, in this case, we have a codimension reduction result, that is, $M$ is minimal in $\mathbb S^{n-2} (1 / \sqrt 2) \subset \mathbb S^{n-1}$.
	\end{theorem}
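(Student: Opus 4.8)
The plan is to reduce everything to the two factors through formula \eqref{eq:NormHMareGeneral}, using the mean curvature bound for each factor. By Corollary \ref{th:CorolComponentsAreCMC} both $\varphi_1$ and $\varphi_2$ are $CMC$, and from \eqref{eq:biharmonicitySubmanifoldTorusR} each $\varphi_i$ is proper $\lambda_i$-biharmonic with $\lambda_1 = 2r_2^2(m_1/r_1^2 - m_2/r_2^2)$ and $\lambda_2 = 2r_1^2(m_2/r_2^2 - m_1/r_1^2)$. Theorem \ref{th:CMCLambdaBiharmonicImmersion} therefore applies to each factor and yields $\left|H^{\varphi_i}\right|^2 \le 1/r_i^2 - \lambda_i/m_i$, with equality precisely when $\varphi_i$ is $PMC$ and pseudo-umbilical. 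For the implication ``$\Leftarrow$'', if both factors are $PMC$ and pseudo-umbilical, the equality case gives $\left|H^{\varphi_i}\right|^2 = 1/r_i^2 - \lambda_i/m_i$ for $i \in \{1,2\}$, and Proposition \ref{th:H1MaxEquivH2Max} immediately yields $\left|H^{\iota\circ\varphi}\right|^2 = 1$.

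For the implication ``$\Rightarrow$'', suppose $\left|H^{\iota\circ\varphi}\right| = 1$. Evaluating \eqref{eq:NormHMareGeneral} at the maximal values $1/r_i^2 - \lambda_i/m_i$ produces the value $1$ --- this is the content of Proposition \ref{th:H1MaxEquivH2Max}, and can be verified directly using $r_1^2 + r_2^2 = 1$ and $m_1 + m_2 = m$. Subtracting that identity from \eqref{eq:NormHMareGeneral} evaluated at the true mean curvatures gives
\begin{equation*}
0 = \frac{m_1^2}{m^2}\left( \frac{1}{r_1^2} - \frac{\lambda_1}{m_1} - \left|H^{\varphi_1}\right|^2 \right) + \frac{m_2^2}{m^2}\left( \frac{1}{r_2^2} - \frac{\lambda_2}{m_2} - \left|H^{\varphi_2}\right|^2 \right).
\end{equation*}
Both summands are non-negative by Theorem \ref{th:CMCLambdaBiharmonicImmersion}, so each vanishes; hence both factors attain their maximal mean curvature and are $PMC$ and pseudo-umbilical, completing the equivalence.

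For the codimension reduction, assume $\left|H^{\iota\circ\varphi}\right| = 1$, so by the equality case of Theorem \ref{th:CMCLambdaBiharmonicImmersion} each $M_i$ is minimal in a small hypersphere $\mathbb S^{n_i-1}(\rho_i) \subset \mathbb S^{n_i}(r_i)$ with $\rho_i = \sqrt{m_i r_i^2/(2m_i - \lambda_i r_i^2)}$. Substituting $\lambda_i$ one finds $2m_i - \lambda_i r_i^2 = 2m r_i^2$, so $\rho_i^2 = m_i/(2m)$ and $\rho_1^2 + \rho_2^2 = 1/2$. After an isometry of each $\mathbb R^{n_i+1}$ this small hypersphere lies in an affine hyperplane orthogonal to a fixed axis at distance $c_i$ from the origin, with $c_i^2 = r_i^2 - \rho_i^2 = r_i^2 - m_i/(2m) > 0$; consequently $M = M_1 \times M_2$ is contained in a small hypersphere $\mathbb S^{n-1}(a) \subset \mathbb S^n$ of radius $a = \sqrt{1 - c_1^2}$, obtained by slicing $\mathbb S^n$ with the corresponding hyperplane of $\mathbb R^{n+1}$. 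The bound $\lambda_2 < m_2/r_2^2$ from Theorem \ref{th:CMCLambdaBiharmonicImmersion} applied to $\varphi_2$ is equivalent to $r_2^2 > m_2/(2m)$, hence $r_1^2 < (2m_1 + m_2)/(2m)$ and $c_1^2 < 1/2$, giving $a > 1/\sqrt 2$. Proposition \ref{th:ProperBiharmonicSubmanifoldsInSpheresGeneral} in the case $a > 1/\sqrt 2$ then shows that $M$ is minimal in a totally geodesic $\mathbb S^{n-2}(1/\sqrt 2) \subset \mathbb S^{n-1} \subset \mathbb S^n$, the asserted reduction.

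The step I expect to be the main obstacle is this codimension reduction, and specifically the strict inequality $a > 1/\sqrt 2$: the borderline value $a = 1/\sqrt 2$ would only place $M$ in the first alternative of Proposition \ref{th:ProperBiharmonicSubmanifoldsInSpheresGeneral} and yield no further reduction. The delicate point is that one must use the $CMC$ radius bounds coming from Theorem \ref{th:CMCLambdaBiharmonicImmersion} on each factor rather than the sharper $PMC$ restrictions \eqref{eq:restrictionsRadiusR1}--\eqref{eq:restrictionsRadiusR2}, since the latter were derived under the strict condition $|H|^2 < 1/r^2 - \lambda/m$ and are unavailable at the pseudo-umbilical boundary where equality holds.
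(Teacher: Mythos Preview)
Your proof is correct. For the equivalence you spell out explicitly what the paper leaves implicit (the paper simply cites Proposition~\ref{th:H1MaxEquivH2Max} and Proposition~\ref{th:ProperBiharmonicSubmanifoldsInSpheresGeneral} and moves on): your non-negativity argument for the forward direction, writing $|H^{\iota\circ\varphi}|^2=1$ as a sum of two non-negative defects via \eqref{eq:NormHMareGeneral}, is exactly the right way to make this rigorous.

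For the codimension reduction your route differs slightly from the paper's. The paper uses \emph{both} constant coordinates at once: writing $M\subset\mathbb S^{n_1-1}(a_1)\times\mathbb S^{n_2-1}(a_2)\times\{(b_1,b_2)\}$ with $a_i^2=m_i/(2m)$ and $b_i^2=r_i^2-m_i/(2m)$, one has $b_1^2+b_2^2=1/2$, so a rotation of the $(b_1,b_2)$-plane sends $(b_1,b_2)$ to $(1/\sqrt 2,0)$ and places $M$ directly inside $\mathbb S^{n-2}(1/\sqrt 2)\subset\mathbb S^{n-1}$. Your argument instead slices by only one of the two hyperplanes, lands in $\mathbb S^{n-1}(a)$ with $a=\sqrt{1-c_1^2}$, and then invokes Proposition~\ref{th:ProperBiharmonicSubmanifoldsInSpheresGeneral} as a black box. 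This is valid, and your verification of the strict inequality $a>1/\sqrt 2$ from $\lambda_2<m_2/r_2^2$ (equivalently $r_2^2>m_2/(2m)$, hence $c_1^2<1/2$) is correct. The paper's two-slice rotation is slightly more direct and sidesteps the boundary check you flag as delicate, while your approach has the virtue of reducing everything to an already-proved classification result.
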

	
	\begin{proof}
		It remains to prove only the last part of the theorem. Indeed, from Theorem \ref{th:CMCLambdaBiharmonicImmersion} and up to isometries of $\mathbb R^{n_1 + 1}$ and $\mathbb R^{n_2 + 1}$, we can assume that $M_1$ is minimal in the hypersphere $\mathbb S^{n_1 - 1} (a_1) \times \{ b_1 \}$ of $\mathbb S^{n_1} (r_1)$ and $M_2$ is minimal in the hypersphere $\mathbb S^{n_2 - 1} (a_2) \times \{ b_2 \}$ of $\mathbb S^{n_2} (r_2)$, respectively, where 
		\begin{align*}
			a_i^2 = \frac {m_i} {2m} \quad \text{and} \quad b_i^2 = \frac {-m_i + 2m r_i^2} {2m}, \quad \forall i \in \{1, 2\}.
		\end{align*}
		Rotating $\mathbb S^n$, we can assume that 
		$$
		M = M_1 \times M_2 \subset \mathbb S^{n_1 - 1} (a_1) \times \mathbb S^{n_2 - 1} (a_2) \times \{ (b_1, b_2) \}.
		$$
		Since $b_1^2 + b_2^2 = 1/2$, we can assume that $b_1 = 1 / \sqrt 2$ and $b_2 = 0$ and we end the proof.
	\end{proof}
	
	\begin{corollary}
		Let $\varphi_1 : M_1^{m_1} \to \mathbb S^{m_1 + 1} (r_1)$ and $\varphi_2 : M_2 ^{m_2} \to \mathbb S^{m_2 + 1} (r_2)$ be two non-minimal hypersurfaces such that $M^m = M_1^{m_1} \times M_2^{m_2}$ is proper biharmonic in $\mathbb S^{m+3}$, $r_1^2 + r_2^2 = 1$. Then $\left | H^{\iota \circ \varphi} \right |^2 = 1$ if and only if, up to isometries of $\mathbb S^{m+3}$, $\varphi (M)$ is an open subset of the minimal extrinsic product
		$$
		\mathbb S^{m_1} \left ( \sqrt {\frac {m_1} {2m}} \right ) \times \mathbb S^{m_2} \left ( \sqrt {\frac {m_2} {2m}} \right ) \subset \mathbb S^{m+1} (1 / \sqrt 2) \subset \mathbb S^{m+2}.
		$$
	\end{corollary}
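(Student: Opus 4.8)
The plan is to deduce this corollary directly from Theorem~\ref{th:CharacterizationNormHIs1}, specialized to the hypersurface situation $n_1 = m_1 + 1$ and $n_2 = m_2 + 1$, so that $n = n_1 + n_2 + 1 = m + 3$ and the ambient sphere is $\mathbb S^{m+3}$. With this identification the codimension reduction of the theorem reads: $M$ is minimal in $\mathbb S^{n-2}(1/\sqrt 2) = \mathbb S^{m+1}(1/\sqrt 2) \subset \mathbb S^{m+2} = \mathbb S^{n-1}$, which is precisely the inclusion chain appearing in the statement.

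For the forward implication, I would start from $|H^{\iota\circ\varphi}|^2 = 1$ and invoke Theorem~\ref{th:CharacterizationNormHIs1} to obtain that both $\varphi_1$ and $\varphi_2$ are $PMC$ and pseudo-umbilical. The point specific to codimension one is that pseudo-umbilicity collapses to total umbilicity: writing $H = f\eta$ with $\eta$ a unit normal, one has $A_H = fA$, so the condition $A_H = |H|^2\,\id$ becomes $fA = f^2\,\id$, and since each $M_i$ is non-minimal ($f \neq 0$) this forces $A = f\,\id$. Thus each $\varphi_i(M_i)$ is an open subset of a small hypersphere $\mathbb S^{m_i}(a_i) \subset \mathbb S^{m_i+1}(r_i)$; crucially, because $M_i$ now has the same dimension $m_i$ as that small hypersphere, there is no room left for $M_i$ to be a proper minimal submanifold — it fills an open piece of $\mathbb S^{m_i}(a_i)$.

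Next I would pin down the radii by reusing the computation from the proof of Theorem~\ref{th:CharacterizationNormHIs1}, which gives $a_i^2 = m_i/(2m)$. Since $a_1^2 + a_2^2 = (m_1 + m_2)/(2m) = 1/2$, the product $\mathbb S^{m_1}(a_1)\times\mathbb S^{m_2}(a_2)$ lies in a sphere of radius $1/\sqrt 2$, and after the isometry of $\mathbb S^{m+3}$ used in the theorem we recover exactly the claimed extrinsic product $\mathbb S^{m_1}\!\left(\sqrt{m_1/(2m)}\right)\times\mathbb S^{m_2}\!\left(\sqrt{m_2/(2m)}\right)$, sitting minimally in $\mathbb S^{m+1}(1/\sqrt 2)$. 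For the converse, I would check directly that this product is minimal in $\mathbb S^{m+1}(1/\sqrt 2)$ — this is the standard generalized-Clifford minimality condition $a_i^2 = (m_i/m)R^2$ with $R^2 = 1/2$ — and then apply the first class of examples of Caddeo--Montaldo--Oniciuc \cite{CaddeoMontaldoOniciuc2002}: a submanifold minimal in the small hypersphere of radius $1/\sqrt 2$ is proper biharmonic with $|H| = 1$ in $\mathbb S^{m+2}$, hence, through the totally geodesic inclusion $\mathbb S^{m+2}\subset\mathbb S^{m+3}$, in $\mathbb S^{m+3}$ as well.

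I expect no genuine obstacle here, since essentially all the work is already packaged in Theorem~\ref{th:CharacterizationNormHIs1}. The only points requiring a little care are the reduction of ``pseudo-umbilical'' to ``totally umbilical'' in codimension one, the observation that in this case $M_i$ exhausts an open subset of the small hypersphere rather than being a proper minimal submanifold, and the elementary bookkeeping of the radii $a_i$ and of the final inclusion chain.
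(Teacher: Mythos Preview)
Your proposal is correct and follows exactly the approach implicit in the paper: the corollary is stated without proof immediately after Theorem~\ref{th:CharacterizationNormHIs1}, and your derivation --- specialize to $n_i = m_i + 1$, observe that a pseudo-umbilical non-minimal hypersurface is totally umbilical (equivalently, that $M_i$ minimal in $\mathbb S^{n_i - 1}(a_i) = \mathbb S^{m_i}(a_i)$ forces $M_i$ to be an open subset of it), and read off the radii $a_i^2 = m_i/(2m)$ from the proof of the theorem --- is precisely how the paper expects the reader to fill this in.
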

	
	The following result shows that there exists a gap for the range of the mean curvature of $PMC$ proper biharmonic submanifolds $M$ in $\mathbb S^n$, when $M = M_1 \times M_2$, and $M_i$ is $PMC$ and non-minimal in $\mathbb S^{n_i} (r_i)$, $i \in \{ 1, 2\}$. This gap is larger than $((m - 2) / m, 1)$ given by Theorem \ref{th:IntervalHPMCProperBiharmonicImmersions} and, more precisely, it is $((m-4) / m, 1)$.
	
	\begin{theorem}\label{th:MainTheorem}
		Let $\varphi_1 : {M_1^{m_1}} \to {\mathbb S ^{n_1}(r_1)}$ and $\varphi_2 : {M_2^{m_2}} \to {\mathbb S^{n_2}(r_2)}$ be two $PMC$ and non-pseudo-umbilical immersions, $m_1 > 2$, $m_2 > 2$, such that $M^m = M_1^{m_1} \times M_2^{m_2}$ is proper biharmonic in $\mathbb S^{n}$, $r_1^2 + r_2^2 = 1$ and $n_1 + n_2 = n - 1$. Then $|H^{\iota\circ \varphi}| \in \left ( 0, (m-4) / m \right ]$ and $\left |H^{\iota\circ \varphi}\right | = (m-4) / m$ if and only if $r_1^2 = (3m_1 + m_2 - 4) / (4(m-2))$ and locally, up to isometries of $\mathbb S^n$, $\varphi(M)$ is an open subset of the extrinsic product
		$$
		\tilde M_1 \times \mathbb S^1 \left (\frac 1 2 \right ) \times \mathbb S^1 \left ( \frac 1 2 \right ) \subset \mathbb S^{n-4} \left ( \frac 1 {\sqrt 2} \right ) \times \mathbb S^3 \left ( \frac 1 {\sqrt 2} \right ),
		$$
		where $\tilde M_1$ is minimal in $\mathbb S^{n-4} (1 / \sqrt 2)$ and the splitting of $\mathbb R^{n+1} = \mathbb R^{n-3} \times \mathbb R^4$ does not depend on the point of $M$. Moreover, if $M$ is complete, then the previous decomposition of $\varphi (M)$ is global, where $\tilde M_1$ is a complete minimal submanifold in $\mathbb S^{n-4} \left ( 1 / {\sqrt 2} \right )$.
	\end{theorem}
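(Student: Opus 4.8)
The plan is to combine the sharp single-factor bounds of Theorem~\ref{th:HForPMCLambdaBiharmonicImmersions} through the mean-curvature formula \eqref{eq:NormHMareGeneral}. By equation \eqref{eq:biharmonicitySubmanifoldTorusR} each $\varphi_i$ is $CMC$ and proper $\lambda_i$-biharmonic, with $\lambda_1 = 2r_2^2(m_1/r_1^2 - m_2/r_2^2)$ and $\lambda_2 = 2r_1^2(m_2/r_2^2 - m_1/r_1^2)$; writing $t = r_1^2$ and $r_2^2 = 1 - t$ one simplifies these to $\lambda_1 r_1^2 = 2(m_1 - mt)$ and $\lambda_2 r_2^2 = 2(mt - m_1)$. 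Since each $\varphi_i$ is $PMC$, non-pseudo-umbilical and $m_i > 2$, Theorem~\ref{th:CMCLambdaBiharmonicImmersion} gives the strict inequality $|H^{\varphi_i}|^2 < 1/r_i^2 - \lambda_i/m_i$, so we are in the range hypothesis of Theorem~\ref{th:HForPMCLambdaBiharmonicImmersions} and obtain $|H^{\varphi_i}|^2 \le x_{2,i}$ for $i \in \{1,2\}$, where $x_{2,i}$ is the larger root of the quadratic attached to the factor $\varphi_i$.

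First I would establish the inequality. As the right-hand side of \eqref{eq:NormHMareGeneral} is strictly increasing in each $|H^{\varphi_i}|^2$, inserting $|H^{\varphi_i}|^2 \le x_{2,i}$ yields $|H^{\iota\circ\varphi}|^2 \le g(t)$ for an explicit one-variable function $g$ on the range of $t$ cut out by \eqref{eq:restrictionsRadiusR1}--\eqref{eq:restrictionsRadiusR2}. The goal is then to prove $g(t) \le ((m-4)/m)^2$ with equality exactly at $t_\ast = (3m_1 + m_2 - 4)/(4(m-2))$. Rather than differentiating the square-root expressions, I would verify directly that $g(t_\ast) = ((m-4)/m)^2$ and that $t_\ast$ is the unique interior maximiser, which simultaneously pins down the critical radius.

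For the characterisation of equality I would argue structurally, in the spirit of \cite{BalmusMontaldoOniciuc2009} and \cite{Zhang2011}, so as to avoid solving \eqref{eq:EquationFirstRadius} directly. If $|H^{\iota\circ\varphi}| = (m-4)/m$ then both substitutions are equalities, so $|H^{\varphi_i}|^2 = x_{2,i}$ and $t = t_\ast$. By Theorem~\ref{th:HForPMCLambdaBiharmonicImmersions} each factor then splits locally as $M_i = L_i \times \mathbb{S}^1(b_i)$ with $L_i$ minimal in $\mathbb{S}^{n_i - 2}(a_i)$ and $a_i^2 + b_i^2 = r_i^2$; substituting $t = t_\ast$ into \eqref{eq:Radius1}--\eqref{eq:Radius2} I would compute $b_1 = b_2 = 1/2$ and $a_i^2 = (m_i - 1)/(2(m-2))$, whence $a_1^2 + a_2^2 = 1/2$ and $(m_1-1)/a_1^2 = (m_2-1)/a_2^2$. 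Regrouping $\mathbb{R}^{n+1} = (\mathbb{R}^{n_1-1} \oplus \mathbb{R}^{n_2-1}) \oplus (\mathbb{R}^2 \oplus \mathbb{R}^2) = \mathbb{R}^{n-3} \oplus \mathbb{R}^4$ and $M = (L_1 \times L_2) \times (\mathbb{S}^1(1/2) \times \mathbb{S}^1(1/2))$, these two relations say precisely that $\tilde M_1 := L_1 \times L_2$ is minimal in $\mathbb{S}^{n-4}(1/\sqrt 2)$ and that $\mathbb{S}^1(1/2) \times \mathbb{S}^1(1/2)$ is the minimal Clifford torus in $\mathbb{S}^3(1/\sqrt 2)$. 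As in the proof of Theorem~\ref{th:HForPMCLambdaBiharmonicImmersions}, I would then show that the normal directions carrying the two circle factors are parallel in $\mathbb{R}^{n+1}$ along $M$, so that Moore's decomposition theorem gives the claimed extrinsic product with a point-independent splitting $\mathbb{R}^{n+1} = \mathbb{R}^{n-3} \oplus \mathbb{R}^4$. The converse is the second class of examples: since $\dim \tilde M_1 = m-2 \ne 2$ (as $m \ge 6$), Corollary~\ref{th:CorollaryBiharmonicityProductOfSpheres} shows the product is proper biharmonic with $|H| = |(m-2)-2|/m = (m-4)/m$; and the global statement follows by running the de Rham and Moore decompositions globally when $M$ is complete.

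The computational heart, and the step I expect to be hardest, is the optimisation of $g$: it carries the two square roots $\sqrt{(m_1 - 2mt)^2 - 4(m_1-1)}$ and $\sqrt{(2(mt - m_1) - m_2)^2 - 4(m_2-1)}$ coming from $x_{2,1}$ and $x_{2,2}$, together with the cross term $(mt - m_1)^2/(m^2 t(1-t))$ from \eqref{eq:NormHMareGeneral}, and showing their weighted sum never exceeds $((m-4)/m)^2$ is delicate. The structural regrouping in the equality case is designed precisely to bypass the otherwise intractable radius equation \eqref{eq:EquationFirstRadius}, by reading off $t_\ast$ and the radii from the double-circle splitting instead.
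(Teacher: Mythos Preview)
Your substitution step hides a genuine error. Inserting both bounds $|H^{\varphi_i}|^2\le x_{2,i}$ into \eqref{eq:NormHMareGeneral} gives a valid upper bound $g(t)$, but this $g(t)$ is \emph{not} bounded by $((m-4)/m)^2$, because you have discarded the last equation of \eqref{eq:biharmonicitySubmanifoldTorusR}. That constraint ties $|H^{\varphi_1}|^2$ and $|H^{\varphi_2}|^2$ together linearly for each fixed $t$, and for $t\neq t_\ast$ it forbids the simultaneous saturation $|H^{\varphi_i}|^2=x_{2,i}$; so plugging in both maxima overshoots. Concretely, for $m_1=3$, $m_2=4$ one has $t_\ast=9/20$ and $(m-4)^2/m^2=9/49$, while at $t=0.44$ a direct computation gives $m^2 g(0.44)\approx 9.27>9$. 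Hence $t_\ast$ is not the maximiser of $g$, and the whole chain ``$|H|=(m-4)/m\Rightarrow g(t)=((m-4)/m)^2\Rightarrow t=t_\ast$ and both bounds saturated'' collapses.

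The paper repairs exactly this by using the constraint before bounding: from the last equation of \eqref{eq:biharmonicitySubmanifoldTorusR} one rewrites $|H^{\iota\circ\varphi}|^2$ in two ways, each involving only one $|H^{\varphi_i}|^2$, and then substitutes $x_{2,i}$ into the corresponding expression. This yields two single-factor bounds $\tilde g(r_1)$ (increasing) and $\tilde h(r_1)$ (decreasing), and $|H^{\iota\circ\varphi}|^2\le\min\{\tilde g,\tilde h\}$, whose maximum over the admissible interval is attained where $\tilde g=\tilde h$, namely at $t_\ast$ with value $((m-4)/m)^2$. In fact your $g$ is precisely the convex combination $g(t)=t\,\tilde g+(1-t)\,\tilde h$, so it lies between $\tilde g$ and $\tilde h$ and therefore can exceed $\min\{\tilde g,\tilde h\}$. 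For the equality case the paper also avoids solving \eqref{eq:EquationFirstRadius}: once both factors split as $L_i\times\mathbb S^1(b_i)$ it imposes the biharmonicity of the four-fold product directly, reducing to a quadratic in the parameters $\alpha_k$ with at most two distinct roots; the non-minimal configurations are then enumerated, and only one of them gives $|H|=(m-4)/m$, simultaneously producing $t_\ast$ and the radii $b_1=b_2=1/2$, $a_i^2=(m_i-1)/(2(m-2))$ you quote.
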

	
	\begin{proof}
		According to Theorem \ref{th:HForPMCLambdaBiharmonicImmersions} we have
		\begin{equation*}
			\left | H^{\varphi_1} \right |^2 \leq x_{2,1} \quad \text{and} \quad \left | H^{\varphi_2} \right |^2 \leq x_{2,2},
		\end{equation*}
		where in our case
		\begin{align*}
			    x_{2,1} =& \frac {(m_1 - 2)^2 - m_1 \lambda_1 r_1^2 + (m_1 - 2) \sqrt {\left ( m_1 - \lambda_1 r_1^2 \right )^2 - 4(m_1 - 1)}} {2 m_1^2 r_1^2},\\
			\lambda_1   =& 2 r_2^2 \left ( \frac {m_1} {r_1^2} - \frac {m_2} {r_2^2} \right ),\\
			    x_{2,2} =& \frac {(m_2 - 2)^2 - m_2 \lambda_2 r_2^2 + (m_2 - 2) \sqrt {\left ( m_2 - \lambda_2 r_2^2 \right )^2 - 4 (m_2 - 1)}} {2 m_2^2 r_2^2},\\
			\lambda_2   =& 2 r_1^2 \left ( \frac {m_2} {r_2^2} - \frac {m_1} {r_1^2} \right ).
		\end{align*}
		Therefore, from \eqref{eq:NormHMareGeneral}, the squared mean curvature of $M$ in $\mathbb S^n$ is bounded as follows
		\begin{align*}
			\left | H^{\iota \circ \varphi} \right |^2 \leq&  \frac {m_1^2} {m^2} x_{2,1} + \frac {m_2^2} {m^2} x_{2,2} + \frac {r_1^2 r_2^2} {m^2} \left ( \frac {m_2} {r_2^2} - \frac {m_1} {r_1^2} \right )^2.
		\end{align*}
		We look for $M$ such that its mean curvature reaches this bound. Then, locally, from Theorem \ref{th:HForPMCLambdaBiharmonicImmersions}, we have
		\begin{equation*}
			M_1 = L_1 \times \mathbb S^1 (b_1) \quad \text{and} \quad M_2 = L_2 \times \mathbb S^1 (b_2),
		\end{equation*}
		where $L_1$ is minimal in $\mathbb S^{n_1 - 2} (a_1)$, $L_2$ is minimal in $\mathbb S^{n_2 - 2} (a_2)$ and
		\begin{equation}\label{eq:radiiComponents}
			\left \{
			\begin{array}{l}
				\displaystyle a_i^2 = \frac {3m_i - 2 - \lambda_i r_i^2 - \sqrt { \left ( m_i - \lambda_i r_i^2 \right )^2 - 4 (m_i - 1)}} {2 \left (2 m_i - \lambda_i r_i^2 \right ) } r_i^2, \\[13pt]
				\displaystyle b_i^2 = \frac {m_i + 2 - \lambda_i r_i^2 + \sqrt { \left ( m_i - \lambda_i r_i^2 \right ) ^2 - 4 (m_i - 1)}} {2 \left ( 2 m_i - \lambda_i r_i^2 \right )} r_i^2.
			\end{array}
			\right ., \forall i \in \{1, 2\}.
		\end{equation}
		As we mentioned in the last remark we are not going to solve equation \eqref{eq:EquationFirstRadius}, but we use a different method. More precisely, we start with 
		$$
		M = L_1 \times \mathbb S^1 (b_1) \times L_2 \times \mathbb S^1 (b_2) \to \mathbb S^n,
		$$
		where $L_1$ is minimal in $\mathbb S^{n_1 - 2} (a_1)$ and $L_2$ is minimal in $\mathbb S^{n_2 - 2} (a_2)$, and $a_1$, $b_1$, $a_2$, $b_2$ satisfy only the conditions $a_i^2 + b_i^2 = r_i^2$, $i \in \{ 1, 2 \}$ and $r_1^2 + r_2^2 = 1$ (not necessarily \eqref{eq:radiiComponents}). From the biharmonicity condition for $M$ in $\mathbb S^n$ we will find all solutions and then we look for that ones that satisfy \eqref{eq:radiiComponents}.
		
		We can check that $M$ is $PMC$ in $\mathbb S^{n}$ and then we recall that $M$ is biharmonic if and only if
		\begin{equation}\label{eq:BiharmonicityB}
			\trace B \left ( \cdot, A _{H^{\iota \circ \varphi}} (\cdot) \right ) = m H^{\iota \circ \varphi}.
		\end{equation}
		In order to compute $H^{\iota \circ \varphi}$, we first denote by
		$$
		\overline r = \left ( x_1^1, \ldots, x_1^{n_1 - 1}, x_2^1, x_2^2, x_3^1, \dots, x_3^{n_2-1}, x_4^1, x_4^2 \right ) \in \mathbb R^{n_1 - 1} \times \mathbb R^2 \times \mathbb R^{n_2-1} \times \mathbb R^2
		$$ 
		the position vector field in $\mathbb R^{n+2}$. Along 
		$$
		M = M_1 \times M_2 = L_1 \times \mathbb S^1 (b_1) \times L_2 \times \mathbb S^1 (b_2),
		$$
		we set
		\begin{equation*}
			\eta_1 = \frac 1 {a_1} x_1, \quad \eta_2 = \frac 1 {b_1} x_2, \quad \eta_3 = \frac 1 {a_2} x_3, \quad \eta_4 = \frac 1 {b_2} x_4.
		\end{equation*}
		It is easy to check that $\eta_1$, $\eta_2$, $\eta_3$ and $\eta_4$ are unit normal vector fields to $\mathbb S^{n_1-2} (a_1)$, $\mathbb S^1 (b_1)$, $\mathbb S^{n_2-2} (a_2)$ and $\mathbb S^1 (b_2)$ in $\mathbb R^{n_1-1}$, $\mathbb R^2$, $\mathbb R^{n_2-1}$, $\mathbb R^2$, respectively. 
		
		Then, we consider the geodesic frame fields around 
		\begin{itemize}
			\item $x_1$ and tangent to $L_1^{m_1-1}$, denoted by $\left \{ X_{1,i} \right \}_{i \in \overline {1, m_1-1}}$;
			\item $x_2$ and tangent to $\mathbb S^1 (b_1)$, denoted by $\left \{ X_{2, 1} \right \}$;
			\item $x_3$ and tangent to $L_2 ^{m_2-1}$, denoted by $\left \{ X_{3,i} \right \} _{i \in \overline {1, m_2-1}}$;
			\item $x_4$ and tangent to $\mathbb S^1 (b_2)$, denoted by $\left \{ X_{4,1} \right \}$.
		\end{itemize}
		The union of these frame fields is a geodesic frame field around $p = (x_1, x_2, x_3, x_4)$ of $M$.
		At $p$ we have
		\begin{align*}
			B(X_{1,i}, X_{1,i}) =& \nabla^{\mathbb S^{n}} _{X_{1,i}} X_{1,i} - \nabla_{X_{1, i}} X_{1,i}\\
								=& \nabla^{\mathbb R^{n+1}} _{X_{1,i}} X_{1,i} + \left \langle X_{1,i}, X_{1,i} \right \rangle \overline r\\
								=& \nabla^{\mathbb R^{n_1-1}} _{X_{1,i}} X_{1,i} + \overline r\\
								=& \nabla^{\mathbb S^{n_1 - 2}} _{X_{1,i}} X_{1,i} - \frac 1 {a_1} \eta_1 + \overline r\\
								=& \nabla^{\mathbb S^{n_1 - 2}} _{X_{1,i}} X_{1,i} - \nabla^{L_1} _{X_{1,i}} X_{1,i} - \frac 1 {a_1} \eta_1 + \overline r\\
								=& B^1 (X_{1, i}, X_{1,i}) - \frac 1 {a_1} \eta_1 + \overline r,
		\end{align*}
		where $B^1$ is the second fundamental form of $L_1$ in $\mathbb S^{n_1-2} (a_1)$. Thus
		\begin{align*}
			\sum _{i=1} ^{m_1-1} B(X_{1,i}, X_{1,i}) =& m_1 H^1 - \frac {m_1 - 1} {a_1} \eta_1 + (m_1 - 1) \overline r\\
												     =& - \frac {m_1 - 1} {a_1} \eta_1 + (m_1 - 1) \overline r.
		\end{align*}
		In order to compute $B(X_{2,1}, X_{2,1})$ we proceed in the same way and obtain
		\begin{equation*}
			B(X_{2,1}, X_{2,1}) = - \frac 1 {b_1} \eta_2 + \overline r.
		\end{equation*}
		Similarly,
		\begin{align*}
			\sum _{i=1} ^{m_2-1} B(X_{3,i}, X_{3,i}) =& - \frac {m_2 - 1} {a_2} \eta_3 + (m_2 - 1) \overline r,\\
								 B(X_{4,1}, X_{4,1}) =& - \frac 1 {b_2} \eta_4 + \overline r.
		\end{align*}
		Therefore, since
		$$
		\overline r = a_1 \eta_1 + b_1 \eta_2 + a_2 \eta_3 + b_2 \eta_4,
		$$
		 the mean curvature vector field of $M$ in $\mathbb S^n$ is given by
		\begin{align}
			m H^{\iota\circ \varphi} =& \sum _{i=1} ^{m_1-1} B(X_{1,i}, X_{1,i}) + B(X_{2,1}, X_{2,1}) + \sum _{i=1} ^{m_2-1} B(X_{3,i}, X_{3,i}) + B(X_{4,1}, X_{4,1}) \label{eq:normHTheMainTheorem} \\
								  =& \left ( - \frac {m_1 - 1} {a_1} + m a_1 \right ) \eta_1 + \left ( - \frac 1 {b_1} + m b_1 \right ) \eta_2 + \left ( - \frac {m_2 - 1} {a_2} + m a_2 \right ) \eta_3 \notag \\
								   & + \left ( - \frac 1 {b_2} + m b_2 \right ) \eta_4. \notag 
		\end{align}
		Next, we compute the shape operator associated to $H^{\iota \circ \varphi}$. We have
		\begin{align*}
			\nabla^{\mathbb S^{n}} _{X_{1,i}} H^{\iota \circ \varphi} =& \nabla^{\mathbb R^{n+1}} _{X_{1,i}} H^{\iota \circ \varphi} + \left \langle X_{1,i}, H^{\iota \circ \varphi} \right \rangle \overline r \\
								=& \frac 1 {m a_1} \left ( - \frac {m_1 - 1} {a_1} + m a_1 \right ) \nabla ^{\mathbb R^{n_1 - 1}} _{X_{1, i}} x_1 \\
								=& \frac 1 m \left ( - \frac {m_1 - 1} {a_1^2} + m \right ) X_{1, i}. 
		\end{align*}
		Similarly,
		\begin{align*}
			\nabla^{\mathbb S^{n}} _{X_{2,1}} H^{\iota \circ \varphi} =& \frac 1 m \left ( - \frac 1 {b_1^2} + m \right ) X_{2,1},\\
			\nabla^{\mathbb S^{n}} _{X_{3,i}} H^{\iota \circ \varphi} =& \frac 1 m \left ( - \frac {m_2 - 1} {a_2^2} + m \right ) X_{3, i},\\
			\nabla^{\mathbb S^{n}} _{X_{4,1}} H^{\iota \circ \varphi} =& \frac 1 m \left ( - \frac 1 {b_2} + m \right ) X_{4,1}.
		\end{align*}
		Thus, for any $i \in \overline {1, m_1 - 1}$ and $j \in \overline {1, m_2 - 1}$, we have
		\begin{equation*}
			\nabla ^{\perp_{\iota\circ \varphi}} _{X_{1, i}} H^{\iota \circ \varphi} = \nabla ^{\perp_{\iota\circ \varphi}} _{X_{2, 1}} H^{\iota \circ \varphi} = \nabla ^{\perp_{\iota\circ \varphi}} _{X_{3, j}} H^{\iota \circ \varphi} = \nabla ^{\perp_{\iota\circ \varphi}} _{X_{4, 1}} H^{\iota \circ \varphi} = 0
		\end{equation*}
		and
		\begin{align*}
			A_{H^{\iota \circ \varphi}} X_{1, i} =& \frac 1 m \left ( \frac {m_1 - 1} {a_1^2} - m \right ) X_{1, i},\\
			A_{H^{\iota \circ \varphi}} X_{2, 1} =& \frac 1 m \left ( \frac 1 {b_1^2} - m \right ) X_{2,1},\\
			A_{H^{\iota \circ \varphi}} X_{3, j} =& \frac 1 m \left ( \frac {m_2 - 1} {a_2^2} - m \right ) X_{3, i},\\
			A_{H^{\iota \circ \varphi}} X_{4, 1} =& \frac 1 m \left ( \frac 1 {b_2} - m \right ) X_{4,1}.
		\end{align*}
		Now we compute the left-hand side of \eqref{eq:BiharmonicityB}. From the expressions of the shape operator and the second fundamental form computed above, we obtain
		\begin{align*}
			\trace B \left ( \cdot, A_{H^{\iota \circ \varphi}} (\cdot) \right ) =& \sum _{i=1} ^{m_1 - 1} B\left ( X_{1,i}, A_{H^{\iota \circ \varphi}} X_{1, i} \right ) + B \left ( X_{2,1}, A_{H^{\iota \circ \varphi}} X_{2,1} \right )\\
			   															   & + \sum _{i=1} ^{m_2 - 1} B \left ( X_{3, i} , A_{H^{\iota \circ \varphi}} X_{3, i} \right ) + B \left ( X_{4,1}, A_{H^{\iota \circ \varphi}} X_{4,1} \right )\\
			   															  =& \frac {m_1 - 1} m \left ( \frac {m_1 - 1} {a_1^2} - m \right ) \left ( - \frac 1 {a_1} \eta_1 + a_1 \eta_1 + b_1 \eta_2 + a_2 \eta_3 + b_2 \eta_4 \right )\\
			   															   & + \frac 1 m \left ( \frac 1 {b_1^2} - m \right ) \left ( a_1 \eta_1 -\frac 1 {b_1} \eta_2 + b_1 \eta_2 + a_2 \eta_3 + b_2 \eta_4 \right )\\
			   															   & + \frac {m_2 - 1} m \left ( \frac {m_2 - 1} {a_2^2} - m \right ) \left ( a_1 \eta_1 + b_1 \eta_2 - \frac 1 {a_2} \eta_3 + a_2 \eta_3 + b_2 \eta_4 \right )\\
			   															   & + \frac 1 m \left ( \frac 1 {b_2^2} - m \right ) \left ( a_1 \eta_1 + b_1 \eta_2 + a_2 \eta_3 - \frac 1 {b_2} \eta_4 + b_2 \eta_4 \right ).
		\end{align*}
		From equation \eqref{eq:BiharmonicityB}, the expression of $H^{\iota \circ \varphi}$ and the fact that $\eta_1$, $\eta_2$, $\eta_3$ and $\eta_4$ are linearly independent we get that
		\begin{equation*}
			\left \{
			\begin{array}{l}
				\displaystyle \frac {a_1^2} {m_1 - 1} \left ( \frac {(m_1 - 1)^2} {a_1^2} + \frac 1 {b_1^2} + \frac {(m_2 - 1)^2} {a_2^2} + \frac 1 {b_2^2} - 2 m^2 \right ) + 2m - \frac {m_1 - 1} {a_1^2} = 0\\[13pt]
				\displaystyle b_1^2 \left ( \frac {(m_1 - 1)^2} {a_1^2} + \frac 1 {b_1^2} + \frac {(m_2-1)^2} {a_2^2} + \frac 1 {b_2^2} - 2 m^2 \right ) + 2m - \frac 1 {b_1^2} = 0\\[13pt]
				\displaystyle \frac {a_2^2} {m_2-1} \left ( \frac {(m_1 - 1)^2} {a_1^2} + \frac 1 {b_1^2} + \frac {(m_2-1)^2} {a_2^2} + \frac 1 {b_2^2} - 2m^2 \right ) + 2m - \frac {m_2-1} {a_2^2} = 0\\[13pt]
				\displaystyle b_2^2 \left ( \frac {(m_1 - 1)^2} {a_1^2} + \frac 1 {b_1^2} + \frac {(m_2 - 1)^2} {a_2^2} + \frac 1 {b_2^2} - 2 m^2 \right ) + 2m - \frac 1 {b_2^2} = 0
			\end{array}
			\right ..
		\end{equation*}
		We set	 
		$$
		\alpha_1 = \frac {a_1^2} {m_1 - 1}, \quad \alpha_2 = b_1^2, \quad \alpha_3 = \frac {a_2^2} {m_2 - 1}, \quad \alpha_4 = b_2^2
		$$
		and 
		$$
		d = \frac {(m_1 - 1)^2} {a_1^2} + \frac 1 {b_1^2} + \frac {(m_2-1)^2} {a_2^2} + \frac 1 {b_2^2}.
		$$
		With these notations the above system becomes 
		\begin{equation*}
			\left \{
			\begin{array}{l}
				\displaystyle \alpha_1 \left ( d - 2m^2 \right ) + 2m - \frac 1 {\alpha_1} = 0\\[10pt]
				\displaystyle \alpha_2 \left ( d - 2m^2 \right ) + 2m - \frac 1 {\alpha_2} = 0\\[10pt]
				\displaystyle \alpha_3 \left ( d - 2m^2 \right ) + 2m - \frac 1 {\alpha_3} = 0\\[10pt]
				\displaystyle \alpha_4 \left ( d - 2m^2 \right ) + 2m - \frac 1 {\alpha_4} = 0
			\end{array}
			\right .
		\end{equation*}
		and thus the biharmonicity condition is equivalent to this algebraic system.
		
		In this way, $\alpha_k$, $k \in \overline {1, 4}$, are the positive solutions of the following second degree equation
		\begin{equation} \label{eq:SecondDegreeEquationMainTheorem}
			\left ( 2m^2 - d \right ) z^2 - 2mz + 1 = 0.
		\end{equation}
		Therefore, $\alpha_k$'s have at most two distinct values.
		
		Since equation \eqref{eq:SecondDegreeEquationMainTheorem} has at most two distinct real solutions, we distinguish two cases:
		
		i) If $\alpha_k = \alpha$, for any $k \in \overline {1, 4}$, then since $a_1^2 + b_1^2 + a_2^2 + b_2^2 = 1$, we get that $\alpha = 1 / m$ which is one of the two solutions of \eqref{eq:SecondDegreeEquationMainTheorem}.
		
		ii) If we have two distinct $\alpha_k$'s, then the discriminant of equation \eqref{eq:SecondDegreeEquationMainTheorem} must be positive, that is $\Delta = 4 \left ( d - m^2 \right ) > 0$. Thus $d > m^2$ and the solutions are
		\begin{equation}\label{eq:SolutionsSecondDegreeEcuationEnd}
			z_1 = \frac {m - \sqrt {d - m^2}} {2m^2 - d} \quad \text{and} \quad z_2 = \frac {m + \sqrt {d - m^2}} {2m^2 - d}. 
		\end{equation}
		We have 
		\begin{align*}
			1 =& a_1^2 + b_1^2 + a_2^2 + b_2^2\\
			  =& (m_1 - 1) \alpha_1 + \alpha_2 + (m_2 - 1) \alpha_3 + \alpha_4\\
			  =& N_1 z_1 + N_2 z_2,
		\end{align*}
		where $N_1, N_2 \in \mathbb N \backslash \{ 0 \}$ and $N_1 + N_2 = m$.
		Substituting $z_1$ and $z_2$ in the last relation we obtain 
		\begin{equation*}
			(N_2 - N_1) \sqrt {d - m^2} = m^2 - d.
		\end{equation*}
		From here we notice that $N_1 > N_2$. 
		
		Denoting $y = \sqrt {d - m^2}$, $y>0$, the last relation becomes 
		$$
		(N_2 - N_1) y + y^2 = 0,
		$$
		which implies that either $y = 0$, that is impossible, or $y = N_1 - N_2$, i.e. $d = (N_1 - N_2)^2 + m^2$. Replacing $d$ in \eqref{eq:SolutionsSecondDegreeEcuationEnd} we obtain
		\begin{equation}\label{eq:relationsBetweenRatiosBiharmonicity}
			z_1 = \frac 1 {2 N_1} \quad \text{and} \quad z_2 = \frac 1 {2 N_2}.
		\end{equation}
		
		From \eqref{eq:relationsBetweenRatiosBiharmonicity} and case i) we obtain that there are only the following $8$ possible cases:
		\begin{enumerate}[label = \arabic*), ref=\arabic*)]
			\item $\alpha_1 = \alpha_2 = \alpha_3 = \alpha_4 = 1 / m$, \label{case:0}
			\item $\alpha_1 = \alpha_2 = z_1$ and $\alpha_3 = \alpha_4 = z_2$, then $N_1 = (m_1 - 1) + 1 = m_1$, $N_2 = m_2$. Thus $\alpha_1 = \alpha_2 = 1 / (2 N_1) = 1 / (2m_1)$ and $\alpha_3 = \alpha_4 = 1 / (2m_2)$, \label{case:1}
			\item $\alpha_1 = \alpha_3 = 1 / (2(m-2))$ and $\alpha_2 = \alpha_4 = 1 / 4$, \label{case:2}
			\item $\alpha_1 = \alpha_4 = 1 / (2m_1)$ and $\alpha_2 = \alpha_3 = 1 / (2m_2)$, \label{case:3}
			\item $\alpha_1 = \alpha_2 = \alpha_3 = 1 / (2(m-1))$ and $\alpha_4 = 1 / 2$, \label{case:4}
			\item $\alpha_1 = \alpha_2 = \alpha_4 = 1 / (2(m_1+1))$ and $\alpha_3 = 1 / (2(m_2-1))$, \label{case:5}
			\item $\alpha_1 = \alpha_3 = \alpha_4 = 1 / (2(m-1))$ and $\alpha_2 = 1 / 2$, \label{case:6}
			\item $\alpha_2 = \alpha_3 = \alpha_4 = 1 / (2(m_2+1))$ and $\alpha_1 = 1 / (2(m_1 - 1)$. \label{case:7}
		\end{enumerate}
		Using the fact that $L_1$ is minimal in $\mathbb S^{n_1 - 2} (a_1)$ and taking into account the second fundamental form of $\mathbb S^{n_1 - 2} (a_1) \times \mathbb S^1 (b_1)$ in $\mathbb S^{n_1}$ we obtain that
		\begin{equation*}
			\left |H^{\varphi_1} \right |^2 = \frac {a_1^2 b_1^2} {m_1^2 r_1^2} \left ( \frac 1 {b_1^2} - \frac {m_1 - 1} {a_1^2} \right )^2 = \frac {a_1^2 b_1^2} {m_1^2 r_1^2} \left ( \frac 1 {\alpha_2} - \frac 1 {\alpha_1} \right )^2.
		\end{equation*}
		Similarly,
		\begin{equation*}
			\left |H^{\varphi_2} \right |^2 = \frac {a_2^2 b_2^2} {m_2^2 r_2^2} \left ( \frac 1 {b_2^2} - \frac {m_2 - 1} {a_2^2} \right )^2 = \frac {a_2^2 b_2^2} {m_2^2 r_2^2} \left ( \frac 1 {\alpha_4} - \frac 1 {\alpha_3} \right )^2.
		\end{equation*}
		Note that if $\alpha_1 = \alpha_2$ or $\alpha_3 = \alpha_4$, then $M_1$ is minimal in $\mathbb S^{n_1} (r_1)$ or $M_2$ is minimal in $\mathbb S^{n_2} (r_2)$, respectively. Since $M_1$ and $M_2$ are not minimal, the above list narrows down to \ref{case:2} and \ref{case:3}.
		
		Now we study separately the remaining cases. From \ref{case:2} we obtain the radii
		\begin{align*}
			a_1^2 = \frac {m_1 - 1} {2(m - 2)}, \quad a_2^2 =& \frac {m_2 - 1} {2(m - 2)}, \quad b_1^2 = b_2^2 = \frac 1 4,\\
			r_1^2 = \frac {3 m_1 + m_2 - 4} {4(m - 2)},& \quad r_2^2 = \frac {m_1 + 3m_2 - 4} {4(m - 2)}.
		\end{align*}
		Note that $r_1$ and $r_2$ satisfy the relations
		\begin{align*}
			&\frac {m_1 + 2\sqrt{m_1 - 1}} {2m} \leq r_1^2 \leq \frac {2m_1 + m_2 - 2 \sqrt {m_2 - 1}} {2m},\\
			&\frac {m_2 + 2\sqrt{m_2 - 1}} {2m} \leq r_2^2 \leq \frac {m_1 + 2m_2 - 2 \sqrt {m_1 - 1}} {2m}.
		\end{align*}
		Thus 
		$$
		\left |H^{\varphi_1} \right |^2 = \frac {2(m_1 - 1) (m - 4)^2} {m_1^2 (3m_1 + m_2 - 4)} \quad \text{and} \quad \left |H^{\varphi_2} \right |^2 = \frac {2(m_2 - 1) (m - 4)^2} {m_2^2 (m_1 + 3m_2 - 4)}.
		$$
		Note that, since $m^2 - 4m_1 - 4m + 8$ and $m^2 - 4m_2 - 4m + 8$ which appear in the expressions of $a_i$ and $x_{2, i}$ are positive for any $m_1 > 2$ and $m_2 > 2$, the radii $a_i$ and $r_i$, $i \in \{1, 2\}$, satisfy equation \eqref{eq:radiiComponents} and 
		$$
		\left |H^{\varphi_1} \right |^2 = x_{2,1} \quad \text{and} \quad \left |H^{\varphi_2} \right |^2 = x_{2,2}.
		$$
		Using equation \eqref{eq:NormHMareGeneral} or \eqref{eq:normHTheMainTheorem} we obtain that 
		\begin{equation*}
			|H^{\iota \circ \varphi}|^2 = \frac {(m-4)^2} {m^2}.
		\end{equation*}
		The submanifold $M$ is $PMC$ and has $A_{H^{\iota \circ \varphi}}$ parallel. Moreover, taking into account the expressions of the shape operators $A^{\varphi_1}_{H^{\varphi_1}}$ and $A^{\varphi_2}_{H^{\varphi_2}}$ or from \eqref{eq:ThetaFormula} and \eqref{eq:OmegaFormula}, we obtain that the distinct eigenvalues of $A^{\varphi_1} _{H^{\varphi_1}}$ are 
		\begin{equation*}
			\theta_1 = \frac {2(m - 4) (m - 2)} {m_1 (3m_1 + m_2 - 4)}, \quad \omega_1 = - \frac {4 (m_1 - 1) (m - 4)} {m_1 (3m_1 + m_2 - 4)},
		\end{equation*}
		with the multiplicities $m_1 - 1$ and $1$, respectively, and the distinct eigenvalues of $A^{\varphi_2} _{H^{\varphi_2}}$ are 
		\begin{equation*}
			\theta_2 = \frac {2(m - 4) (m - 2)} {m_2 (m_1 + 3m_2 - 4)}, \quad \omega_2 = - \frac {4 (m_2 - 1) (m - 4)} {m_2 (m_1 + 3m_2 - 4)},
		\end{equation*}
		with the multiplicities $m_2 - 1$ and $1$, respectively. Therefore, the distinct eigenvalues of $A_{H^{\iota\circ \varphi}}$ are 
		\begin{equation*}
			\mu_1 = \frac {m - 4} m \quad \text{and} \quad \mu_2 = - \frac {m - 4} m,
		\end{equation*}
		with the multiplicities $m - 2$ and $2$, respectively.
		
		We have 
		\begin{equation*}
			L_1^{m_1 - 1} \times L_2^{m_2 - 1} \longrightarrow \mathbb S^{n_1 - 2} (a_1) \times \mathbb S^{n_2 - 2} (a_2) \longrightarrow \mathbb S^{n - 4} \left ( \frac 1 {\sqrt 2} \right ).
		\end{equation*}
		Taking into account the fact that $L_1$ is minimal in $\mathbb S^{n_1 - 2} (a_1)$, $L_2$ is minimal in $\mathbb S^{n_2 - 2} (a_2)$ and the second fundamental form of $\mathbb S^{n_1 - 2} (a_1) \times \mathbb S^{n_2 - 2} (a_2)$ in $\mathbb S^{n - 4} (1 / \sqrt 2)$, we obtain that $L_1 \times L_2$ is minimal in $\mathbb S^{n - 4} (1 / \sqrt 2)$.
		
		This agrees with Proposition \ref{th:Theorem170}.
		
		From \ref{case:3} we obtain the radii
		\begin{align*}
			a_1^2 = \frac {m_1 - 1} {2m_1},  \quad b_1^2 = \frac 1 {2m_2}&, \quad a_2^2 = \frac {m_2 - 1} {2m_2}, \quad b_2^2 = \frac 1 {2m_1},\\
			r_1^2 = \frac {m_1 m_2 - m_2 + m_1} {2m_1m_2},& \quad r_2^2 = \frac {m_1 m_2 - m_1 + m_2} {2m_1m_2}.
		\end{align*}
		Note that $r_1$ and $r_2$ satisfy the relations
		\begin{align*}
			&\frac {m_1 + 2\sqrt{m_1 - 1}} {2m} \leq r_1^2 \leq \frac {2m_1 + m_2 - 2 \sqrt {m_2 - 1}} {2m}, \\
			&\frac {m_2 + 2\sqrt{m_2 - 1}} {2m} \leq r_2^2 \leq \frac {m_1 + 2m_2 - 2 \sqrt {m_1 - 1}} {2m}. 
		\end{align*}
		Thus
		\begin{equation*}
			\left | H^{\varphi_1} \right |^2 = \frac {2 (m_1 - 1) (m_1 - m_2)^2} {m_1^2 (m_1 m_2 - m_2 + m_1)} \quad \text{and} \quad \left |H^{\varphi_2} \right |^2 = \frac {2 (m_2 - 1) (m_1 - m_2)^2} {m_2^2 (m_1 m_2 - m_1 + m_2)}.
		\end{equation*}
		
		As in the previous case, we obtain
		\begin{equation*}
			|H^{\iota \circ \varphi}|^2 = \frac {(m_1 - m_2)^2} {m^2}.
		\end{equation*}
		and, again in accord with Proposition \ref{th:Theorem170}, we have 
		\begin{equation*}
			L_1^{m_1 - 1} \times \mathbb S^1 (b_2) \longrightarrow \mathbb S^{n_1 - 2} (a_1) \times \mathbb S^{1} (b_2) \longrightarrow \mathbb S^{n_1} \left ( \frac 1 {\sqrt 2} \right )
		\end{equation*}
		and
		\begin{equation*}
			L_2^{m_2 - 1} \times \mathbb S^1 (b_1) \longrightarrow \mathbb S^{n_2 - 2} (a_2) \times \mathbb S^{1} (b_1) \longrightarrow \mathbb S^{n_2} \left ( \frac 1 {\sqrt 2} \right ).
		\end{equation*}
		Taking into account the fact that $L_1$ is minimal in $\mathbb S^{n_1 - 2} (a_1)$, $L_2$ is minimal in $\mathbb S^{n_2 - 2} (a_2)$ and the second fundamental forms of $\mathbb S^{n_1 - 2} (a_1) \times \mathbb S^{1} (b_2)$ in $\mathbb S^{n_1} (1 / \sqrt 2)$ and $\mathbb S^{n_2 - 2} (a_2) \times \mathbb S^{1} (b_1)$ in $\mathbb S^{n_2} (1 / \sqrt 2)$ respectively, we obtain that $L_1 \times \mathbb S^1 (b_2)$ is minimal in $\mathbb S^{n_1} (1 / \sqrt 2)$ and $L_2 \times \mathbb S^1 (b_1)$ is minimal in $\mathbb S^{n_2} (1 / \sqrt 2)$.
		
		Note that, in case \ref{case:3}, since $m_1^2 + m_2^2 - m_1 m_2^2$ and $m_1^2 + m_2^2 - m_1^2 m_2$ which appear in the expressions of $a_i$, $x_{1, i}$ and $x_{2, i}$ cannot be simultaneously positive, the radii $a_i$ and $r_i$, $i \in \{1, 2\}$, do not satisfy simultaneously equation \eqref{eq:radiiComponents}. Moreover, we have
		\begin{itemize}
			\item if $m_1^2 + m_2^2 - m_1 m_2^2 > 0$ and $m_1^2 + m_2^2 - m_1^2 m_2 < 0$, then
			\begin{equation*}
				\left | H^{\varphi_1} \right |^2 = x_{2,1} \quad \text{and} \quad \left | H^{\varphi_2} \right |^2 = x_{1,2};
			\end{equation*}
			\item if $m_1^2 + m_2^2 - m_1 m_2^2 < 0$ and $m_1^2 + m_2^2 - m_1^2 m_2 > 0$, then
			\begin{equation*}
				\left | H^{\varphi_1} \right |^2 = x_{1,1} \quad \text{and} \quad \left | H^{\varphi_2} \right |^2 = x_{2,2};
			\end{equation*}
			\item if $m_1^2 + m_2^2 - m_1 m_2^2 < 0$ and $m_1^2 + m_2^2 - m_1^2 m_2 < 0$, then
			\begin{equation*}
				\left | H^{\varphi_1} \right |^2 = x_{1,1} \quad \text{and} \quad \left | H^{\varphi_2} \right |^2 = x_{1,2}.
			\end{equation*}
		\end{itemize}
		
		Now we show that any proper biharmonic submanifold $M = M_1 \times M_2$ has 
		$$
		\left | H^{\iota \circ \varphi} \right |^2 \leq \frac {(m-4)^2} {m^2}.
		$$ 
		Using \eqref{eq:biharmonicitySubmanifoldTorusR} and \eqref{eq:NormHMareGeneral} we can express $\left | H^{\iota \circ \varphi} \right |^2$ in two ways 
		\begin{align*}
			\left | H^{\iota\circ \varphi} \right |^2 = \frac {m_1^2} {m^2} \frac 1 {r_1^2} \left | H^{\varphi_1} \right |^2 + \frac 1 {m^2} \left ( m - \frac {m_1} {r_1^2} \right )^2,\\
			\left | H^{\iota\circ \varphi} \right |^2 = \frac {m_2^2} {m^2} \frac 1 {r_2^2} \left | H^{\varphi_2} \right |^2 + \frac 1 {m^2} \left ( m - \frac {m_2} {r_2^2} \right )^2.
		\end{align*}
		From $\left | H^{\varphi_1} \right |^2 \leq x_{2,1}$, $\left | H^{\varphi_2} \right |^2 \leq x_{2,2}$ and the expressions of $x_{2,1}$ and $x_{2,2}$, we find two upper bounds for the squared mean curvature 
		\begin{align*}
			\left | H^{\iota\circ \varphi} \right |^2 \leq& \frac {(m_1 - 2)^2 - 2 m_1^2} {2m^2} \frac 1 {r_1^4} + \frac {m_1} m \frac 1 {r_1^2} + \frac {m_1 - 2} {2m^2} \frac {\sqrt {\left (2mr_1^2 - m_1 \right )^2 - 4(m_1 - 1)}} {r_1^4} \\
													   & + \frac 1 {m^2} \left ( m - \frac {m_1} {r_1^2} \right )^2,\\
			\left | H^{\iota\circ \varphi} \right |^2 \leq& \frac {(m_2 - 2)^2 - 2 m_2^2} {2m^2} \frac 1 {r_2^4} + \frac {m_2} m \frac 1 {r_2^2} + \frac {m_2 - 2} {2m^2} \frac {\sqrt {\left (2mr_2^2 - m_2 \right )^2 - 4(m_2 - 1)}} {r_2^4}\\
													   & + \frac 1 {m^2} \left ( m - \frac {m_2} {r_2^2} \right )^2.
		\end{align*}
		Let $g : I \to \mathbb R$ and $h: J \to \mathbb R$ given by 
		\begin{align*}
			g(x) =& \frac {(m_1 - 2)^2 - 2 m_1^2} {2m^2} \frac 1 {x^2} + \frac {m_1} m \frac 1 {x} + \frac {m_1 - 2} {2m^2} \frac {\sqrt {\left (2mx - m_1 \right )^2 - 4(m_1 - 1)}} {x^2}\\
				  & + \frac 1 {m^2} \left ( m - \frac {m_1} {x} \right )^2,\\
			h(x) =& \frac {(m_2 - 2)^2 - 2 m_2^2} {2m^2} \frac 1 {x^2} + \frac {m_2} m \frac 1 {x} + \frac {m_2 - 2} {2m^2} \frac {\sqrt {\left (2mx - m_2 \right )^2 - 4(m_2 - 1)}} {x^2} \\
				  & + \frac 1 {m^2} \left ( m - \frac {m_2} {x} \right )^2,
		\end{align*}
		where $I$ and $J$ are defined by \eqref{eq:restrictionsRadiusR1} and \eqref{eq:restrictionsRadiusR2}, respectively.
		
		Studying the derivatives of these functions, we obtain that $g$ and $h$ are increasing functions. Note that 
		\begin{align*}
			\left | H^{\iota\circ \varphi} \right |^2 \leq \tilde g(r_1) = g \left ( r_1^2 \right ) \quad \text{and} \quad \left | H^{\iota\circ \varphi} \right |^2 \leq \tilde h(r_1) = h \left ( 1 - r_1^2 \right ).
		\end{align*}
		From here we find a better upper bound, that is, for any $r_1$ we have
		\begin{equation*} 
			\left | H^{\iota\circ \varphi} \right |^2 \leq \min \left \{ \tilde g (r_1), \tilde h (r_1) \right \}.
		\end{equation*} 
		It is not difficult to see that $\tilde g$ is an increasing function, $\tilde h$ is a decreasing function and 
		$$
		\tilde g \left ( \frac {3m_1 + m_2 - 4} {4(m - 2)} \right ) = \tilde h \left ( \frac {3m_1 + m_2 - 4} {4(m - 2)} \right ) = \frac {(m - 4)^2} {m^2}.
		$$ 
		Therefore, we obtain that $\min \left \{ \tilde g (r_1), \tilde h (r_1) \right \} \leq (m - 4)^2 / m^2$ for any $r_1$, thus $\left | H^{\iota \circ \varphi} \right |^2 \leq (m - 4)^2 / m^2$.
		
		We note that there could be other submanifolds $M = M_1 \times M_2$ such that their squared mean curvature reaches the upper bound $\min \left \{ \tilde g, \tilde h \right \}$. For example, the submanifolds described in case \ref{case:3} have 
		$$
		\left | H^{\iota \circ \varphi} \right |^2 = \frac {(m_1 - m_2)^2} {m^2} = \min \left \{ \tilde g, \tilde h \right \},
		$$ 
		when $\left ( m_1^2 + m_2^2 - m_1 m_2^2 \right ) \left (m_1^2 + m_2^2 - m_1^2 m_2 \right ) < 0$.
	\end{proof}

	\begin{corollary}
		Let $\varphi_1 : {M_1^{m_1}} \to {\mathbb S ^{m_1 + 1}(r_1)}$ and $\varphi_2 : {M_2^{m_2}} \to {\mathbb S^{m_2 + 1}(r_2)}$ be two non-pseudo-umbilical hypersurfaces, $m_1 > 2$, $m_2 > 2$, such that $M^m = M_1^{m_1} \times M_2^{m_2}$ is proper biharmonic in $\mathbb S^{m + 3}$, $r_1^2 + r_2^2 = 1$. 
		Then $\left | H^{\iota \circ \varphi} \right |^2 \in (0, (m - 4)^2 / m^2 ]$ and $\left | H^{\iota \circ \varphi} \right |^2 = (m - 4)^2 / m^2$ if and only if $r_1^2 = (3m_1 + m_2 - 4) / (4(m-2))$ and, up to isometries of $\mathbb S^{m+3}$, $\varphi (M)$ is an open subset of the extrinsic product
		$$
		\mathbb S^{m_1 - 1} \left ( \sqrt {\frac {m_1 - 1}  {2(m-2)}} \right ) \times \mathbb S^{m_2 - 1} \left ( \sqrt {\frac {m_2 - 1}  {2(m-2)}} \right ) \times \mathbb S^1 \left (\frac 1 2 \right ) \times \mathbb S^1 \left (\frac 1 2 \right ).
		$$
	\end{corollary}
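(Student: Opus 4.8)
The plan is to combine the sharp range information for the two factors (Theorem~\ref{th:HForPMCLambdaBiharmonicImmersions}) with the product formula \eqref{eq:NormHMareGeneral}, and then to pin down the extremal configuration by a direct computation rather than by solving the transcendental equation \eqref{eq:EquationFirstRadius}. First I would record that, since each $\varphi_i$ is $PMC$, proper $\lambda_i$-biharmonic and non-pseudo-umbilical with $m_i>2$, Theorem~\ref{th:HForPMCLambdaBiharmonicImmersions} gives $|H^{\varphi_i}|^2\le x_{2,i}$, where $\lambda_i=2r_{3-i}^2(m_i/r_i^2-m_{3-i}/r_{3-i}^2)$ and $x_{2,i}$ is the explicit larger root of the associated quadratic. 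Substituting these two inequalities into \eqref{eq:NormHMareGeneral} produces an upper bound for $|H^{\iota\circ\varphi}|^2$ expressed in terms of $r_1$ alone, using $r_2^2=1-r_1^2$.

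To identify the extremal case I would first invoke Theorem~\ref{th:HForPMCLambdaBiharmonicImmersions}(1): the equalities $|H^{\varphi_i}|^2=x_{2,i}$ force each factor to split locally as $M_i=L_i\times\mathbb S^1(b_i)$ with $L_i$ minimal in $\mathbb S^{n_i-2}(a_i)$. Rather than imposing the radii formulas \eqref{eq:radiiComponents} directly, I would consider the four-factor product $M=L_1\times\mathbb S^1(b_1)\times L_2\times\mathbb S^1(b_2)$ subject only to $a_i^2+b_i^2=r_i^2$ and $r_1^2+r_2^2=1$, and then impose biharmonicity of $\iota\circ\varphi$ in $\mathbb S^n$. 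Choosing a geodesic frame adapted to the four factors, I would compute $H^{\iota\circ\varphi}$ from the second fundamental forms (using minimality of $L_1,L_2$), and then compute the shape operator $A_{H^{\iota\circ\varphi}}$, which turns out to be diagonal with four constant eigenvalues. Substituting into the $PMC$ biharmonicity equation \eqref{eq:BiharmonicityB} decouples into four copies of a single quadratic in the ratios $\alpha_1=a_1^2/(m_1-1)$, $\alpha_2=b_1^2$, $\alpha_3=a_2^2/(m_2-1)$, $\alpha_4=b_2^2$, so each $\alpha_k$ takes one of at most two values.

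The next step is combinatorial. Using $\sum_k(\text{mult})\,\alpha_k=1$ together with $N_1+N_2=m$, I would show the two admissible roots are $z_1=1/(2N_1)$ and $z_2=1/(2N_2)$, leaving eight configurations; the hypotheses that neither $M_1$ nor $M_2$ is minimal (equivalently $\alpha_1\ne\alpha_2$ and $\alpha_3\ne\alpha_4$) eliminate all but two, namely $\{\alpha_1=\alpha_3=1/(2(m-2)),\ \alpha_2=\alpha_4=1/4\}$ and $\{\alpha_1=\alpha_4=1/(2m_1),\ \alpha_2=\alpha_3=1/(2m_2)\}$. For the first I would read off $r_1^2=(3m_1+m_2-4)/(4(m-2))$, verify that the radii satisfy \eqref{eq:radiiComponents}, compute $|H^{\iota\circ\varphi}|^2=(m-4)^2/m^2$, and check that $L_1\times L_2$ is minimal in $\mathbb S^{n-4}(1/\sqrt2)$, yielding the asserted splitting $\tilde M_1\times\mathbb S^1(1/2)\times\mathbb S^1(1/2)$ with $A_{H^{\iota\circ\varphi}}$ having eigenvalues $\pm(m-4)/m$ of multiplicities $m-2$ and $2$. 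The second configuration produces instead $|H^{\iota\circ\varphi}|^2=(m_1-m_2)^2/m^2$, in accordance with Proposition~\ref{th:Theorem170}. This direct verification also supplies the converse implication.

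Finally, to establish that the bound is global, i.e. $|H^{\iota\circ\varphi}|^2\le(m-4)^2/m^2$ for \emph{every} admissible $M$, I would introduce the two one-variable bounds $\tilde g(r_1)=g(r_1^2)$ and $\tilde h(r_1)=h(1-r_1^2)$ arising from $|H^{\varphi_1}|^2\le x_{2,1}$ and $|H^{\varphi_2}|^2\le x_{2,2}$ respectively, show by differentiation that $\tilde g$ is increasing and $\tilde h$ is decreasing, and check that they coincide at $r_1^2=(3m_1+m_2-4)/(4(m-2))$ with common value $(m-4)^2/m^2$; then $|H^{\iota\circ\varphi}|^2\le\min\{\tilde g,\tilde h\}\le(m-4)^2/m^2$. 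The main obstacle I expect is exactly this monotonicity analysis: since $g$ and $h$ carry square-root terms, computing their derivatives, confirming their signs throughout the intervals \eqref{eq:restrictionsRadiusR1}--\eqref{eq:restrictionsRadiusR2}, and verifying that the two curves meet precisely at the extremal radius is the delicate computational heart of the argument. The complete (global) version then follows from the global form of the de~Rham decomposition already used in Theorem~\ref{th:HForPMCLambdaBiharmonicImmersions}.
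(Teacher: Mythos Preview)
Your proposal is correct in spirit, but you are re-proving Theorem~\ref{th:MainTheorem} rather than deducing the Corollary from it. In the paper the Corollary carries no separate proof: it is an immediate specialization of Theorem~\ref{th:MainTheorem} to the codimension-one case $n_i=m_i+1$. Two short observations suffice. First, a hypersurface is automatically $PMC$, so the hypotheses of Theorem~\ref{th:MainTheorem} are met and one obtains $|H^{\iota\circ\varphi}|\in(0,(m-4)/m]$ together with the local splitting $\tilde M_1\times\mathbb S^1(1/2)\times\mathbb S^1(1/2)$ at the upper bound. Second, in the hypersurface case the factor $L_i^{m_i-1}$ is a minimal submanifold of $\mathbb S^{n_i-2}(a_i)=\mathbb S^{m_i-1}(a_i)$ of the \emph{same} dimension, hence $L_i$ is an open subset of $\mathbb S^{m_i-1}(a_i)$ itself; reading off $a_i^2=(m_i-1)/(2(m-2))$ from \eqref{eq:radiiComponents} yields the stated product of four round spheres.

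By contrast, your outline redoes the entire machinery of Theorem~\ref{th:MainTheorem}: the four-factor frame computation, the quadratic in the $\alpha_k$, the combinatorial case split, and the monotonicity of $\tilde g$ and $\tilde h$. None of this is wrong, and it mirrors the paper's proof of the theorem, but for the Corollary it is redundant. The one genuinely new step you omit is the dimensional collapse $L_i=\mathbb S^{m_i-1}(a_i)$, which is precisely what turns the abstract $\tilde M_1$ of the theorem into the explicit product of spheres in the Corollary.
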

	
	Now, we continue our study with the case of surfaces. First we recall that the non-minimal and $PMC$ surfaces in $\mathbb S^n (r)$ were already classified in \cite{Chen1972}, \cite{Yau1974} and they are: minimal surfaces in a small hypersphere of $\mathbb S^n (r)$ or $CMC$ surfaces in $\mathbb S^3 (r') \subset \mathbb S^n (r)$, $0 < r' \leq r$.
	
	We obtain the following rigidity result
	
	\begin{theorem}\label{th:TheoremTwoSurfaces}
		Let $\varphi_1 : M_1^2 \to \mathbb S^{n_1} (r_1)$ and $\varphi_2 : M_2^2 \to \mathbb S^{n_2} (r_2)$ be two non-minimal and $PMC$ surfaces such that $M^4 = M_1^2 \times M_2^2$ is proper-biharmonic in $\mathbb S^n$, $r_1^2 + r_2^2 = 1$ and $n_1 + n_2 = n-1$. Then $M_1$ and $M_2$ are pseudo-umbilical in $\mathbb S^{n_1} (r_1)$ and $\mathbb S^{n_2} (r_2)$, respectively and $\left | H^{\iota \circ \varphi} \right | = 1$.
	\end{theorem}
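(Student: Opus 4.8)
The plan is to run the surface case in parallel with the machinery of Section 3, replacing the use of Okumura's Lemma (which is only effective for $m>2$) by the elementary fact that a traceless symmetric operator on a \emph{surface} has eigenvalues $\pm\sigma$, and hence vanishing third trace. First I would record the structural data already at hand. Since $M=M_1\times M_2$ is proper biharmonic, Corollary \ref{th:CorolComponentsAreCMC} gives that $M_1$ and $M_2$ are $CMC$, and reading off \eqref{eq:biharmonicitySubmanifoldTorusR} each $\varphi_i$ is proper $\lambda_i$-biharmonic with $\lambda_1=2r_2^2\left(m_1/r_1^2-m_2/r_2^2\right)$ and $\lambda_2=-2r_1^2\left(m_1/r_1^2-m_2/r_2^2\right)$; in particular $\lambda_1$ and $\lambda_2$ are never both negative (they have opposite signs, or both vanish precisely when $r_1^2=r_2^2=1/2$). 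Combined with the hypothesis that each $\varphi_i$ is $PMC$ and non-minimal, each $M_i$ is thus a non-minimal $CMC$ $PMC$ proper $\lambda_i$-biharmonic surface, so \eqref{eq:PMC2LambdaBiharmonicCHaracterization} holds and, in particular, $A_{H^{\varphi_i}}\perp A_{\eta_a}$ for every normal direction $\eta_a\perp H^{\varphi_i}$.

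Next I would apply Lemma \ref{th:LemmaRelationFromWeitzenbock} to each $\varphi_i:M_i\to\mathbb S^{n_i}(r_i)$; its proof relies only on the Gauss, Codazzi and Ricci equations together with the Weitzenb\"ock formula, so it remains valid for $m_i=2$. Writing $\Phi_i=A_{H^{\varphi_i}}-|H^{\varphi_i}|^2\id$, the surface constraint forces the eigenvalues of $\Phi_i$ to be $\pm\sigma_i$, whence $\trace\Phi_i^3=0$; moreover $CMC$ makes $|H^{\varphi_i}|^2$, and therefore $|\Phi_i|^2=\left(2/r_i^2-\lambda_i-2|H^{\varphi_i}|^2\right)|H^{\varphi_i}|^2$, constant, so $\Delta|\Phi_i|^2=0$. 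Substituting these three facts into Lemma \ref{th:LemmaRelationFromWeitzenbock} and simplifying the coefficient (using \eqref{eq:PMC2LambdaBiharmonicCHaracterization} to rewrite $|\Phi_i|^2/|H^{\varphi_i}|^2$) collapses the identity to
\begin{equation*}
	\bigl(4|H^{\varphi_i}|^2+\lambda_i\bigr)|\Phi_i|^2+|\nabla\Phi_i|^2=0,\qquad i\in\{1,2\}.
\end{equation*}
This is the crux: since $|\nabla\Phi_i|^2\geq 0$ and $|H^{\varphi_i}|^2>0$, if $M_i$ fails to be pseudo-umbilical then $|\Phi_i|^2$ is a positive constant and the identity forces $\lambda_i\leq-4|H^{\varphi_i}|^2<0$.

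Finally I would combine this sign information. If both $M_1$ and $M_2$ were non-pseudo-umbilical, the previous step would give $\lambda_1<0$ and $\lambda_2<0$ simultaneously, contradicting the opposite-sign structure noted above (and in the degenerate case $r_1^2=r_2^2=1/2$ the identity with $\lambda_i=0$ already forces $|\Phi_i|^2=0$). Hence at least one factor, say $M_1$, is pseudo-umbilical; being $PMC$, this means $|H^{\varphi_1}|^2=1/r_1^2-\lambda_1/m_1$ by Theorem \ref{th:CMCLambdaBiharmonicImmersion}. Proposition \ref{th:H1MaxEquivH2Max} then upgrades this to $|H^{\varphi_2}|^2=1/r_2^2-\lambda_2/m_2$, so $M_2$ is pseudo-umbilical as well, and the same proposition yields $|H^{\iota\circ\varphi}|=1$, completing the proof. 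I expect the only genuinely delicate point to be verifying that the coefficient in Lemma \ref{th:LemmaRelationFromWeitzenbock} simplifies \emph{exactly} to $4|H^{\varphi_i}|^2+\lambda_i$; once the third-trace term drops out, the rest is sign bookkeeping.
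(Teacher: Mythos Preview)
Your argument is correct and takes a genuinely different route from the paper. The paper's proof invokes the Chen--Yau classification of non-minimal $PMC$ surfaces in spheres (either minimal in a small hypersphere, or $CMC$ in a totally geodesic $\mathbb S^3(r')$), combines it with Theorem \ref{th:lambdaBiharmonicSubmanifoldsThatLieInHyperspheres} to see that a non-pseudo-umbilical factor must be a product of two circles, and then appeals to Proposition \ref{th:ExampleProductSphereCircle} (case $m=2$) to conclude that such a product is proper $\lambda$-biharmonic only when $\lambda<0$, giving the same sign contradiction you reach.

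Your approach bypasses the external classification entirely: you observe that for a surface the traceless operator $\Phi_i$ has eigenvalues $\pm\sigma_i$, so $\trace\Phi_i^3=0$, and you feed this directly into Lemma \ref{th:LemmaRelationFromWeitzenbock} (whose proof indeed uses nothing beyond Gauss--Codazzi--Ricci and Weitzenb\"ock, so remains valid for $m_i=2$ despite the paper's standing assumption $m>2$ in that subsection). The simplification of the coefficient to $4|H^{\varphi_i}|^2+\lambda_i$ is exactly right, since $|\Phi_i|^2/|H^{\varphi_i}|^2=2/r_i^2-\lambda_i-2|H^{\varphi_i}|^2$ from \eqref{eq:PMC2LambdaBiharmonicCHaracterization}. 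This gives a self-contained proof using only the machinery already built in Section 3, which is a real gain; the paper's route has the advantage of identifying the non-pseudo-umbilical alternative explicitly as a Clifford-type torus, but for the statement at hand that extra information is not needed.
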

	\begin{proof}
		Since $M_i$ is $PMC$ and proper $\lambda_i$-biharmonic in $\mathbb S^{n_i} (r_i)$, from the classification of parallel surfaces in Euclidean spheres and Theorem \ref{th:lambdaBiharmonicSubmanifoldsThatLieInHyperspheres}, one can prove that either $M_i$ is pseudo-umbilical in $\mathbb S^{n_i} (r_i)$ and $\left | H^{\varphi_i} \right |^2 = 1 / r_i^2 - \lambda_i / m_i$ or $M_i$ is a product of two circles of different radii which lies in $\mathbb S^3 (r_i) \subset \mathbb S^{n_i} (r_i)$ and $\left | H^{\varphi_i} \right |^2 = x_{2,i}$, $i \in \{ 1, 2\}$. 
		
		Now, using Remark \ref{th:RemarkH1MaxIffH2Max}, both $M_1$ and $M_2$ have to be either pseudo-umbilical or a product of two circles of different radii. In the latter case, according to second case of Proposition \ref{th:ExampleProductSphereCircle} when $m = 2$, both $\lambda_1$ and $\lambda_2$ have to be negative, which is a contradiction, since $\lambda_1 \lambda_2 < 0$.
	\end{proof}
	
	As previously, when $\varphi_1 : M_1^2 \to \mathbb S^{n_1} (r_1)$ and $\varphi_2 : M_2^{m_2} \to \mathbb S^{n_2} (r_2)$, $m_2 > 2$, are two non-minimal and $PMC$ submanifolds such that $M^m = M_1^2 \times M_2^{m_2}$ is proper biharmonic in $\mathbb S^n$, $r_1^2 + r_2^2 = 1$ and $n_1 + n_2 = n - 1$, we have two cases: either both $M_1$ and $M_2$ are pseudo-umbilical or none of them is pseudo-umbilical. The latter case is similar to the case treated in Theorem \ref{th:MainTheorem}, since the surface $M_1$ is a product of two circles.
	
	Summarizing Theorem \ref{th:CharacterizationNormHIs1}, Theorem \ref{th:MainTheorem}, Theorem \ref{th:TheoremTwoSurfaces} and the above remark we can state.
	
	\begin{theorem}\label{th:SummarizeTheorem}
		Let $\varphi_1 : M_1 ^{m_1} \to \mathbb S^{n_1} (r_1)$, $m_1 \geq 2$, and $\varphi_2 : M_2^{m_2} \to \mathbb S^{n_2} (r_2)$, $m_2 \geq 2$, be two non-minimal and $PMC$ submanifolds such that $M^m = M_1^{m_1} \times M_2^{m_2}$ is proper biharmonic in $\mathbb S^n$, $r_1 ^2 + r_2^2 = 1$ and $n_1 + n_2 = n - 1$.
		\begin{enumerate}
			\item If $m_1 > 2$ or $m_2 > 2$, then
			$$
			\left | H^{\iota \circ \varphi} \right | \in \left ( 0, \frac {m-4} m \right ] \cup \{ 1 \}.
			$$
			\item If $m_1 = m_2 = 2$, then $\varphi_1$ and $\varphi_2$ are pseudo-umbilical, that is
			$$
			\left | H^{\iota \circ \varphi} \right | = 1.
			$$
		\end{enumerate}
	\end{theorem}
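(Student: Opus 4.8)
The plan is to treat this as a summarizing statement and organize the conclusions of Theorem \ref{th:CharacterizationNormHIs1}, Theorem \ref{th:MainTheorem} and Theorem \ref{th:TheoremTwoSurfaces} into a single case analysis, with the splitting driven by pseudo-umbilicity. First I would invoke Remark \ref{th:RemarkH1MaxIffH2Max} (a consequence of Proposition \ref{th:H1MaxEquivH2Max}) to reduce to two mutually exclusive situations: either both $\varphi_1$ and $\varphi_2$ are pseudo-umbilical, or neither of them is.

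In the pseudo-umbilical situation the conclusion is immediate in both items. Since $\varphi_1$ and $\varphi_2$ are assumed PMC and are now pseudo-umbilical, Theorem \ref{th:CharacterizationNormHIs1} yields $\left| H^{\iota\circ\varphi} \right| = 1$. This accounts for the value $\{1\}$ appearing in item (1) and for the whole of item (2), provided the other alternative is excluded there.

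In the non-pseudo-umbilical situation I would split according to the dimensions. In item (1) with $m_1 > 2$ and $m_2 > 2$, Theorem \ref{th:MainTheorem} applies directly and gives $\left| H^{\iota\circ\varphi} \right| \in \left( 0, (m-4)/m \right]$. When exactly one factor is a surface, say $m_1 = 2$ and $m_2 > 2$, I would argue as in the remark preceding the statement: a non-minimal, non-pseudo-umbilical PMC surface which is proper $\lambda_1$-biharmonic is, by the classification of PMC surfaces in spheres combined with Theorem \ref{th:lambdaBiharmonicSubmanifoldsThatLieInHyperspheres}, forced to be a product of two circles of distinct radii inside $\mathbb{S}^3(r_1)$. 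Hence $M_1$ again carries the local splitting $L_1 \times \mathbb{S}^1(b_1)$, now with $L_1$ itself a circle, and the computation in the proof of Theorem \ref{th:MainTheorem} — the algebraic system for the quantities $\alpha_k$ and the monotonicity of $\tilde g$ and $\tilde h$ — goes through unchanged, producing the same bound $\left| H^{\iota\circ\varphi} \right| \in \left( 0, (m-4)/m \right]$. Finally, in item (2), where $m_1 = m_2 = 2$, Theorem \ref{th:TheoremTwoSurfaces} shows that the non-pseudo-umbilical alternative cannot occur, so only $\left| H^{\iota\circ\varphi} \right| = 1$ survives.

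The one genuinely non-formal point is the mixed case $m_1 = 2$, $m_2 > 2$: I would have to check that identifying the two-circle factorization of the surface with the block $L_1 \times \mathbb{S}^1$ used in Theorem \ref{th:MainTheorem} is legitimate, i.e. that the geodesic-frame computation of $H^{\iota\circ\varphi}$, the linear independence of the normals $\eta_1, \dots, \eta_4$, and the resulting classification into the eight cases all remain valid when $m_1 - 1 = 1$. Everything else is bookkeeping across the three cited theorems and the dichotomy of Remark \ref{th:RemarkH1MaxIffH2Max}.
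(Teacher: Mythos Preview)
Your proposal is correct and mirrors the paper's own argument exactly: the paper presents Theorem \ref{th:SummarizeTheorem} as a summary of Theorem \ref{th:CharacterizationNormHIs1}, Theorem \ref{th:MainTheorem}, Theorem \ref{th:TheoremTwoSurfaces}, and the remark handling the mixed case $m_1 = 2$, $m_2 > 2$ via the two-circle structure of the surface. Your identification of the pseudo-umbilical dichotomy (Remark \ref{th:RemarkH1MaxIffH2Max}) as the organizing principle and your flagging of the mixed case as the only point requiring a genuine check match precisely what the paper does.
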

	
	We end the paper proposing an open problem that raised from the following. We can prove that, when $M^m = L_1^{m_1-k_1} \times \mathbb S^{k_1} (b_1) \times L_2^{m_2-k_2} \times \mathbb S^{k_2} (b_2)$ is proper biharmonic in $\mathbb S^n$, where $L_1$ is minimal in $\mathbb S^{n_1 - k_1 - 1} (a_1)$ and $L_2$ is minimal in $\mathbb S^{n_2 - k_2 - 1} (a_2)$, we find only two cases and only two admissible values for $r_1$. In both cases $\left | H^{\iota\circ \varphi} \right | \leq (m-4) / m$. However, it is not clear if for any $r_1$ such that 
	$$
	r_1 ^2 \in \left [ \frac {m_1 + \sqrt {m_1 - 1}} {2m}, \frac {2m_1 + m_2 - \sqrt {m_2 - 1}} {2m} \right ]
	$$
	there exists $M$ as in the hypotheses of Theorem \ref{th:MainTheorem}. So we can state the following problem

	\textbf{Open Problem.} Prove that for any $r_1$ such that
	$$
	r_1 ^2 \in \left [ \frac {m_1 + \sqrt {m_1 - 1}} {2m}, \frac {2m_1 + m_2 - \sqrt {m_2 - 1}} {2m} \right ]
	$$
	there exists $M$ as in the hypotheses of Theorem \ref{th:MainTheorem}.	
	
	\medskip \medskip
	\textbf{Acknowledgements.} We are grateful to Cezar Oniciuc for the useful discussions and suggestions.
	
	\bibliographystyle{abbrv}
	\bibliography{Bibliography.bib}
\end{document}